\documentclass[a4paper, 11pt, oneside]{amsart}

\usepackage[hmargin=3cm, vmargin=2.cm]{geometry}

\usepackage[dvipsnames]{xcolor}

\usepackage{amssymb}
\usepackage{amsmath}
\usepackage{graphicx}
\usepackage{mathtools}
\usepackage{amsthm}
\usepackage{amstext}
\usepackage[cmtip,all]{xy}
\usepackage{nicematrix}
\usepackage{enumitem}
\usepackage{mathrsfs}
\usepackage{tikz}
\usepackage{float}
\tikzset{
  vtx/.style={circle, draw, line width=0.6pt, fill=white, minimum size=3mm, inner sep=0pt},
  edg/.style={line width=0.6pt}
}
\usepackage[
  backend=biber,
  style=alphabetic,
  giveninits=true,
  maxbibnames=4,
  doi=false,
  isbn=false,
  url=false
]{biblatex}
\usepackage[colorlinks=true,linkcolor=blue,citecolor=ForestGreen]{hyperref}

\newcommand{\N}{\mathbb{N}}
\newcommand{\A}{{\mathbb{A}}}
\newcommand{\R}{{\mathbb{R}}}
\renewcommand{\S}{{\mathbb{S}}}
\newcommand{\Z}{{\mathbb{Z}}}
\renewcommand{\P}{{\mathbb{P}}}
\newcommand{\SL}{\operatorname{SL}}
\renewcommand{\O}{\operatorname{O}}

\newcommand{\Ccal}{{\mathcal{C}}}
\newcommand{\Gcal}{{\mathcal{G}}}
\newcommand{\Fcal}{\mathcal{F}}
\newcommand{\Vcal}{\mathcal{V}}
\newcommand{\Scal}{\mathcal{S}}

\newcommand{\PGL}{\operatorname{PGL}}

\newtheorem{theorem}{Theorem}[section]
\newtheorem{lemma}[theorem]{Lemma}
\newtheorem{proposition}[theorem]{Proposition}
\theoremstyle{definition}
\newtheorem{definition}[theorem]{Definition}

\newtheorem{corollary}[theorem]{Corollary}
\newtheorem{question}{Question}
\newtheorem{remark}[theorem]{Remark}

\numberwithin{equation}{section}

\begin{document}

\title{Finiteness of integral representations on 2-perfect truncation polytopes}

\author[S. Ko]{Sunghwan Ko}
\address{Department of Mathematical Sciences, Seoul National University, Seoul 08826, South Korea}
\email{sym2346@snu.ac.kr}

\date{\today}

\begin{abstract}
Let $P$ be a compact hyperbolic Coxeter truncation polytope of dimension $d\ge 3$, and let $\Gamma$ be the orbifold fundamental group of the associated Coxeter orbifold $\mathcal{O}_P$. Let $\mathscr{G}(\Gamma,G)$ be the geometric component containing the holonomy representation in $\operatorname{Hom}(\Gamma,G)/G$. $\mathscr{G}(\Gamma,G)$ is identified with the deformation space of properly convex real projective structures on the Coxeter orbifold $\mathcal{O}_P$. We prove that $\mathscr{G}(\Gamma,G)$ contains only finitely many integral representations. The same conclusion holds more generally for irreducible, large, $2$-perfect truncation polytopes.
\end{abstract}

\maketitle

\tableofcontents

\section{Introduction}

Let $M$ be a closed hyperbolic $d$--orbifold and let $\Gamma$ be its orbifold fundamental group. The holonomy representation $\rho_0 : \Gamma \longrightarrow \O^+(d,1)$ of $M$ may be viewed as a representation into $G := \SL^{\pm}(d+1,\R)$ via the standard embedding $\O^+(d,1)\hookrightarrow \SL^{\pm}(d+1,\R)$. Let $\mathscr{G}(\Gamma,G)$ denote the connected component containing $[\rho_0]$ of the locus in $\operatorname{Hom}(\Gamma,G)/G$ consisting of holonomy representations (up to conjugation) of properly convex real projective structures on $M$. We refer to $\mathscr{G}(\Gamma,G)$ as the \emph{geometric component}.

A natural arithmetic question is the following.

\begin{question}\label{q:1}
How many integral representations are there in $\mathscr{G}(\Gamma,G)$?
\end{question}

For $G=\SL^{\pm}(d+1,\R)$, we write $G(\Z)$ for $\SL^{\pm}(d+1,\Z)$. We say that a representation $\rho:\Gamma\to G$ is \emph{integral} if there exists $g\in G$ such that
\begin{equation*}
g\rho(\Gamma)g^{-1}\subset G(\Z).
\end{equation*}

Thus, the question concerns the integral points in a geometric deformation component containing the holonomy representation.

The problem already exhibits subtle behavior in dimension two. When $d=2$ and $G=\SL^{\pm}(3,\R)$, the geometric component $\mathscr{G}(\Gamma,G)$ coincides with the Hitchin component, or equivalently, with the deformation space of convex real projective structures on the underlying $2$--orbifold \cite{Hitchin1992LieGroupsTeichmuller,Goldman1990ConvexRP2,ChoiGoldman1993Closedness,ChoiGoldman05}. In this case, Question~\ref{q:1} asks whether such a natural geometric deformation space can contain infinitely many integral points.

The answer depends strongly on the orbifold. Long--Reid--Thistlethwaite \cite{LongReidThistlethwaite2011SL3Z} showed that, for the turnover triangle orbifold group
\begin{equation*}
\Delta=\langle a,b \mid a^3=b^3=(ab)^4=1\rangle,
\end{equation*}
the geometric component $\mathscr{G}(\Delta,G)$ contains infinitely many integral representations. By contrast, for every compact Coxeter triangle group the corresponding geometric component contains only finitely many integral representations (see \cite{ChoiChoi2015Definability}). This leads naturally to the following question in higher dimensions.

\begin{question}\label{q:2}
Let $P$ be a compact hyperbolic Coxeter $d$--polytope with $d\ge 2$, and let $\Gamma=\pi_1^{\mathrm{orb}}(P)$
be the associated Coxeter orbifold group. Are there only \emph{finitely many} integral representations in the geometric component $\mathscr{G}(\Gamma,G)$ containing the hyperbolic holonomy class?
\end{question}

The geometric component $\mathscr{G}(\Gamma,G)$ is already quite explicit in dimension two. If $P$ is a compact hyperbolic Coxeter $k$--gon with $b$ right angles and $\Gamma$ is the orbifold fundamental group of $P$, then Goldman proved that $\mathscr{G}(\Gamma,G)$ is an open cell of dimension $3k-8-b$ \cite{Goldman1977Affine}. Note that one can have a polygon from $2$--simplex by successively truncating vertices. Since polygons may also be obtained from a simplex by successive vertex truncations, it is natural to consider the higher-dimensional analogue of this construction.

In this paper we therefore focus on \emph{truncation polytopes}, namely Coxeter polytopes obtained from a simplex by successive vertex truncations. This class is a natural one in which to study Question~\ref{q:2} for two reasons. First, their deformation spaces are well understood, both in dimension and in parametrization: for Coxeter truncation polytopes of dimension at least $3$, the geometric component is homeomorphic to an open ball of dimension $e_+-d$, where $e_+$ denotes the number of non-right-angled ridges. This was proved by Choi--Lee--Marquis in dimensions $d \ge 4$ and by Marquis in dimension $3$ \cite{ChoiLeeMarquis22,Marquis10_EspacePolyedres}. Second, truncation polytopes admit a recursive decomposition into simpler Coxeter pieces.

\begin{theorem}\label{thm:1.1}
Let $P$ be a compact hyperbolic Coxeter truncation polytope of dimension $d\ge 3$, and let $M$ be the associated Coxeter $d$--orbifold with $\Gamma:=\pi_1^{\mathrm{orb}}(M)$.
Then the geometric component $\mathscr{G}(\Gamma,\SL^{\pm}(d+1,\R))$ contains only finitely many integral representations.
\end{theorem}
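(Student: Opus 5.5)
We will work with the Vinberg description of representations in $\mathscr{G}(\Gamma,G)$. Every $[\rho]\in\mathscr{G}(\Gamma,G)$ is realized by a reflection group: to each facet $s$ of $P$ we attach a reflection $\rho(s)=\mathrm{Id}-\alpha_s\otimes b_s$ with $\alpha_s(b_s)=2$. The Cartan matrix $A=(a_{ij})$, $a_{ij}=\alpha_i(b_j)$, is determined up to conjugation by a positive diagonal matrix. Its conjugation invariants are the cyclic products $a_{i_1i_2}a_{i_2i_3}\cdots a_{i_ki_1}$. These products are polynomial functions of the traces of products of the generators $\rho(s)$, with rational coefficients, and they determine $[\rho]$. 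The argument then has three steps.

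\textbf{Integrality of the invariants.} If $\rho$ is integral, then every trace $\operatorname{tr}\rho(\gamma)$ is an integer. Expanding $\operatorname{tr}(\rho(s_{i_1})\cdots\rho(s_{i_k}))$ expresses each cyclic product as an integer combination of traces, up to bounded denominators; this is standard for reflection groups, via $\operatorname{tr}$ of products of $\mathrm{Id}-\alpha\otimes b$. So all cyclic products lie in $\frac1N\Z$ for a fixed $N$ depending only on $d$ and the lengths involved. In particular each $a_{ij}a_{ji}=4\cos^2(\pi/m_{ij})$ is already fixed. For $m_{ij}\in\{2,3,4,6,\infty\}$ this product is rational. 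Other $m_{ij}$, such as $5$, would force trace fields beyond $\mathbb{Q}$, and then the integral locus restricted to that ridge is empty or constrained, which only helps.

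\textbf{Boundedness of the invariants.} Next we show that all cyclic products are bounded on the integral locus. Here we use the truncation structure and the Choi--Lee--Marquis / Marquis parametrization. Since $P$ is obtained from a simplex by successive truncations, the Cartan matrix decomposes along the prismatic $(d-1)$-simplices into blocks. Each block is the Cartan matrix of a simplex, or of a truncated simplex glued along an orthogonal-type facet. The free parameters, $e_+-d$ of them, are essentially the cyclic products $\mu_c$ attached to the $2$-cycles and $3$-cycles of non-right-angled ridges in each simplex piece. Integrality then forces each $\mu_c$ and $\mu_c^{-1}$ to lie in $\frac1N\Z_{>0}$, since reversing a cycle gives a product of the same form. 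The reason is that $\mu_c\cdot\mu_{\bar c}=\prod a_{ij}a_{ji}$ is a fixed positive constant, so both $\mu_c$ and $C/\mu_c$ lie in $\frac1N\Z$. Hence $\mu_c$ is confined to a finite set of values.

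\textbf{Finiteness.} Since the cyclic products parametrizing $\mathscr{G}(\Gamma,G)$ take only finitely many values, only finitely many classes $[\rho]$ remain.

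\textbf{Main obstacle.} The key step is to check that the parameters of the Choi--Lee--Marquis ball are genuinely given by cycle products whose reverses have fixed product. One also has to check that the gluing along truncation facets introduces no parameter that escapes this control, for instance a scaling between two pieces that never appears in a closed cycle. I would first treat the simplex case, where Vinberg's cyclic products of length $3$ are the coordinates. Then I would induct on the number of truncations. At each step I would show that the new vertex figure is a prism whose new Cartan entries are forced by right angles and by an equation $\det=0$ on the truncated block. That equation is polynomial with rational coefficients and has finitely many solutions once the other entries are fixed.
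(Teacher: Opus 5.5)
Your integrality step and the divisibility argument (the product $C(A)\,\overline{C}(A)=\prod a_{ij}a_{ji}$ is fixed by the labels, so $C(A)$ takes finitely many values) are exactly the paper's argument for the base cases, Theorems~\ref{thm:5.1} and~\ref{thm:5.4}. The gap is your claim that these cyclic products are \emph{all} the parameters. Your proposed induction on truncations does not close it.

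Coxeter truncation $P\mapsto P^{\dagger v}$ puts right angles on the new facet and adds no parameters (Corollary~\ref{cor:3.8}). But a general labeled truncation polytope need not be right-angled along its ``new'' facets. It is assembled by \emph{gluing} two Coxeter-truncated pieces $\Gcal_1^{\dagger v_1}$ and $\Gcal_2^{\dagger v_2}$ along an essential prismatic circuit $\delta$, and each gluing adds a genuinely new parameter. By Lemma~\ref{lem:4.6}, the fibers of $\mathrm{Cut}_\delta$ are entire $\R$-orbits of the bending action $\Psi_u$, so $\dim\Ccal(\Gcal)=\dim\bigl(\Ccal(\Gcal_1^{\dagger v_1})\boxtimes_\delta\Ccal(\Gcal_2^{\dagger v_2})\bigr)+1$. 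Once both pieces are fixed, a whole line of Cartan matrices remains. Hence no rank condition such as ``$\det=0$'' can leave only finitely many solutions.

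Your worry about a scaling that appears in no closed cycle is not the real problem: the bending is detected by cyclic products. The problem is that the circuits detecting it cross the cut, for example $C=(s_\ell,s_1,\dots,s_j,s_r)$ with $s_\ell\in S_1\setminus\delta$ and $s_r\in S_2\setminus\delta$. The facets $s_\ell$ and $s_r$ are not adjacent, so $\{s_\ell,s_r\}$ is an $\infty$-edge and $a_{s_\ell s_r}a_{s_r s_\ell}$ is \emph{not} fixed by the labeling. Your assertion that $a_{ij}a_{ji}$ is fixed for $m_{ij}=\infty$ is exactly where this slips in.

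Concretely, write $A^u$ for the Cartan matrix of $\Psi_u([P])$. Along a fiber, $C(A^u)=K_1(x_1+e^{(d+1)u}y_1)$ and $\overline{C}(A^u)=K_2(x_2+e^{-(d+1)u}y_2)$ with $x_i,y_i>0$. Their product is unbounded in $u$, so divisibility gives no bound.

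The missing idea is Lemma~\ref{lem:5.8}. As $u\to-\infty$, $C(A^u)$ tends strictly monotonically to the finite limit $K_1x_1$, so it lies in a one-sided punctured neighborhood of that limit containing no integer. Symmetrically, $\overline{C}(A^u)$ avoids $\Z$ for $u\to+\infty$. Strict monotonicity makes $\{u: C(A^u)\in\Z\}$ discrete, so each fiber contains only finitely many integral points.

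You also need the inductive step of Theorem~\ref{thm:5.9} to get finitely many fibers. Restrict the integral representation to the standard subgroups $W_{\Gcal_1}$ and $W_{\Gcal_2}$ of the \emph{untruncated} pieces; the groups $W_{\Gcal_i^{\dagger v_i}}$ are not standard subgroups of $W_\Gcal$. Check that $W_\delta$ must be Lann\'er (Theorem~\ref{thm:5.6}), so that $\Ccal(\Gcal_i)^{\dagger v_i}=\Ccal(\Gcal_i)$. Then induct on the number of essential prismatic circuits.
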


A notable feature of Theorem~\ref{thm:1.1} is that the geometric component may have arbitrarily large dimension, whereas its integral locus remains finite. In particular, the theorem reveals a strong arithmetic finiteness phenomenon inside a geometrically flexible deformation space.

More generally, the proof yields a finiteness theorem for irreducible, large, $2$--perfect truncation polytopes, including noncompact examples. The compact hyperbolic case follows by identifying the deformation space of the Coxeter polytope with the corresponding geometric component of holonomy representations.

The proof uses the recursive structure of truncation polytopes. Cutting along suitable codimension-one pieces reduces the problem to simpler pieces, and the corresponding deformation parameter is described by bending along the cut. The key point is that integrality forces this bending parameter to remain bounded. Combined with the corresponding boundedness for the simpler pieces, this yields finiteness of the integral locus.

Question~\ref{q:2} remains open beyond the class treated in this paper, and the difficulty takes different forms depending on the dimension and on the combinatorial type of $P$. 

In dimension two, every compact hyperbolic Coxeter polygon is a truncation polytope, but the proof of the theorem above does not extend to that setting. Indeed, the parameters for the simpler pieces are not bounded. Hence the splitting--and--gluing procedure used in higher dimensions does not provide a uniform bound for the parameters of the resulting polygon. Thus the two--dimensional case remains open even within the truncation class.

A different issue arises in dimensions $d\ge 3$. For truncation polytopes, the geometric component admits an explicit parametrization, which plays a crucial role in analyzing integrality. However, for compact hyperbolic Coxeter polytopes outside the truncation class, no such parametrization is known in general. This leaves Question~\ref{q:2} open for compact hyperbolic Coxeter polytopes outside the truncation class.

\subsection{Organization of the paper}
Section~2 reviews Vinberg's theory and fixes notation, including cyclic-product coordinates and the integrality criterion. Section~3 describes the deformation spaces of simplices and once truncated simplices. Section~4 studies splitting and gluing along prismatic circuits and parametrizes deformation spaces of truncation polytopes. Section~5 we prove the main theorem by showing that the bending parameter is bounded on each fiber of the integral locus and then applying induction.

\subsection*{Acknowledgements}
I am grateful to my advisor, Gye--Seon Lee, for introducing me to this problem and for many helpful conversations. I would also like to thank Seunghoon Hwang and Donghyun Lee for their helpful comments and suggestions, which greatly improved the readability of this paper.

This work was supported by the National Research Foundation of Korea(NRF) grant funded by the Korea government(MSIT)(No. RS-2023-00252171).

\section{Preliminaries}\label{sec:prelim}

In this section, we recall Vinberg's theory on reflection groups, which will be used throughout the paper.

\subsection{Coxeter polytopes and Vinberg theory}

\begin{definition}\label{def:2.1}
Let $S$ be a finite set. A \emph{Coxeter matrix} on $S$ is a symmetric $S \times S$ matrix 
$M = (M_{st})_{s,t \in S}$ with entries $M_{st} \in \{1, 2, \dots, \infty\}$ such that:
\begin{enumerate}
    \item $M_{ss} = 1$ for all $s \in S$;
    \item $M_{st} \neq 1$ for all distinct $s, t \in S$.
\end{enumerate}
A \emph{Coxeter group} $W_S$ associated with $M$ is defined by the presentation
\begin{equation*}
   W_S = \langle \, S \mid (st)^{M_{st}} = 1 \text{ for all } s,t \in S \text{ with } M_{st}\neq\infty \, \rangle . 
\end{equation*}
The \emph{rank} of $W_S$ is the cardinality $|S|$ of $S$.
\end{definition}

\begin{definition}\label{def:2.2}
Let $W_S$ be a Coxeter group with generating set $S$ and Coxeter matrix $M = (M_{st})_{s,t \in S}$. The \emph{Coxeter diagram} of $W_S$ is the labeled graph defined as follows:
\begin{enumerate}
    \item The set of nodes is $S$.
    \item Two nodes $s,t \in S$ are connected by an edge $\overline{st}$ if and only if $M_{st}>2$.
    \item The edge $\overline{st}$ is labeled by the number $M_{st}$ if $M_{st}>3$.
\end{enumerate}
\end{definition}

Suppose $W_S$ is a Coxeter group, and let $\mathscr{D}_{W_S}$ denote the Coxeter diagram of $W_S$. A Coxeter group $W_S$ is said to be \emph{irreducible} if its diagram $\mathscr{D}_{W_S}$ is connected. By Margulis--Vinberg \cite[Thm~1]{MargulisVinberg00}, every irreducible Coxeter group $W_S$ belongs to one of the following classes:
\begin{enumerate}
    \item \emph{Spherical} if $W_S$ is finite.
    \item \emph{Affine} if $W_S$ is virtually abelian and infinite.
    \item \emph{Large} if there exists a surjection $W_S \twoheadrightarrow F_2$ onto a free group $F_2$ of rank two. 
\end{enumerate}

For each subset $S' \subseteq S$, we can consider the Coxeter group $W_{S'}$ associated with the Coxeter matrix $M'=(M_{st})_{s,t\in S'}$. The group $W_{S'}$ admits a natural injective homomorphism $W_{S'} \rightarrow W_S$, which identifies $W_{S'}$ with the subgroup of $W_S$ generated by $S'$. Such subgroups of $W_{S}$ are called the \emph{standard subgroups} of $W_S$.

Let $\S : \R^{d+1} \setminus \{0\} \to (\R^{d+1} \setminus \{0\}) / \R_{>0}$ be the canonical projection. The image $\S(\R^{d+1} \setminus \{0\})$ is the projective sphere, which we denote by $\S^d$. By a slight abuse of notation, for any subset $W \subset \R^{d+1}$, we write $\S(W)$ to denote the image $\S(W \setminus \{0\})$. The group of linear automorphisms of $\S^d$ is precisely $\SL^{\pm}(d+1, \R)$, the group of linear automorphisms of $\R^{d+1}$ with determinant $\pm 1$.

\begin{definition}\label{def:2.3}
A \emph{projective reflection} $\sigma$ on $\S^d$ is an involution that fixes a hyperplane in $\S^d$ pointwise.
\end{definition}

More concretely, a projective reflection $\sigma$ is induced by a linear map of the form
\begin{equation*}
    \sigma = \mathrm{Id} - \alpha \otimes b,
\end{equation*}
where $\alpha \in (\R^{d+1})^*$, $b \in \R^{d+1}$, and $\alpha(b) = 2$. The projective hyperplane $\S(\ker \alpha)$ is called the \emph{support} of $\sigma$, and the point $[b] \in \S^d$ is called its \emph{pole}.

Let $\pi : \S^d \to \R\P^d$ be the natural projection. An \emph{affine chart} $\A \subset \S^d$ is the preimage of an affine open subset of $\R\P^d$, namely the complement of the projective hyperplane associated to some linear hyperplane $H \subset \R^{d+1}$. A subset $P \subset \S^d$ is \emph{convex} if $P = \S(W)$ for some convex cone $W \subset \R^{d+1}$. A convex subset $P \subset \S^d$ is \emph{properly convex} if its closure $\overline{P}$ is contained in some affine chart $\A \subset \S^d$.

A properly convex subset $P \subset \S^d$ is called a \emph{projective polytope} if it has nonempty interior $\mathring{P}$ and can be written as
\begin{equation*}
    P = \bigcap_{i=1}^N \S \{ v \in \R^{d+1} \mid \alpha_i(v) \leq 0 \},
\end{equation*}
where each $\alpha_i \in (\R^{d+1})^*$. We always assume that such a representation is minimal, in the sense that no half-space $\S \{ v \in \R^{d+1} \mid \alpha_i(v) \leq 0 \}$ contains the intersection of the others. A codimension-one face of $P$ is called a \emph{facet}, and a codimension-two face of $P$ is called a \emph{ridge}. Two facets $s, t$ of $P$ are said to be \emph{adjacent} if their intersection $s \cap t$ is a ridge.

\begin{definition}\label{def:2.4}
    A \emph{Coxeter polytope} is a pair $(P,(\sigma_s=\mathrm{Id}-\alpha_s\otimes b_s)_{s\in S})$ of a projective polytope and a collection of projective reflections such that:
\begin{enumerate}
    \item For each $s \in S$ the support of $\sigma_s$ is the kernel of $\alpha_s$.
    \item For every pair of distinct facets $s,t$ of $P$, 
    \begin{enumerate}
        \item $\alpha_s(b_t)
        $ and $\alpha_t(b_s)$ are both negative, or both zero. 
        \item $\alpha_s(b_t)\alpha_t(b_s)\geq 4$ or $\alpha_s(b_t)\alpha_t(b_s)=4\cos^2\frac{\pi}{m_{st}}$ for some $m_{st} \in \N \setminus \{0,1\}$.
    \end{enumerate}
\end{enumerate}
\end{definition}

We often denote a Coxeter polytope simply by $P$. Let $s,t$ be distinct facets of $P$, and $U$ be the intersection of $\ker(\sigma_s)$ and $\ker(\sigma_t)$. The composition $\sigma_s\sigma_t$ restricts to the identity on $U$. It induces an element ${\sigma_s\sigma_t}\vert_{\R^{d+1}/U}$ of $\SL(2,\R)$, which is conjugate to one of the following matrices by \cite[Proposition~6]{Vinberg71a}:
\begin{enumerate}
    \item
    $\begin{pmatrix}
    \lambda & 0\\
    0 & \lambda^{-1}
    \end{pmatrix}$
    \quad \text{for some } $\lambda>0$,
    if $\alpha_s(b_t)\alpha_t(b_s)>4$.

    \vspace{0.2cm}
    \item
    $\begin{pmatrix}
    1 & 1\\
    0 & 1
    \end{pmatrix}$, if $\alpha_s(b_t)\alpha_t(b_s)=4$.

    \vspace{0.2cm}
    
    \item
    $\begin{pmatrix*}[r]
    \cos\bigl(\tfrac{2\pi}{m_{st}}\bigr) & -\sin\bigl(\tfrac{2\pi}{m_{st}}\bigr)\\[0.2em]
    \sin\bigl(\tfrac{2\pi}{m_{st}}\bigr) & \cos\bigl(\tfrac{2\pi}{m_{st}}\bigr)
    \end{pmatrix*}$, if $\alpha_s(b_t)\alpha_t(b_s)=4\cos^2\!\bigl(\tfrac{\pi}{m_{st}}\bigr)$.
\end{enumerate}

For each pair of adjacent facets $s,t$ of $P$, the \emph{dihedral angle} of the ridge $s\cap t$ is defined to be $\frac{\pi}{m_{st}}$ in case~(3), and 0 otherwise. We define the Coxeter matrix $M$ associated to $P$ by setting $M=(M_{st})_{s,t\in S}$ in case~(3), and $M_{st}=\infty$ in the other cases. We denote the Coxeter group associated to $M$ by  $W_P$.

If $P$ is a Coxeter polytope and $f$ is a proper face of $P$, then we denote $S_f\coloneqq\{s\in S \ \vert f \subset s\}$ and $W_f\coloneqq W_{S_f}$.

\begin{theorem}[\cite{Bourbaki68} and \cite{Vinberg71a}]\label{thm:2.5}
    Let P be a Coxeter polytope in $\S^d$  with the associated Coxeter group $W_P$ and let $\Gamma_P$ be the subgroup of $\operatorname{SL}^{\pm}(d+1,\R)$ generated by the projective reflections $(\sigma_s)_{s\in S}$. Then the following hold:
    \begin{enumerate}
        \item The assignment $\rho(s)=\sigma_s$ defines an isomorphism $\rho : W_P \rightarrow \Gamma_P \subset \SL^{\pm}(d+1,\R)$.
        \item $\gamma(\mathring{P})\cap \mathring{P} = \varnothing$ for $\gamma\in \Gamma_P\setminus e$.
        \item $\Ccal_P\coloneqq\bigcup\{\gamma\cdot P ~\vert~\gamma\in\Gamma_P\}$ is a convex subset of $\S^d$.
        \item The group $\Gamma_P$ is a discrete subgroup of $\SL^{\pm}(d+1,\R)$ acting properly discontinuously on $\Omega_P\coloneqq \mathring{\Ccal_P}$.
        \item An open proper face $f$ of $P$ lies in $\Omega_P$ if and only if the Coxeter group $W_f$ is finite.
    \end{enumerate}
\end{theorem}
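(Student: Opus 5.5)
The statement is Vinberg's theorem (with Bourbaki's contribution for the linear representation). I will follow Vinberg's strategy, which builds the tiling explicitly. The main tool is a Poincaré-polyhedron-type argument, carried out after reducing to the structure of ridges.

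\emph{Rank-two case.} First I treat two facets $s,t$. On $\R^{d+1}/U$, with $U=\ker\alpha_s\cap\ker\alpha_t$, the product $\sigma_s\sigma_t$ is conjugate to one of the three matrices listed before the theorem. When $\alpha_s(b_t)\alpha_t(b_s)=4\cos^2(\pi/m_{st})$, $\sigma_s\sigma_t$ acts as a rotation of order exactly $m_{st}$. The images of the wedge $\{\alpha_s\le 0,\alpha_t\le 0\}$ under the dihedral group $\langle\sigma_s,\sigma_t\rangle$ then tile a neighbourhood of the ridge, which is itself a linear subspace. In the two remaining cases $\sigma_s\sigma_t$ has infinite order, and the images of the wedge tile a convex cone, namely the region bounded by the eigenlines or by the parabolic fixed line. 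In every case $\langle\sigma_s,\sigma_t\rangle$ is dihedral of order $2M_{st}$, and the translates of the wedge have disjoint interiors. The sign condition (2a) is exactly what makes the wedge the correct fundamental chamber.

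\emph{Tiling.} Next I form the abstract space $\widetilde{\mathcal C}=W_P\times P/\!\sim$, where $(w,x)\sim(w',x)$ whenever $w^{-1}w'\in W_{S_x}$ and $S_x$ is the set of facets containing $x$. It carries a developing map $D(w,x)=\rho(w)x$, where $\rho$ is defined on the abstract group $W_P$; the relations $(st)^{M_{st}}$ hold in $\Gamma_P$ by the rank-two case. The key step is to show that $D$ is injective on interiors and that its image is convex. I will argue as Vinberg does. Take two points in distinct chambers and join their images by a segment, perturbed generically so that it avoids faces of codimension at least $2$. By the rank-two analysis the segment crosses chambers through facets in a locally consistent way. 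Each crossing changes $w$ by a generator, and convexity of the chambers shows that the sequence of crossed walls is reduced. This yields both (2) and the faithfulness of $\rho$, which is (1): if $\rho(w)=1$, then $w\mathring P$ meets $\mathring P$, so $w=e$. The same path argument, using that the tiling is locally convex near every ridge, shows that $\Ccal_P$ is convex, giving (3). Local convexity at ridges, together with a Tietze-type argument that local convexity implies convexity, is the main obstacle. It needs the rank-two local picture to extend to codimension-two faces, and a careful induction for higher-codimension faces: one shows that for $f$ with $W_f$ finite, the stabilizer tiles a neighbourhood of $f$.

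\emph{Discreteness and faces.} Given (2), a neighbourhood of an interior point of $P$ meets only finitely many translates, so $\Gamma_P$ is discrete. Properness on $\Omega_P$ then follows from the local finiteness of the tiling at points of $\Omega_P$, which is (4). For (5), suppose first that $W_f$ is finite. Then the finitely many translates $\rho(w)P$ with $w\in W_f$ cover a neighbourhood of the open face $f$, by induction on codimension starting from the rank-two case, so $f\subset\Omega_P$. Conversely, suppose $W_f$ is infinite. Then infinitely many chambers accumulate at $f$, which contradicts proper discontinuity at interior points of $\Omega_P$. Hence $f\subset\partial\Ccal_P$.
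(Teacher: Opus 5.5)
The paper gives no proof of this statement; it is quoted from Bourbaki and Vinberg. Your rank-two analysis and the basic construction $W_P\times P/\!\sim$ with developing map $D$ are the right ingredients. However, the step that carries all the content is not established: injectivity of $D$ on open chambers and convexity of its image, i.e.\ (1)--(3).

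\textbf{The segment argument is circular.} To say that the segment from $D(p)$ to $D(q)$ crosses chambers through facets, you must already know that it stays in $\Ccal_P$, which is (3). Moreover, $D$ is at best a local homeomorphism on the finite-stabilizer part of $W_P\times P/\!\sim$, not a covering, so the segment need not lift at all. If it does lift from $p$, it ends at \emph{some} preimage of $D(q)$, and that preimage is $q$ only if $D$ is injective. Reducedness of the word read off along the lift does not address this.

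\textbf{The Tietze-type fallback does not close the gap either.} Tietze's theorem needs local convexity at every point of a closed set. At points where $D$ is a local homeomorphism onto an open set this is vacuous (an open annulus is locally convex at each of its points), so all the content sits at points of $\partial\Omega_P$. Your local analysis covers ridges (including those with $M_{st}=\infty$) and faces with $W_f$ finite. It does not cover faces of codimension $\ge 3$ with $W_f$ infinite, for instance a vertex with Lann\'er or $\widetilde A_{d-1}$ link. There infinitely many chambers meet, the tiling is not locally finite, $D$ is not a local homeomorphism, and local convexity amounts to statement (3) for the link of $f$. Your route would need an induction on dimension plus a completeness input for the closedness half of Tietze's argument, and neither is in the proposal.

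\textbf{The standard proof avoids this topology.} It is Tits' argument in Bourbaki, which carries over to Vinberg's setting because it uses only rank-two information. It rests on Tits' lemma: if $w\in W_P$, $s\in S$ and $\ell(sw)>\ell(w)$, then $\rho(w)\mathring P\subset\S\{\alpha_s<0\}$. The proof is by induction on $\ell(w)$:
\begin{itemize}
\item choose $t$ with $\ell(tw)<\ell(w)$;
\item write $w=vw'$ with $v\in W_{\{s,t\}}$, $\ell(w)=\ell(v)+\ell(w')$ and $\ell(w')$ minimal;
\item by induction $\rho(w')\mathring P\subset\S\{\alpha_s<0,\ \alpha_t<0\}$, and your planar picture on $\R^{d+1}/U$ finishes the step.
\end{itemize}
Applied to $sw$, the lemma gives $\rho(w)\mathring P\subset\S\{\alpha_s>0\}$ whenever $\ell(sw)<\ell(w)$. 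This yields (2), hence (1). Then (3) follows by a similar induction on $\ell(w)$ for a segment from $P$ to $\rho(w)P$.

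\textbf{Your treatment of (4)--(5) is also circular.} ``Local finiteness of the tiling at points of $\Omega_P$'' is equivalent to finiteness of the stabilizers of points of $\Omega_P$. That is the ``only if'' half of (5), which you deduce from (4). An independent argument: for $x\in\Omega_P$, pick $y\in\mathring P$ and $z\in\rho(w)P$ with $x$ in the open segment $(y,z)$. The mirror of every reflection fixing $x$ passes through $x$ but not through $y$, so it separates $y$ from $z$. By Tits' lemma exactly $\ell(w)$ reflections separate $P$ from $\rho(w)P$. Hence the stabilizer of $x$, a conjugate of some $W_f$, contains only finitely many reflections and is therefore finite. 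This gives local finiteness, and (4) follows.
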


The group $\Gamma_P$ is called the \emph{projective Coxeter group} of $P$. Theorem~\ref{thm:2.5} tells us that the quotient $\Omega_P /\Gamma_P$ is a convex real projective Coxeter orbifold and $\rho$ is the holonomy representation of $\Omega_P/\Gamma_P$. The holonomy representation $\rho$ is called the \emph{Vinberg representation}.

Let $P$ be a Coxeter polytope of dimension $d$. We say that $P$ is \emph{perfect} if the stabilizer $W_v$ is finite for every vertex $v$ of $P$ and \emph{$2$-perfect} if the stabilizer $W_f$ is finite for each edge $f$ of $P$.

We also express $2$-perfectness of a Coxeter polytope $P$ in terms of the \emph{links} of the vertices of $P$. For any vertex $v$ of $P$, observe that the vector space $\langle v \rangle$ generated by $v$ is stabilized by every reflection $\sigma_s$ with $s \in S_v$. It follows that each $\sigma_s$ induces a reflection on $\S(\R^{d+1}/\langle v\rangle)$. We define the link $P_v$ of $P$ at $v$ by
\begin{equation*}
    P_v = \bigcap_{s \in S_v} \, \mathbb{S}\bigl\{ x \in \R^{d+1}/\langle v \rangle \;\big|\; \alpha_s(x) \leq 0 \bigr\}.
\end{equation*}

By the definition of the link $P_v$, every vertex in $P_v$ corresponds to an edge of $P$ containing $v$. Therefore $P$ is $2$--perfect if and only if each link $P_v$ is perfect.

\begin{definition}\label{def:2.6}
    Let $S$ be a finite set. A \emph{Cartan matrix} $A_S=(A_{ij})_{i,j \in S}$ on $S$ is a $\vert S\vert\times \vert S\vert$ matrix satisfying the following conditions:
    \begin{enumerate}
        \item $A_{ii}=2$ for all $i\in S$, and $A_{ij}\leq 0$ for $i\neq j$. 
        \item $A_{ij}=0$ if and only if $A_{ji}=0$.
        \item For $i\neq j$, either $A_{ij}A_{ji}\geq 4$, or $A_{ij}A_{ji}=4\cos^2(\pi/m_{ij})$ for some $m_{ij}\in \{3,4,\ldots \}$.
    \end{enumerate}
\end{definition}

A Cartan matrix $A$ is said to be \emph{reducible} if, after reordering its indices, it can be written in block diagonal form; otherwise, $A$ is called \emph{irreducible}. For an irreducible Cartan matrix $A$, we can write $A=2\mathrm{Id}-M$ for some matrix $M$ with nonnegative entries. Therefore the Perron--Frobenius theorem ensures the existence of the smallest modulus eigenvalue $\lambda_A$. 

An irreducible Cartan matrix $A$ is classified into one of three types: \emph{positive}, \emph{zero}, or \emph{negative type} according to the sign of the eigenvalue $\lambda_A$. Every Cartan matrix $A$ decomposes into the direct sum of its irreducible components, each of which is of positive, zero, or negative type. Rearranging indices, we denote by $A^+$, $A^0$, and $A^-$ the connected components of positive, zero, and negative type, respectively.

For a Coxeter polytope $(P,(\sigma_s=\mathrm{Id}-\alpha_s\otimes b_s)_{s\in S})$ in $\S^d$,
we define a $(\R_+)^S$-action on the tuples $(\alpha_s,b_s)_{s\in S}$ in $(V^*\times V)^{S}$
\begin{equation*}
(\lambda_s)_{s\in S}\cdot (\alpha_s,b_s)_{s\in S} \mapsto (\lambda_s\alpha_s,\lambda_s^{-1}b_s)_{s\in S}.
\end{equation*}
The corresponding Vinberg representation is determined only up to this action. This action induces the usual diagonal action on Cartan matrices. Accordingly, we define an equivalence relation on Cartan matrices by
\begin{equation*}
A\sim A' \quad \text{if and only if} \quad A'=DAD^{-1}
\end{equation*}
for some positive diagonal matrix $D$.

Given a Coxeter polytope $(P,(\sigma_s=\mathrm{Id}-\alpha_s\otimes b_s)_{s\in S})$, the matrix $A_P\coloneqq(\alpha_s(b_t))_{s,t\in S}$ satisfies the conditions of Definition~\ref{def:2.6}. We call $A_P$ the Cartan matrix of $P$. Then the equivalence class $[A_P]$ is independent of the choice of defining reflections of $P$. Any representative of this class will be called a Cartan matrix of $P$. A Cartan matrix $A_P$ is irreducible if and only if the Coxeter group $W_P$ is irreducible. In this case, we say that $P$ is \emph{irreducible}.

\begin{definition}\label{def:2.7}
Let $P$ be a Coxeter polytope in $\mathbb{S}^d$ and $A_P$ be the associated Cartan matrix.  
We say that $P$ is \emph{elliptic} if $A_P = A_P^+$, 
\emph{parabolic} if $A_P = A_P^0$ and $\operatorname{rank}(A_P) = d$,  
and \emph{loxodromic} if $A_P = A_P^-$ and $\operatorname{rank}(A_P) = d+1$.
\end{definition}

The following theorem shows that a Cartan matrix $A$ satisfying a certain condition determines a unique irreducible Coxeter polytope.

\begin{theorem}[\cite{Vinberg71a}]\label{thm:2.8}
    Let $A$ be a Cartan matrix of size $N \times N$. Suppose that $A$ is irreducible, of negative type, and of rank $d+1$. Then there exists a unique Coxeter $d$-polytope $P$, up to automorphism of $\mathbb{S}^d$, such that $A = A_P$.
\end{theorem}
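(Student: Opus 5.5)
The plan is to realize $A$ as a product of a matrix of linear forms and a matrix of vectors, and to read off both existence and uniqueness from this factorization. Since $\operatorname{rank}(A)=d+1$, there is a rank factorization $A = XY$, where $X$ is an $N\times(d+1)$ matrix and $Y$ is a $(d+1)\times N$ matrix. Let $\alpha_s\in(\R^{d+1})^*$ be the rows of $X$ and $b_t\in\R^{d+1}$ the columns of $Y$, so that $\alpha_s(b_t)=A_{st}$. Because $X$ and $Y$ both have rank $d+1$, the forms $(\alpha_s)$ span $(\R^{d+1})^*$ and the vectors $(b_t)$ span $\R^{d+1}$. I then set
\begin{equation*}
P=\bigcap_{s}\S\{v \mid \alpha_s(v)\le 0\}, \qquad \sigma_s=\mathrm{Id}-\alpha_s\otimes b_s .
\end{equation*}
Since $A_{ss}=2$, each $\sigma_s$ is a projective reflection. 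Conditions (1) and (2) of Definition~\ref{def:2.4} are then exactly conditions (1)--(3) of Definition~\ref{def:2.6}. So the content is showing that $P$ is a genuine properly convex polytope whose facets are in bijection with $S$.

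\emph{Nonempty interior.} Because $A$ is irreducible of negative type, Perron--Frobenius applied to $A=2\mathrm{Id}-M$ gives a vector $x$ with positive entries and $Ax=\lambda_A x<0$ componentwise. Put $v=\sum_t x_t b_t$. Then $\alpha_s(v)=(Ax)_s<0$ for every $s$, so $[v]\in\mathring P$.

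\emph{Proper convexity.} The $\alpha_s$ span the dual space, so the cone $\{\alpha_s\le 0 \ \forall s\}$ contains no line, i.e.\ it is pointed. A pointed polyhedral cone has closure contained in an affine chart. Concretely, applying the same Perron--Frobenius argument to $A^{T}$ gives a positive $y$ with $y^{T}A<0$. The form $\phi=\sum_s y_s\alpha_s$ is then negative on the whole closed cone except at $0$, and $\{\phi<0\}$ is the required affine chart.

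\emph{Minimality, so that each $s$ gives a facet.} This is the step I expect to need the most care. Suppose the half-space for $s$ were redundant. By Farkas' lemma there would be $c_t\ge 0$ with $\alpha_s=\sum_{t\ne s}c_t\alpha_t$. Evaluating at $b_s$ gives
\begin{equation*}
2=\alpha_s(b_s)=\sum_{t\ne s}c_t A_{ts}\le 0,
\end{equation*}
since $A_{ts}\le 0$ for $t\ne s$. This is a contradiction. Distinct indices also give distinct facets: if $\alpha_s=\lambda\alpha_t$ with $\lambda>0$ and $s\ne t$, then $2=\alpha_s(b_s)=\lambda A_{ts}\le 0$, again a contradiction. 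Hence $(P,(\sigma_s))$ is a Coxeter polytope with $A_P=A$.

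\emph{Uniqueness.} Let $P'$ be another Coxeter polytope with $A_{P'}\sim A$. After the $(\R_+)^S$-rescaling of the pairs $(\alpha'_s,b'_s)$, we may assume $A_{P'}=A$ exactly. Then $A=X'Y'$ is a second rank factorization. Rank factorizations are unique up to $\mathrm{GL}(d+1,\R)$: there is $g$ with $X'=Xg^{-1}$ and $Y'=gY$. Indeed, the row spaces of $X$ and $X'$ both equal the column space of $A$, which determines $g$, and $Y'=gY$ then follows from the full rank of $X$. Hence $P'=g\cdot P$ and $\sigma'_s=g\sigma_s g^{-1}$. Rescaling $g$ by a positive scalar puts it in $\SL^{\pm}(d+1,\R)$ without changing its action on $\S^d$. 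This gives the uniqueness up to automorphism.
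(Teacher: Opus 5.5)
Your proof is correct. The paper gives no argument for this statement and simply quotes it from Vinberg. Your proof is a sound, self-contained version of Vinberg's standard construction:
\begin{itemize}
\item a rank factorization $A=XY$;
\item an interior point $Yx$ coming from the Perron vector $x$;
\item irredundancy of each half-space, because $\alpha_s(b_s)=2$ is incompatible with $A_{ts}\le 0$ for $t\neq s$;
\item uniqueness of rank factorizations up to $\mathrm{GL}(d+1,\R)$, followed by rescaling into $\SL^{\pm}(d+1,\R)$.
\end{itemize}

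Two small corrections.

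\emph{Uniqueness step.} It is the \emph{column} spaces of $X$ and $X'$ in $\R^N$ that both equal the column space of $A$, not the row spaces; the row space of each is all of $(\R^{d+1})^*$. The equality holds because $Y$ and $Y'$ are surjective. This is automatic, since the inner dimension $d+1$ equals $\operatorname{rank}A$, so $X'$ and $Y'$ are forced to have full rank. Since $X$ and $X'$ are then injective with the same image, you get $X'=Xh$ for a unique invertible $h$.

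\emph{Proper convexity.} You do not need $y^{T}A<0$. Any $y$ with positive entries works. On the cone $\{\alpha_s\le 0\ \forall s\}$, the form $\sum_s y_s\alpha_s$ is $\le 0$, and it vanishes only where every $\alpha_s$ vanishes. That happens only at $0$, because the $\alpha_s$ span the dual space.

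Also, condition (1) of Definition~\ref{def:2.4} holds by construction of $\sigma_s$, not because of the Cartan matrix axioms.
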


\subsection{Deformation spaces of labeled polytopes}

Let $P$ be a projective polytope in $\S^d$. The face poset $\Fcal(P)$ of $P$ is the partially ordered set of all faces of $P$, ordered by inclusion. Two projective $d$-polytopes $P$ and $P'$ are said to be \emph{combinatorially equivalent} if there exists a bijection $\phi : \Fcal(P) \to \Fcal(P')$ preserving inclusion relations. A \emph{combinatorial polytope} is a combinatorial equivalence class of a projective polytope.

Let $\Gcal$ be a combinatorial polytope. Assigning an element $m \in \{2,3,\ldots,\infty\}$ to each ridge of $\Gcal$ determines a function from the set of ridges to $\{\pi/m \ \vert \ m=2,3,\ldots,\infty\}$. We call this assignment a \emph{ridge labeling}. A \emph{labeled polytope} is a combinatorial polytope together with a ridge labeling.

Let $\Gcal$ be a labeled polytope in $\S^d$ . A \emph{Coxeter polytope realizing} $\Gcal$ is a pair $(P,\phi)$, where $P$ is a Coxeter polytope  and  $\phi$ is a poset isomorphism from the face poset of $P$ to that of $\Gcal$, such that for each ridge $r$ of $\Gcal$, the label of $r$ agrees with the dihedral angle of the corresponding ridge $\phi^{-1}(r)$ of $P$. Two Coxeter polytopes $(P,\phi)$ and $(P',\phi')$ realizing $\Gcal$ are isomorphic if there exists an automorphism $\psi$ of $\S^d$ such that $\psi(P)=P'$ and $\hat{\psi}\circ \phi=\phi'$, where $\hat{\psi}$ is the poset isomorphism between $\Fcal(P)$ and $\Fcal(P')$ induced by $\psi$. 
 
We often omit $\phi$ and simply write $P$ for the pair $(P,\phi)$. Its isomorphism class in $\Ccal(\Gcal)$ is denoted by $[P]$. 

\begin{definition}\label{def:2.9}
    Let $\Gcal$ be a labeled polytope. The \emph{deformation space} $\Ccal(\Gcal)$ of $\Gcal$ is the space of isomorphism classes of Coxeter polytopes realizing $\Gcal$.
\end{definition}

Suppose that $\Gcal$ is a labeled $d$-polytope, and let $P$ be a Coxeter polytope realizing~$\Gcal$. Let $\psi \in \PGL(d+1,\R)$ be a projective automorphism of $\S^d$, and choose a representative
$g\in\SL^\pm(d+1,\R)$ of $\psi$. Then $\psi$ acts on $P$ through the natural action of $\PGL(d+1,\R)$ on $\S^d$, so that
\begin{equation*}
    \psi(P)=g(P).
\end{equation*}

If $(\sigma_s)_{s\in S}$ denotes the reflections associated with the facets of $P$, then the induced action on the corresponding realization is given by conjugation:
\begin{equation*}
    g\cdot (P,(\sigma_s)_{s\in S}):= \bigl(g(P),(g\sigma_s g^{-1})_{s\in S}\bigr).
\end{equation*}

Therefore, the deformation space $\Ccal(\Gcal)$ is defined as the space of Coxeter polytopes realizing $\Gcal$, modulo the conjugation action of $\SL^\pm(d+1,\R)$.

\begin{remark}\label{rem:2.10}
Let $\Gcal$ be a labeled polytope, and let $M$ be the associated Coxeter orbifold. Then the deformation space $\Ccal(\Gcal)$ may be regarded as the deformation space of convex real projective structures on $M$.
\end{remark}

\begin{remark}[Compact Coxeter case]\label{rem:2.11}
Assume that $P$ is a compact hyperbolic Coxeter polytope realizing $\Gcal$, and let $M$ be the associated Coxeter orbifold. Set $\Gamma=\pi_1^{\mathrm{orb}}(M)\cong W_\Gcal$. By the openness theorem of Koszul \cite{Koszul68_DeformationsConnexionsLocalementPlates} and the closedness theorem of Benoist \cite{Benoist05_ConvexesDivisiblesIII}, the subset of $\operatorname{Hom}(\Gamma,G)/G$ consisting of holonomy classes of properly convex real projective structures on $M$ is a union of connected components. Hence the geometric component $\mathscr{G}(\Gamma,G)$ containing the hyperbolic holonomy class $[\rho_0]$ can be identified, via the holonomy map, with the connected component $\Ccal_0(\Gcal)\subset \Ccal(\Gcal)$ containing the hyperbolic point. We will therefore identify $\mathscr{G}(\Gamma,G)$ with $\Ccal_0(\Gcal)$ throughout the paper.
\end{remark}

Let $\Gcal$ be a labeled polytope and $P$ be a Coxeter polytope realizing $\Gcal$. We next define several notions for $\Gcal$ by saying that $\Gcal$ satisfies a given property if a Coxeter polytope realizing $\Gcal$ satisfies it. For example, a labeled polytope $\Gcal$ is irreducible if a Coxeter polytope realizing $\Gcal$ is irreducible.
 
We recall cyclic products, which will be used to parametrize Coxeter polytopes realizing a given labeled polytope $\Gcal$.

Let $A = (a_{ij})$ be an $N \times N$ Cartan matrix, and let $S = \{1, 2, \dots, N\}.$
A $k$-tuple of elements in $S$ is called a \emph{$k$-circuit} of $S$. For any $k$-circuit $\Ccal = (i_1, i_2, \dots, i_k)$ in $S$, we define the \emph{cyclic product} of $A$ associated with $\Ccal$ as

\begin{equation*}
    \Ccal(A) = a_{i_1 i_2} a_{i_2 i_3} \ldots a_{i_k i_1}.
\end{equation*}

\begin{proposition}[\cite{Vinberg71a}, Prop.~16]\label{prop:2.12}
     Let $A$ and $A'$ be two Cartan matrices. Then $A$ and $A'$ are equivalent if and only if all their cyclic products are equal.
\end{proposition}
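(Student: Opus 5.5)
The forward implication is a direct computation. For the converse, I will build the diagonal matrix $D$ explicitly along a spanning forest of the graph of nonzero entries, and then use the cyclic products to check the remaining entries.

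\emph{Forward direction.} Suppose $A'=DAD^{-1}$ with $D=\mathrm{diag}(d_1,\dots,d_N)$ and all $d_i>0$. Then $a'_{ij}=d_i a_{ij} d_j^{-1}$. For any circuit $(i_1,\dots,i_k)$ the factors $d_{i_l}$ telescope around the cycle, so
\begin{equation*}
\Ccal(A')=a'_{i_1i_2}\cdots a'_{i_ki_1}=a_{i_1i_2}\cdots a_{i_ki_1}=\Ccal(A).
\end{equation*}

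\emph{Converse: the zero patterns agree.} Assume all cyclic products of $A$ and $A'$ coincide. Using $2$-circuits $(i,j)$ gives $a_{ij}a_{ji}=a'_{ij}a'_{ji}$. By condition (2) of Definition~\ref{def:2.6}, $a_{ij}=0$ if and only if $a_{ji}=0$, and likewise for $A'$. Hence $a_{ij}=0$ if and only if $a'_{ij}=0$. Let $\Gamma_A$ be the graph on $S$ with an edge $\{i,j\}$ whenever $i\neq j$ and $a_{ij}\neq 0$; by the above, $A'$ determines the same graph.

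\emph{Constructing $D$ along a spanning forest.} Choose a spanning tree $T$ in each connected component of $\Gamma_A$, with a root $r$, and set $d_r=1$. Moving outward from the root along tree edges $\{i,j\}$, with $i$ closer to $r$, define
\begin{equation*}
d_j=d_i\,a_{ij}/a'_{ij}.
\end{equation*}
This is positive because off-diagonal nonzero entries of Cartan matrices are negative. By construction $a'_{ij}=d_ia_{ij}d_j^{-1}$ holds for tree edges in the outward direction. For the reverse direction, the $2$-circuit identity $a_{ij}a_{ji}=a'_{ij}a'_{ji}$ gives $a'_{ji}=d_ja_{ji}d_i^{-1}$. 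Diagonal entries are both $2$, and zero entries agree, so they pose no issue.

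\emph{Checking non-tree edges.} The only remaining point, and the one needing an argument, is a non-tree edge $\{i,j\}$. Take the unique tree path $j=k_0,k_1,\dots,k_m=i$, and consider the circuit $(j,k_1,\dots,k_{m-1},i)$ closed by the entry $a_{ij}$. Along the tree edges the relation $a'_{kl}=d_ka_{kl}d_l^{-1}$ is already established in both directions. Therefore
\begin{equation*}
\prod_{l} a'_{k_lk_{l+1}}=\frac{d_j}{d_i}\prod_l a_{k_lk_{l+1}},
\end{equation*}
and this product is nonzero. Equating the cyclic products of $A$ and $A'$ for this circuit and cancelling yields $a'_{ij}=d_ia_{ij}d_j^{-1}$.

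\emph{Conclusion.} Distinct connected components of $\Gamma_A$ impose no constraints on one another, since all entries between them vanish. Hence $A'=DAD^{-1}$, so $A$ and $A'$ are equivalent.
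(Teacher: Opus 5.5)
Your proof is correct. The $2$-circuits force identical zero patterns, the spanning-forest construction gives a well-defined positive $D$ (using negativity of nonzero off-diagonal entries), and the fundamental cycle through each non-tree edge forces $a'_{ij}=d_ia_{ij}d_j^{-1}$ there as well. The paper gives no proof of this proposition and only cites Vinberg's Proposition~16; your spanning-tree argument is the standard proof of that result, so it is essentially the same approach.
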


\begin{theorem}\label{thm:2.13}
    Let $\Gcal$ be a labeled $d$-polytope, and let $\Ccal(\Gcal)$ be its deformation space. Then the following are equivalent:
    \begin{itemize}
        \item $[P]=[P']$ in $\Ccal(\Gcal)$;
        \item the Cartan matrices $A_P$ and $A_{P'}$ are equivalent;
        \item all cyclic products of $A_P$ and $A_{P'}$ are equal.
    \end{itemize}
\end{theorem}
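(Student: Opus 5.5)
The plan is to prove the three equivalences in a cycle, with each link coming from an earlier result. The second and third conditions are equivalent by Proposition~\ref{prop:2.12}. So the real content is the equivalence between the first condition (isomorphism in $\Ccal(\Gcal)$) and the second (equivalence of Cartan matrices).

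\emph{First $\Rightarrow$ second.} Suppose $[P]=[P']$, so there is $g\in\SL^{\pm}(d+1,\R)$ with $g(P)=P'$ compatibly with the labelings $\phi,\phi'$.
\begin{itemize}
\item Since $\hat\psi\circ\phi=\phi'$, the induced bijection of facets is the identity on $S$.
\item The reflections satisfy $\sigma'_s=g\sigma_s g^{-1}$, so $\sigma'_s=\mathrm{Id}-(\alpha_s\circ g^{-1})\otimes g b_s$.
\item Both $(\alpha_s\circ g^{-1},gb_s)$ and $(\alpha'_s,b'_s)$ define the same reflection with the same support. Hence they differ by a scaling $(\lambda_s\alpha,\lambda_s^{-1}b)$.
\item The orientation of the half-space $\alpha_s\le 0$ forces $\lambda_s>0$, using that $g$ maps $P$ to $P'$ facet-wise.
\item Therefore $\alpha'_s(b'_t)=\lambda_s\lambda_t^{-1}\alpha_s(b_t)$, i.e.\ $A_{P'}=DA_PD^{-1}$ with $D$ positive diagonal.
\end{itemize}

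\emph{Second $\Rightarrow$ first.} Suppose $A_{P'}=DA_PD^{-1}$. After rescaling the pairs $(\alpha'_s,b'_s)$ by $D^{-1}$, we may assume $A_{P'}=A_P$ exactly.
\begin{itemize}
\item If $A_P$ is irreducible, of negative type and of rank $d+1$, the uniqueness part of Theorem~\ref{thm:2.8} gives an automorphism of $\S^d$ taking $P$ to $P'$. It respects facet indices, hence labelings, so $[P]=[P']$.
\item In general, when those hypotheses fail, I will argue directly. Since the $\alpha_s$ define a properly convex polytope with nonempty interior, they span $(\R^{d+1})^*$. Choose $d+1$ indices $s_0,\dots,s_d$ with $\alpha_{s_i}$ a basis, and let $\alpha'_{s_i}$ be the corresponding forms for $P'$.
\item The goal is to show that the $\alpha'_{s_i}$ also form a basis, and that $g$, defined by $\alpha'_{s_i}\circ g=\alpha_{s_i}$, carries every $\alpha_s$ to $\alpha'_s$ and every $b_s$ to $b'_s$.
\end{itemize}

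The main obstacle is this general second $\Rightarrow$ first direction. The Cartan matrix records only the pairings $\alpha_s(b_t)$, so it determines the pair $\bigl((\alpha_s),(b_t)\bigr)$ up to $\mathrm{GL}$ only when the $b$'s span $\R^{d+1}$ and the $\alpha$'s span the dual, with both families in general position. This requires $A$ to have rank $d+1$. In the degenerate cases (rank $<d+1$, e.g.\ elliptic or parabolic pieces) the argument needs extra care: the $b_s$ may span a proper subspace, and one must use that $\Ccal(\Gcal)$ is taken modulo the full $\SL^{\pm}$-action together with the combinatorics of $\Gcal$. For the polytopes treated in this paper (irreducible, large, hence loxodromic of full rank), I would restrict to the rank-$(d+1)$ case and invoke Theorem~\ref{thm:2.8}.
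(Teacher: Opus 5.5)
The paper states this theorem without proof. It treats it as a consequence of Vinberg's results: Proposition~\ref{prop:2.12} for the last two conditions, and the uniqueness in Theorem~\ref{thm:2.8} for recovering the polytope from its Cartan matrix. Your proposal follows exactly this route, and what you actually prove is correct. The passage from $[P]=[P']$ to $A_{P'}=DA_PD^{-1}$ is right; it uses that isomorphisms in $\Ccal(\Gcal)$ conjugate the reflections and not only the polytopes, which is how the paper sets up $\Ccal(\Gcal)$. The converse is also right when $\operatorname{rank}A_P=d+1$, and there you need neither irreducibility nor negative type. Full rank forces the $\alpha_s$ to span the dual space and the $b_s$ to span $\R^{d+1}$, and likewise for $P'$. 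Hence $\sum_s c_sb_s=0$ iff $A_Pc=0$ iff $\sum_s c_sb'_s=0$, so $g(b_s):=b'_s$ is a well-defined linear isomorphism with $\alpha'_s\circ g=\alpha_s$ for every $s$. This also justifies your unproved claim that the automorphism respects facet indices, which Theorem~\ref{thm:2.8} as quoted does not say.

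The case you leave open cannot be closed by ``extra care'': without a rank hypothesis, equivalence of Cartan matrices does not imply $[P]=[P']$. Suppose $A_P=A$ exactly. The map $x\mapsto(\alpha_s(x))_{s\in S}$ identifies $\R^{d+1}$ with a $(d+1)$-dimensional subspace $L\subset\R^S$ containing $\operatorname{Im}A$; it sends $b_t$ to $Ae_t$ and $P$ to $\S(L\cap\R^S_{\le0})$. Conversely, any such $L$ meeting the open negative orthant gives a realization with Cartan matrix exactly $A$. If $A$ is irreducible, the only positive diagonal $D$ with $DAD^{-1}=A$ are scalars, so two such realizations give the same point of $\Ccal(\Gcal)$ only if $L=L'$. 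In particular, the linear relations among the $\alpha_s$, which your map defined by $\alpha'_{s_i}\circ g=\alpha_{s_i}$ would have to respect, are recorded by $L$ and not by $A$.

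Here is a concrete counterexample. Let $A$ be the symmetric Cartan matrix of a right-angled hyperbolic pentagon; it is $5\times5$, irreducible, and of rank $3$. Take $L=\ker\ell$ with $\ell A=0$; such $\ell$ form a $2$-dimensional space. On an open arc of $\ell$ with two entries of one sign and three of the other (such arcs exist), $\S(L\cap\R^5_{\le0})$ is a triangular prism in $\S^3$ with constant combinatorics and labels. This labeled prism therefore has a one-parameter family of pairwise non-isomorphic realizations with the same Cartan matrix. So the statement genuinely needs $\operatorname{rank}A_P=d+1$ (e.g.\ $P$ loxodromic), and you should impose this as a hypothesis rather than infer it from irreducibility and largeness. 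Those two conditions alone do not suffice. The Cartan matrix of a hyperbolic quadrilateral with angles $\pi/3$ is realized by the negative orthant of $\R^4$, a tetrahedron in $\S^3$ with two ridges of angle $0$, and it has rank $3$. So the last sentence of Corollary~\ref{cor:2.15} also needs an extra hypothesis.
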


The theorem shows that the deformation space $\Ccal(\Gcal)$ of a labeled polytope $\Gcal$ can be described in terms of cyclic products of Cartan matrices.

A $k$--circuit $C$ in the generating set is said to be \emph{relevant} if one of the following holds:
\begin{itemize}
    \item[(i)] $k\ge 3$ and $C$ corresponds to a cycle in the underlying graph of the Coxeter diagram $\mathscr{D}_W$;
    \item[(ii)] $k=2$ and $C$ corresponds to an edge labeled by $\infty$ in $\mathscr{D}_W$.
\end{itemize}

Let $A$ be a Cartan matrix associated with a Coxeter polytope $P$ realizing the labeled polytope $\Gcal$. Then the cyclic product $C(A)$ associated with any non--relevant circuit is automatically determined:

\begin{itemize}
    \item if $k=1$, then $C(A)=2$;
    \item if $k=2$ and the corresponding edge has finite label $m$ (including $m=2$ when there is no edge), then $C(A)=a_{ij}a_{ji}=4\cos^2(\pi/m)$;
    \item if $k\ge 3$, then some consecutive pair is nonadjacent in the underlying graph, hence $a_{ij}=a_{ji}=0$ and therefore $C(A)=0$.
\end{itemize}

Relevant circuits give rise to nontrivial cyclic products, and these cyclic products provide effective parameters for the point $[P]\in\Ccal(\Gcal)$.

For a circuit $C=(i_1,\ldots,i_k)$ and its opposite circuit $\overline{C}=(i_k,\dots,i_1)$, we define the \emph{normalized cyclic product} by the following:

\begin{equation*}
R_{C}(A_P)\ :=\ \log\!\Big(\frac{C(A_P)}{\overline{C}(A_P)}\Big).
\end{equation*}

When a circuit $C=(i_1,\ldots,i_k)$ has no $\infty$--edges, the corresponding normalized cyclic product is determined by the cyclic product $C(A)$.

\subsection{Irreducibility and integrality of Vinberg representations}
Let $\Gcal$ be a labeled $d$-polytope and $\rho:W_P\to \SL^\pm(d+1,\R)$ associated with a realization $P$ of $\Gcal$. We say that $\rho$ is \emph{irreducible} if $\rho(W_P)$ preserves no nontrivial proper linear subspace of $\R^{d+1}$.

The following criterion relates the irreducibility of  the representation $\rho$ and  the Cartan matrix $A_P$.

\begin{theorem}[\cite{Vinberg71a}]\label{thm:2.14}
    Let $P$ be a Coxeter polytope in $\S^d$  and $W_P$ be its Coxeter group. Then the following are equivalent:
    \begin{itemize}
        \item The representation $\rho : W_P\rightarrow \operatorname{SL}^{\pm}(d+1,\R)$ is irreducible.
        \item $W_P$ is irreducible and the family of polars $(b_s)_{s\in S}$ spans $\R^{d+1}$.
        \item $W_P$ is irreducible and the Cartan matrix $A_P$ of $P$ is of rank $d+1$.
    \end{itemize}
\end{theorem}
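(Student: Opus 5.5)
The plan is to reduce everything to linear algebra on the vectors $\alpha_s\in(\R^{d+1})^*$ and $b_s\in\R^{d+1}$. Set $V=\R^{d+1}$, let $B=\operatorname{span}(b_s)_{s\in S}\subset V$, and let $K=\bigcap_{s\in S}\ker\alpha_s$.

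The first step is to show that $K=0$. Since $P$ is a projective polytope with nonempty interior that is properly convex, its closure lies in an affine chart. If some nonzero $v$ lay in $K$, then $\S(v)$ and $\S(-v)$ would both lie in every half-space $\S\{\alpha_s\le 0\}$, hence in $P$. This contradicts proper convexity. So the $\alpha_s$ span $V^*$, and the map $\alpha\colon V\to\R^S$, $v\mapsto(\alpha_s(v))_s$, is injective.

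The central step is a description of the invariant subspaces. From $\sigma_s(u)-u=-\alpha_s(u)\,b_s$, a subspace $U$ is $\Gamma_P$-invariant if and only if, for each $s$, either $b_s\in U$ or $U\subset\ker\alpha_s$. In particular, every subspace containing $B$ is invariant, and so is every subspace contained in $K$.

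Now let $U$ be an invariant subspace. Put $S_1=\{s: b_s\in U\}$ and $S_2=S\setminus S_1$, so that $U\subset\ker\alpha_t$ for every $t\in S_2$. For $s\in S_1$ and $t\in S_2$ we get $a_{ts}=\alpha_t(b_s)=0$. By condition (2) of Definition~\ref{def:2.6} this also gives $a_{st}=0$, so $A_P$ is block diagonal with respect to $S_1\sqcup S_2$. If $W_P$ (equivalently $A_P$) is irreducible, then either $S_2=\varnothing$, in which case $B\subset U$, or $S_1=\varnothing$, in which case $U\subset K=0$. Hence, when $W_P$ is irreducible and $B=V$, the representation $\rho$ is irreducible.

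For the converse, first suppose $B\neq V$. Then $B$ itself is a nontrivial proper invariant subspace; it is nonzero because $\alpha_s(b_s)=2$. Next suppose $W_P$ is reducible, with a splitting $S=S_1\sqcup S_2$ into nonempty sets such that $a_{st}=0$ across the two parts. Take $U=\operatorname{span}(b_s)_{s\in S_1}$. Each $\sigma_s$ with $s\in S_1$ preserves $U$ since $b_s\in U$, and each $\sigma_t$ with $t\in S_2$ fixes $U$ pointwise since $\alpha_t$ vanishes on $U$. The subspace $U$ is nonzero. It is also proper: if $U=V$, then $\alpha_t=0$ for $t\in S_2$, which is impossible. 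This proves the equivalence of the first two conditions.

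For the equivalence of the second and third conditions, write $A_P=\alpha\circ\beta$, where $\beta\colon\R^S\to V$ sends $e_t\mapsto b_t$. Since $\alpha$ is injective by the first step, $\operatorname{rank}A_P=\operatorname{rank}\beta=\dim B$. Thus $B=V$ if and only if $\operatorname{rank}A_P=d+1$.

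The only step needing care is the first one, that $K=0$, which depends on correctly using proper convexity and the nonempty interior of $P$. Everything else is formal.
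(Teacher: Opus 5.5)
Your proof is correct and complete. The three ingredients are all handled properly: the dichotomy ``$b_s\in U$ or $U\subset\ker\alpha_s$'' for invariant subspaces, the vanishing of $\bigcap_s\ker\alpha_s$ by proper convexity, and the factorization $A_P=\alpha\circ\beta$, which gives $\operatorname{rank}A_P=\dim\operatorname{span}(b_s)_{s\in S}$. The paper gives no proof of its own and only cites Vinberg; your argument is essentially Vinberg's standard one.
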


\begin{corollary}\label{cor:2.15}
Let $P$ be a Coxeter polytope. Assume that $W_P$ is infinite. Then the Vinberg representation $\rho$ is irreducible if and only if $P$ is irreducible and loxodromic.

In particular, for an irreducible, large labeled polytope $\Gcal$, every realization $P$ of $\Gcal$ yields an irreducible Vinberg representation $\rho$.
\end{corollary}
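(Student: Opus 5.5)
The first assertion of Corollary~\ref{cor:2.15} follows from Theorem~\ref{thm:2.14}; the second needs a short argument that every realization is loxodromic.

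For the first assertion, suppose $\rho$ is irreducible. By Theorem~\ref{thm:2.14}, $W_P$ is irreducible, so $P$ is irreducible and $A_P$ has a single irreducible component. Also by Theorem~\ref{thm:2.14}, $\operatorname{rank}(A_P)=d+1$. We must exclude positive and zero type.
\begin{itemize}
\item If $A_P$ were of positive type, then by Vinberg's classification $W_P$ would be finite, contradicting the assumption that $W_P$ is infinite.
\item If $A_P$ were of zero type, then it would be degenerate, with corank one by Perron--Frobenius.
\end{itemize}
In the zero-type case, $P$ is a $d$-polytope with $N\ge d+1$ facets and the $\alpha_s$ span $(\R^{d+1})^*$, so $\operatorname{rank}(A_P)\le d+1$. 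Zero type gives $\operatorname{rank}(A_P)=N-1$. Combined with $\operatorname{rank}(A_P)=d+1$, this forces $N=d+2$. This is not yet a contradiction, so one more step is needed. Since $A_P$ is of zero type, it is (up to diagonal equivalence) a symmetrizable affine Cartan matrix, and $W_P$ is then an affine Coxeter group of rank $N=d+2$. Vinberg's theory then realizes it as a parabolic polytope of dimension $N-2=d$, whose Cartan matrix has rank $N-1=d+1$. The kernel of $A_P$ is spanned by a positive vector $c$. Then $\sum_s c_s\alpha_s$ vanishes on all $b_t$, and since the $b_t$ span $\R^{d+1}$ we get $\sum_s c_s\alpha_s=0$. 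A positive combination of the $\alpha_s$ vanishing means the cone $\{\alpha_s\le 0\}$ is contained in a hyperplane, so $P$ has empty interior, a contradiction. Hence $A_P$ is of negative type and $P$ is loxodromic.

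Conversely, if $P$ is irreducible and loxodromic, then $W_P$ is irreducible and $\operatorname{rank}(A_P)=d+1$, so $\rho$ is irreducible by Theorem~\ref{thm:2.14}.

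For the second assertion, let $\Gcal$ be irreducible and large, and let $P$ be any realization. Then $W_P$ is irreducible and infinite, and it is neither finite nor affine. By Vinberg's results, the type of the Cartan matrix constrains the Coxeter group:
\begin{itemize}
\item positive type forces $W_P$ to be finite;
\item zero type forces $W_P$ to be affine, since an irreducible zero-type Cartan matrix has Coxeter group acting cocompactly on a Euclidean space, i.e. virtually abelian.
\end{itemize}
Hence $A_P$ is of negative type.

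It remains to show $\operatorname{rank}(A_P)=d+1$. Suppose instead that the rank is smaller. By Theorem~\ref{thm:2.14} the $b_s$ then span a proper subspace $V'$, which is $\Gamma_P$-invariant. For an irreducible negative-type matrix, Vinberg shows that the $\alpha_s$ restricted to $V'$ still define a properly convex cone. Moreover, the quotient action on $\R^{d+1}/V'$ is trivial, since each $\sigma_s$ acts as the identity modulo $b_s$. Vinberg's lemma (\cite{Vinberg71a}, Prop.~on reduced polytopes) shows that a projective polytope with this structure is a cone over a lower-dimensional polytope, so $P$ would have a vertex $v$ (the cone point) with $W_v=W_P$. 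Such a $P$ is not a genuine realization of $\Gcal$, because $W_v$ must be finite or, in the $2$-perfect setting, the vertex links must be of the combinatorial type of $\Gcal$. To keep this step safe, I would argue directly instead: in the face poset of $\Gcal$ not all facets meet at a single vertex (since $N>d$), whereas a cone configuration forces all facets through the common point $\S(\bigcap\ker\alpha_s\cap\ldots)$. This is the main obstacle, and I would prove it carefully using $\dim\bigcap_s\ker\alpha_s$ together with proper convexity. With the rank established, the first part gives that $\rho$ is irreducible.
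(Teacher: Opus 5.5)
\textbf{First assertion.} Your argument here is correct, and it is more complete than the paper's, which simply asserts that $A_P$ cannot be of zero type. Only your last step is needed. For zero type, Perron--Frobenius gives $c>0$ with $c^{T}A_P=0$, so $\sum_s c_s\alpha_s$ kills every $b_t$. Since $\operatorname{rank}A_P=d+1$, the $b_t$ span $\R^{d+1}$, hence $\sum_s c_s\alpha_s=0$ and $\{\alpha_s\le 0\}\subset\bigcap_s\ker\alpha_s$, contradicting $\mathring P\neq\varnothing$. Drop the detour through $N=d+2$ and symmetrizability. In particular, the sentence about a ``parabolic polytope of dimension $N-2=d$ whose Cartan matrix has rank $N-1=d+1$'' is wrong (parabolic means rank equal to the dimension) and is never used. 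Your converse is exactly the paper's.

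\textbf{Second assertion: a genuine gap.} The paper offers no argument for the ``in particular'' clause, and your attempt breaks at the rank step. Your reduction to negative type is fine. The cone picture, however, cannot occur: $P$ is properly convex, so the $\alpha_s$ span $(\R^{d+1})^*$ and $\bigcap_s\ker\alpha_s=0$, and there is no point through which all facets pass. What actually happens when $V'=\operatorname{span}\{b_s\}$ is proper is different. The slice $Q=P\cap\S(V')$ is a lower-dimensional Coxeter polytope with the \emph{same} $N$ facets and the same Cartan matrix; each $\alpha_s$ still cuts out a facet, as you see by pushing an interior point along $b_s$.

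This configuration really occurs with irreducible large groups. Take a compact right-angled hyperbolic pentagon, realized by $\bar\alpha_s\in(\R^3)^*$ and $b_s\in\R^3$. In $\R^4=\R^3\oplus\R$, keep the $b_s$ and set $\alpha_s=(\bar\alpha_s,\beta_s)$ with $\beta\in\R^5$ generic. Then $P=\S\{\alpha_s\le 0\}$ is a properly convex $3$-polytope with five facets whose Cartan matrix is that of the pentagon. Every pair of facets satisfies Definition~\ref{def:2.4}: sides nonadjacent in the pentagon have $a_{st}a_{ts}>4$, so any new ridge of $P$ gets label $\infty$. Thus $W_P$ is the right-angled pentagon group, which is irreducible and large, yet $\Gamma_P$ preserves $\R^3\oplus 0$.

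So the clause cannot be proved as stated, and any correct argument must use an extra hypothesis. A natural one is $2$-perfectness with $d\ge 3$, which is exactly where the paper applies the clause. There every ridge label is finite. By faithfulness of $W_Q\to\Gamma_Q$ (Theorem~\ref{thm:2.5}), non-adjacent facets of $Q$ generate infinite dihedral groups, so facets adjacent in $P$ must stay adjacent in $Q$. For $d=3$ this is already a contradiction: $Q$ would be a polygon (or a segment), while every facet of a $3$-polytope has at least three neighbours. Your sketch never actually uses such a hypothesis, so it cannot be completed.
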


\begin{proof}
If $\rho$ is irreducible, then by Theorem~\ref{thm:2.14} the Cartan matrix $A_P$ is irreducible and has rank $d+1$. Since $W_P$ is infinite, $A_P$ is not of positive type. It is also not of zero type, so it must be of negative type. Hence $P$ is irreducible and loxodromic.

Conversely, if $P$ is irreducible and loxodromic, then $A_P$ is irreducible of rank $d+1$. Therefore Theorem~\ref{thm:2.14} implies that $\rho$ is irreducible.
\end{proof}

A real representation $\rho : W_P \to \operatorname{SL}^{\pm}(d+1,\R)$ is said to be \emph{absolutely irreducible} if its complexification $\rho_{\mathbb{C}}$ is irreducible. The following theorem ensures absolute irreducibility of the Vinberg representation.

\begin{theorem}[\cite{Vinberg71b}]\label{thm:2.16}
Let $P$ be an irreducible Coxeter polytope in $\mathbb{S}^d$, and let $\rho : W_P \to \operatorname{SL}^{\pm}(d+1,\R)$ be the Vinberg representation. If $W_P$ is large and $\rho$ is irreducible, then $\rho$ is absolutely irreducible.
\end{theorem}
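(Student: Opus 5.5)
The plan is to argue through Schur's lemma over $\R$. In fact I expect to use only the irreducibility of $\rho$ together with the existence of a single projective reflection in $\rho(W_P)$; the hypotheses that $P$ is irreducible and $W_P$ is large seem to matter only in guaranteeing, via Corollary~\ref{cor:2.15}, that we are in the irreducible setting.

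First, let $D=\operatorname{End}_{W_P}(\R^{d+1})$ be the commutant of $\rho(W_P)$. Since $\rho$ is irreducible over $\R$, Schur's lemma makes $D$ a finite-dimensional real division algebra. By Frobenius, $D$ is isomorphic to $\R$, $\mathbb{C}$ or the quaternions. For a real irreducible representation, the complexification $\rho_{\mathbb{C}}$ is irreducible if and only if $D\otimes_\R\mathbb{C}\cong\operatorname{End}_{W_P}(\mathbb{C}^{d+1})$ is one-dimensional, that is, if and only if $D=\R$. This is the standard criterion: when $D\in\{\mathbb{C},\text{quaternions}\}$, the complexification splits into two summands, using the $\pm i$ eigenspaces of a complex structure lying in $D$.

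Second, suppose for contradiction that $D\neq\R$. Then $D$ contains an element $J$ with $J^2=-\mathrm{Id}$: in the complex case take $J=i$, and in the quaternionic case take any unit pure quaternion. This $J$ commutes with every $\rho(s)=\sigma_s=\mathrm{Id}-\alpha_s\otimes b_s$.

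Third, fix any $s\in S$. The $(-1)$-eigenspace of $\sigma_s$ is the line $\R b_s$, because $\sigma_s$ fixes the hyperplane $\ker\alpha_s$ pointwise and sends $b_s$ to $-b_s$. Since $J$ commutes with $\sigma_s$, it preserves this eigenspace, so $Jb_s=c\,b_s$ for some $c\in\R$. Then $-b_s=J^2b_s=c^2b_s$ forces $c^2=-1$, which is impossible for real $c$. Hence $D=\R$, and $\rho$ is absolutely irreducible.

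The only step needing care is the first one: the equivalence between $D=\R$ and irreducibility of $\rho_{\mathbb{C}}$. I would handle it directly. If $\rho_{\mathbb{C}}$ had a proper invariant subspace $U$, then $U\cap\overline{U}$ and $U+\overline{U}$ are conjugation-invariant, hence complexifications of real invariant subspaces. Real irreducibility then forces $\mathbb{C}^{d+1}=U\oplus\overline{U}$. The map equal to $i$ on $U$ and $-i$ on $\overline{U}$ commutes with complex conjugation, so it is real, and it gives a $J\in D$ with $J^2=-\mathrm{Id}$. This reduces everything to the eigenvector contradiction in the third step.
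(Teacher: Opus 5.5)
Your proof is correct and complete. The last paragraph already produces a real $J$ with $J^2=-\mathrm{Id}$ in the commutant directly from a proper $\rho_{\mathbb{C}}$-invariant subspace $U$, so the appeal to Frobenius is not even needed. That $J$ is then ruled out because it must preserve the line $\R b_s=\ker(\sigma_s+\mathrm{Id})$ and act on it by a real scalar whose square is $-1$.

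The paper gives no proof of its own and simply cites Vinberg. The argument that fits its framework is the field-independent invariant-subspace criterion behind Theorem~\ref{thm:2.14}. If $U$ is invariant under $\sigma_s=\mathrm{Id}-\alpha_s\otimes b_s$, then either $b_s\in U$ or $\alpha_s(U)=0$. Irreducibility of $A_P$ propagates the first alternative along the Coxeter diagram, so $U$ contains every $b_s$ (hence everything, since $\operatorname{rank}A_P=d+1$) or lies in $\bigcap_s\ker\alpha_s=0$. This works verbatim over $\mathbb{C}$ with the same data $(\alpha_s,b_s)$, so irreducibility over $\R$ passes to $\rho_{\mathbb{C}}$.

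The two routes buy different things. That route yields an explicit irreducibility criterion in terms of $A_P$, valid over every field at once. Yours avoids the Cartan matrix entirely and is more general in another direction: any real-irreducible representation whose image contains a single reflection (indeed any element with an odd-dimensional real eigenspace) is absolutely irreducible.

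One small correction to your framing: irreducibility of $\rho$ is already a hypothesis of this statement, so the assumptions that $P$ is irreducible and $W_P$ is large are simply redundant here. They matter only in Lemma~\ref{lem:2.17}, where the paper uses them via Corollary~\ref{cor:2.15} to obtain irreducibility of $\rho$ in the first place.
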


The following lemma gives  a concrete and effective criterion for determining whether a Vinberg representation is integral.

\begin{lemma}[\cite{Vinberg71b}]\label{lem:2.17} Let $P$ be an irreducible, large Coxeter polytope. Then $\rho : W_P \rightarrow \operatorname{SL}^{\pm}(d+1,\R)$ is an integral representation if and only if every cyclic product lies in $\Z$. 
\end{lemma}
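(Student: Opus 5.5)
The integrality criterion of Lemma~\ref{lem:2.17} comes from Vinberg's theory of the trace field and invariant lattice of a linear Coxeter group. I would argue through the characters of $\rho$ and the $\Z$-module generated by products of the vectors $b_s$ and functionals $\alpha_t$.

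\textbf{Setup.} By Theorem~\ref{thm:2.14} and Corollary~\ref{cor:2.15}, $\rho$ is irreducible. Theorem~\ref{thm:2.16} then makes it absolutely irreducible. Consequently the $\R$-span of $\rho(W_P)$ is all of $M_{d+1}(\R)$.

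\textbf{Direction ($\Rightarrow$).} Suppose $g\rho(W_P)g^{-1}\subset G(\Z)$. Then every trace $\operatorname{tr}\rho(w)$ is an integer. I would express cyclic products through traces. For a circuit $(i_1,\dots,i_k)$, expanding
\[
\operatorname{tr}\bigl(\sigma_{i_1}\cdots\sigma_{i_k}\bigr)=\operatorname{tr}\prod_j(\mathrm{Id}-\alpha_{i_j}\otimes b_{i_j})
\]
gives $d+1$ plus a signed sum of cyclic products over subcircuits, with each cyclic product of length $m$ carrying sign $(-1)^m$.

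Induction on the length $k$ then shows that $C(A_P)\in\Z$. A circuit with repeated indices, or with two equal consecutive indices, reduces to shorter circuits using $a_{ii}=2$.

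There is one subtlety: this step requires the integrality of traces of products like $\sigma_{i_1}\cdots\sigma_{i_k}$ with distinct $i_j$. That holds automatically.

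\textbf{Direction ($\Leftarrow$).} Suppose all cyclic products are integers. First normalize $A_P$ by a diagonal matrix $D$ along a spanning tree of the Coxeter diagram, so that entries along the tree become convenient. Then define the lattice
\[
L=\sum_{w\in W_P}\sum_s \Z\,\rho(w)b_s .
\]
This is the standard Vinberg construction: $L$ is spanned over $\Z$ by the vectors $\sigma_{i_1}\cdots\sigma_{i_r}b_s$, and
\[
\sigma_t(v)=v-\alpha_t(v)\,b_t .
\]
Since $\alpha_t(\sigma_{i_1}\cdots b_s)$ is a polynomial in the entries $a_{ij}$, it suffices to rescale so that every product $a_{i_1i_2}\cdots a_{i_{r-1}i_r}$ along a path is an integer.

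\textbf{Main obstacle.} The main difficulty is that the $a_{ij}$ themselves need not be integral, since an entry like $2\cos(\pi/m)$ can be irrational. Only cyclic products are controlled. I would use Vinberg's trick of choosing base points and rescaling $b_s$ by path products $a_{s_0s_1}\cdots a_{s_{r-1}s}$ from a fixed $s_0$. With this choice, $\alpha_t(b'_s)$ becomes a ratio involving cyclic products, and $L$ becomes a $\Z$-module stable under $\rho(W_P)$.

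It remains to show that $L$ is a lattice, i.e.\ finitely generated of rank $d+1$. Rank $d+1$ follows from the $b_s$ spanning $\R^{d+1}$. Discreteness and finite generation follow because the bilinear pairing $(\alpha_t(\cdot))$ takes values in $\Z$ on $L$ and is nondegenerate on the span. This identifies $L$ with a sublattice of a dual lattice, so $L$ is a lattice.

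Choosing $g$ that sends $L$ to $\Z^{d+1}$ then gives integrality.
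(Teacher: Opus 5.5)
\textbf{The converse direction has a gap.} Your forward direction is correct. Expanding $\operatorname{tr}(\sigma_{i_1}\cdots\sigma_{i_k})$ gives $d+1+\sum_{\emptyset\neq J}(-1)^{|J|}C_J(A_P)$ over subwords $J$, and induction on $k$ then gives $C(A_P)\in\Z$ for every circuit. The paper proves neither direction by hand: it gets absolute irreducibility from Corollary~\ref{cor:2.15} and Theorem~\ref{thm:2.16} and quotes \cite[Lemma~6.2]{AudibertDoubaLeeMarquis25_ZariskiClosures}.

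The gap is at the step where you conclude that $L$ is a lattice. Vinberg's rescaling is $b'_s=\lambda_s b_s$, $\alpha'_s=\lambda_s^{-1}\alpha_s$, with $\lambda_s$ a path product to a base node; you must rescale $\alpha_s$ as well, or the $\sigma_s$ change. This makes each $\alpha'_t(b'_s)$ a cyclic product divided by a product of $2$-cyclic products $a_{ij}a_{ji}$. Under your hypothesis these numbers are only \emph{rational}. For example, take the $(4,4,4)$ triangle with $C(1,2,3)=-1$ and $C(1,3,2)=-8$, so that all cyclic products are integers. Normalizing the tree edges $1$--$2$ and $1$--$3$ uniformly from the root forces $\alpha'_2(b'_3)=-\tfrac12$, while $b'_3\in L$.

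So the claim that the pairing is $\Z$-valued on $L$ is false for this normalization. Nothing else in your argument bounds denominators over the infinitely many generators $\rho(w)b'_s$, and the sufficient condition you stated (all path products integral) is never established. Without it, neither finite generation nor discreteness of $L$ follows; a spanning subgroup such as $\Z+\Z\sqrt2\subset\R$ need not be discrete.

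\textbf{A fix.} Pair against functionals built from a single base node, so that only closed circuits appear. Take $M=\sum_{w\in W_P}\Z\,\rho(w)b_{s_0}$. The same expansion as in your trace computation gives
\[
\alpha_{s_0}\bigl(\sigma_{i_1}\cdots\sigma_{i_k}b_{s_0}\bigr)=\sum_{J=\{j_1<\dots<j_m\}}(-1)^m\,a_{s_0i_{j_1}}a_{i_{j_1}i_{j_2}}\cdots a_{i_{j_m}s_0},
\]
where the term $J=\varnothing$ is $\alpha_{s_0}(b_{s_0})=2$. This is an integer combination of cyclic products, so every functional $\alpha_{s_0}\circ\rho(w')$ is $\Z$-valued on $M$.

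By irreducibility of $\rho$, and hence of its dual, these functionals span $(\R^{d+1})^*$, so $M$ is discrete. Irreducibility also makes $M$ span $\R^{d+1}$. Thus $M$ is a $\rho(W_P)$-invariant lattice. Conjugating it to $\Z^{d+1}$, after rescaling by a scalar to land in $\SL^{\pm}$, finishes the proof.

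Alternatively, you can save your rescaling approach with a prime-by-prime potential argument on the diagram. Using that every closed-walk product, including each $a_{ij}a_{ji}$, is an integer, choose the $\lambda_s$ so that all $\alpha'_t(b'_s)\in\Z$; then $L=\sum_s\Z b'_s$ is the lattice. Either way, only irreducibility is needed, not absolute irreducibility.
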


\begin{proof}
Since $P$ is an irreducible loxodromic Coxeter polytope, the corresponding representation $\rho : W_P \to \SL^\pm(d+1,\R)$ is absolutely irreducible by Corollary~\ref{cor:2.15} and Theorem~\ref{thm:2.16}. Therefore, by \cite[Lemma~6.2]{AudibertDoubaLeeMarquis25_ZariskiClosures}, the two conditions are equivalent.
\end{proof}

\section{Deformation spaces of simplices and once truncated simplices}\label{sec:simplex} This section relies on the classification results of \cite{Marquis10_EspacePolyedres} in dimension 3 and of \cite{ChoiLeeMarquis22} in dimensions $d \ge 4$. In the cases of simplices and once-truncated simplices that arise in the truncation setting, the deformation spaces are described explicitly in terms of cyclic products of Cartan matrices.

\subsection{Deformation spaces of 2-perfect simplices} 

We first introduce a deformation space of labeled triangles. Let $\Gcal$ be a labeled triangle, and $W_\Gcal$ is its Coxeter group. To parametrize the deformation space $\Ccal(\Gcal)$ of $\Gcal$, we use its cyclic products.

\begin{theorem}[{\cite[Proposition~4.3]{Marquis10_EspacePolyedres}}]\label{thm:3.1}\label{thm:marquis-triangle}
Let $\Gcal$ be a perfect labeled triangle whose edges are indexed by $1,2,3$.
Let $P$ be a realization of $\Gcal$ and $A_P$ its Cartan matrix.
\begin{enumerate}
    \item If $\Gcal$ has no right angle, then $\Ccal(\Gcal)$ is diffeomorphic to $\R$.
    Moreover, the map
    \begin{equation*}
        [P]\ \longmapsto\ R_{(1,2,3)}(A_P)
    \end{equation*}
    is a diffeomorphism from $\Ccal(\Gcal)$ onto $\R$.
    \item If $\Gcal$ has a right angle, then $\Ccal(\Gcal)$ is a singleton.
\end{enumerate}
\end{theorem}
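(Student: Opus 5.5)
We work directly with Cartan matrices, using Theorem~\ref{thm:2.13}, Proposition~\ref{prop:2.12} and Theorem~\ref{thm:2.8}. Write $m_{ij}$ for the label of the vertex $\{i,j\}$; perfectness means every $m_{ij}$ is finite. A realization $P$ of $\Gcal$ is then determined up to isomorphism by its cyclic products. The $1$- and $2$-circuit products are forced: $a_{ii}=2$ and $a_{ij}a_{ji}=4\cos^2(\pi/m_{ij})$. Every $k$-circuit with $k\ge 4$ in a $3$-element set repeats an index. Its product therefore factors into $1$-, $2$- and $3$-circuit products, so the class $[P]$ is determined by the two numbers $C(A_P)=a_{12}a_{23}a_{31}$ and $\overline C(A_P)=a_{13}a_{32}a_{21}$.

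Their product is forced as well:
\begin{equation*}
C(A_P)\,\overline C(A_P)=\prod_{i<j}a_{ij}a_{ji}=64\prod_{i<j}\cos^2(\pi/m_{ij})=:K.
\end{equation*}

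\textbf{Case (2).} Suppose $m_{12}=2$. Then $a_{12}=a_{21}=0$, so both $3$-circuit products vanish and every cyclic product is forced. By Theorem~\ref{thm:2.13} there is at most one point, and it suffices to exhibit one realization. The hyperbolic triangle with the given angles exists, since perfectness rules out right angles adjacent to each other and $\sum\pi/m_{ij}<\pi$ holds in the hyperbolic setting; in any case we need only a Cartan matrix of negative type and rank $3$ with these entries. So $\Ccal(\Gcal)$ is a singleton.

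\textbf{Case (1).} Now all $m_{ij}\ge 3$, so $K>0$ and all off-diagonal entries are negative. By the product identity, $C$ and $\overline C$ are determined by $R=\log(C/\overline C)$: we have $C=-\sqrt{K}e^{R/2}$ and $\overline C=-\sqrt{K}e^{-R/2}$. Thus $[P]\mapsto R_{(1,2,3)}(A_P)$ is injective.

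For surjectivity, given $R\in\R$ we use the diagonal action to normalize $a_{ij}=a_{ji}=-2\cos(\pi/m_{ij})$ for the pairs $(1,2)$ and $(2,3)$. We then set $a_{13}=-2\cos(\pi/m_{13})\,e^{-R/2}$ and $a_{31}=-2\cos(\pi/m_{13})\,e^{R/2}$, with the sign chosen so that the ratio is $e^{R}$. This gives a Cartan matrix $A_R$ that is irreducible, since the diagram is a triangle. Expanding the determinant,
\begin{equation*}
\det A_R=8-2\sum_{i<j}a_{ij}a_{ji}+(C+\overline C)=8-8\sum_{i<j}\cos^2\tfrac{\pi}{m_{ij}}-2\sqrt{K}\cosh(R/2).
\end{equation*}
This is strictly less than its value at $R=0$, where $A_0$ is the symmetric Gram matrix of the hyperbolic triangle. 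There it is already negative, because the triangle is hyperbolic, perfect, and has no right angle. So $\det A_R<0$ for all $R$. Hence $A_R$ has rank $3$. A $3\times3$ irreducible Cartan matrix with negative determinant cannot be of positive or zero type, so $A_R$ is of negative type.

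Theorem~\ref{thm:2.8} then gives a Coxeter triangle $P_R$ with $A_{P_R}=A_R$. Its dihedral angles are $\pi/m_{ij}$ by construction. Its combinatorics is that of a triangle with three facets, since Vinberg's construction produces a polytope with exactly these three facets and no redundant ones. So $P_R$ realizes $\Gcal$, and the map is bijective.

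\textbf{Smoothness.} The inverse $R\mapsto[P_R]$ is smooth, because $A_R$ depends smoothly on $R$ and the construction of Theorem~\ref{thm:2.8} is smooth in the matrix. The forward map is smooth in the cyclic-product coordinates. Together these make the map a diffeomorphism onto $\R$.

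The main obstacle is the existence step: checking that every $A_R$ is of negative type and rank $3$, and that the resulting polytope is a genuine triangle realizing $\Gcal$. This relies on the determinant monotonicity above and on the hyperbolic base point. It requires $\sum 1/m_{ij}<1$, which I take as part of the setting. For a perfect triangle with an affine or spherical diagram I would instead argue directly that $\det A_R<0$.
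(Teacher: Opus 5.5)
Your injectivity argument (the identity $C\overline C=K$) and your normalized matrix $A_R$ give the same one-parameter description the paper uses. The paper rescales $A_P$ so that the only free entry is $x$ in position $(2,3)$, and reads off $R_{(1,2,3)}(A_P)$ as $2\log x$ up to sign. The gap is in your existence step.

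\textbf{The existence step only covers large triangles.} The statement covers every perfect labeled triangle, but you add the hypothesis $\sum 1/m_{ij}<1$. Your route through Theorem~\ref{thm:2.8} needs a Cartan matrix of negative type and rank $3$, and that is not available outside the large case.
\begin{itemize}
\item \emph{Case (1), the $\widetilde A_2$ triangle $(3,3,3)$.} This is the only triangle you exclude there. Here $\sqrt K=1$ and $\det A_R=2-2\cosh(R/2)$. This is negative for $R\neq0$ but vanishes at $R=0$. At $R=0$ the realization is the Euclidean equilateral triangle, whose Cartan matrix has zero type and rank $2$. So your fallback, to ``argue directly that $\det A_R<0$'', fails exactly there. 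The case matters for the paper: $\widetilde A_2$ vertex links of $3$-simplices are what Corollary~\ref{cor:3.7} and Theorem~\ref{thm:5.5} treat when $d=3$.
\item \emph{Case (2).} Your claim that perfectness rules out two right angles is false, since $(2,2,n)$ is perfect. For the spherical and Euclidean right-angled triangles, the unique Cartan matrix has positive or zero type. For example, $(2,3,3)$ gives $\det A=4>0$ and $(2,4,4)$ gives $\det A=0$. So no negative-type matrix ``with these entries'' exists.
\end{itemize}

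\textbf{The fix is simpler and removes what you called the main obstacle.} For a triangle, the three facet functionals are linearly independent. Hence any Cartan matrix $A$ with the prescribed products $a_{ij}a_{ji}=4\cos^2(\pi/m_{ij})$ is realized directly:
\begin{itemize}
\item take $\alpha_i=-e_i^*$;
\item define $b_j$ by $\alpha_i(b_j)=a_{ij}$;
\item set $P=\S\{x : x_1,x_2,x_3\ge 0\}$.
\end{itemize}
Then $(P,(\mathrm{Id}-\alpha_j\otimes b_j)_{j})$ satisfies Definition~\ref{def:2.4} and realizes $\Gcal$, whatever the type of $A$. This gives surjectivity in case (1) for every $R$, and nonemptiness in case (2) for spherical, Euclidean and hyperbolic triangles alike. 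It needs no determinant estimate and no appeal to Theorem~\ref{thm:2.8}. It is also why the paper can pass silently from ``a unique parameter $x\in(0,\infty)$'' to a diffeomorphism onto $\R$.
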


\begin{proof}
Assume that $\Gcal$ has no right angle. Up to $\SL^\pm(3,\R)$-conjugation and diagonal rescaling, we may assume that the Cartan matrix $A_P$ of $P$ is of the form
\begin{equation*}
A_P=
\begin{pmatrix}
2 & -2\cos\frac{\pi}{m_{12}} & -2\cos\frac{\pi}{m_{13}}\\
-2\cos\frac{\pi}{m_{12}} & 2 & -2\cos\frac{\pi}{m_{23}}\,x\\
-2\cos\frac{\pi}{m_{13}} & -2\cos\frac{\pi}{m_{23}}\,x^{-1} & 2
\end{pmatrix}
,
\end{equation*}
for a unique parameter $x\in(0,\infty)$.
Substituting these entries, we compute
\begin{equation*}
R_{(1,2,3)}(A_P)=\log\!\left(\frac{a_{12}(x)a_{23}(x)a_{31}(x)}{a_{13}(x)a_{32}(x)a_{21}(x)}\right)=-2\log x.
\end{equation*}
Hence $[P]\mapsto R_{(1,2,3)}(A_P)$ gives a diffeomorphism $\Ccal(\Gcal)\cong\R$.

If $\Gcal$ has a right angle, then its Coxeter diagram contains no $3$-cycle (equivalently, no relevant circuit),
hence $\Ccal(\Gcal)$ is rigid.
\end{proof}

Let $\Gcal$ be an irreducible, large, $2$-perfect labeled simplex whose vertices are indexed by $1,2,3,4$. By the definition of $2$-perfectness, every vertex link $\Gcal_i$ of $\Gcal$ is perfect. Each vertex link has its own cyclic products, and hence its own normalized cyclic products.

We now parametrize $\Ccal(\Gcal)$ using the normalized cyclic products introduced above.
\begin{theorem}[{\cite[Proposition~4.29]{Marquis10_EspacePolyedres}}]\label{thm:3.2}
    Let $\Gcal$ be an irreducible, $2-$perfect, labeled $3$-simplex whose vertices are indexed by $1,2,3,4$.
    Then the deformation space $\Ccal(\Gcal)$ of $\Gcal$ is diffeomorphic to an open cell of dimension $b(\Gcal) = e_+-3\in\{0,1,2,3\}$.
\end{theorem}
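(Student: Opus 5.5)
Since the dimension of $\Ccal(\Gcal)$ is determined by the relevant cyclic products of the Cartan matrix (Theorem~\ref{thm:2.13}), the plan is to parametrize $\Ccal(\Gcal)$ directly by the Cartan matrix modulo diagonal equivalence, and then identify the realizable locus.

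\textbf{Step 1: normal form for $A_P$.} Index the four facets (opposite the vertices) by $1,2,3,4$, so that $A_P$ is a $4\times 4$ Cartan matrix. Let $\Gamma_+$ be the graph on $\{1,2,3,4\}$ whose edges are the non-right-angled ridges; it has $e_+$ edges. Irreducibility of $\Gcal$ means that $\Gamma_+$ is connected. Choose a spanning tree $T\subset\Gamma_+$, which has $3$ edges. Using the $(\R_+)^4$-action, which modulo scalars is $3$-dimensional, we symmetrize $a_{ij}=a_{ji}=-2\cos(\pi/m_{ij})$ along the edges of $T$. This fixes the diagonal matrix up to a scalar. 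For each of the remaining $e_+-3$ edges $\{i,j\}$ of $\Gamma_+$, write $a_{ij}=-2\cos(\pi/m_{ij})\,x_{ij}$ and $a_{ji}=-2\cos(\pi/m_{ij})\,x_{ij}^{-1}$ with $x_{ij}>0$. By Proposition~\ref{prop:2.12} and Theorem~\ref{thm:2.13}, distinct parameter vectors give inequivalent matrices: each extra edge closes a unique cycle with $T$, and the normalized cyclic product along that cycle equals $\pm 2\log x_{ij}$, as in the proof of Theorem~\ref{thm:3.1}. Note that $2$-perfectness forces every dihedral angle to be finite, so no $\infty$-labels occur. We therefore obtain an injective smooth map $\Ccal(\Gcal)\to (\R_{>0})^{e_+-3}$.

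\textbf{Step 2: realizability.} Next we determine which parameters give a Coxeter simplex with the correct face lattice. Every facet-subset of size at most $3$ corresponds to a face, and $2$-perfectness says that the rank-$2$ and rank-$3$ subdiagrams (the vertex links) are spherical. For a $4\times4$ matrix of rank $4$, Theorem~\ref{thm:2.8} then produces a simplex whose combinatorics is automatically that of $\Gcal$, since the four half-spaces in general position cut out a simplex. So the realizable locus is exactly the set of parameters for which $A$ is of negative type with $\det A\neq 0$. If $W_\Gcal$ is finite or affine the situation is rigid; there $e_+=3$ and the space is a point, consistent with the statement. In the large case, every proper principal submatrix is of positive type, because the links are spherical. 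A Perron--Frobenius argument then shows that $A$ is of negative type if and only if $\det A<0$. The main computation is to expand $\det A$ in terms of the $x_{ij}$. Each $x_{ij}$ enters only through cyclic products, giving terms of the form $c\,(x+x^{-1})$ for $3$-cycles and analogous combinations for $4$-cycles, plus a constant. Hence $\det A = K - \sum(\text{positive coefficient})\cdot(\text{cyclic-product symmetric terms})$.

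\textbf{Step 3: topology.} The main obstacle is to show that $\{\det A<0\}$ is an open cell, and not merely nonempty. The plan is to use coordinates $t_{ij}=\log x_{ij}$, in which each term $x^{\pm1}$ (and its products over cycles) becomes a positive combination of exponentials of linear forms. So $-\det A$ is a convex function of $t$, namely a constant plus a sum of $\cosh$'s of linear forms with positive coefficients. Its sublevel set $\{-\det A<0\}$ would then be convex, which gives the wrong orientation. Instead, I expect the locus to be the complement $\{\det A<0\}=\{F(t)>K\}$, where $F$ is convex and proper. When $K<\min F$ this complement is all of $\R^{e_+-3}$. In general I would fall back on the case analysis by $e_+\in\{3,4,5,6\}$ and on the explicit Lann\'er-type checks, as Marquis does. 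The parameter count $e_+-3$ then gives the dimension $b(\Gcal)=e_+-3\in\{0,1,2,3\}$.
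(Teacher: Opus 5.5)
Your Step~1 is correct. It is a cleaner, uniform version of the paper's argument. The paper splits into cases by the number and position of right angles and counts independent normalized cyclic products: four triangle products with one linear relation, then two, one, or none. That count is exactly the cycle rank $e_+-3$ of your graph $\Gamma_+$.

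The gap is in Steps~2--3: you look for a realizability condition that does not exist, and then cannot control the region it cuts out. For a labeled simplex, every Cartan matrix with the prescribed products $a_{ij}a_{ji}=4\cos^2(\pi/m_{ij})$ is realized. Take $\alpha_1,\dots,\alpha_4$ to be any basis of $(\R^4)^*$. Then $\bigcap_s \S\{\alpha_s\le 0\}$ is a simplex with the correct face lattice, and the equations $\alpha_s(b_t)=a_{st}$ determine the $b_t$ uniquely. Since all pairs of facets of a simplex are adjacent, the conditions of Definition~\ref{def:2.4} reduce exactly to those of Definition~\ref{def:2.6}. So your injective map $\Ccal(\Gcal)\to(\R_{>0})^{e_+-3}$ is surjective, and together with Theorem~\ref{thm:2.13} this finishes the proof. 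You need neither Theorem~\ref{thm:2.8} nor any condition on the type of $A$ or on $\det A$. The paper uses this fact tacitly when it treats the normalized cyclic products as free parameters.

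As written, Step~3 is not an argument. You find $\{\det A<0\}=\{F>K\}$ with $F$ convex; this is in general the complement of a convex set and need not be a cell, and you then defer to Marquis. Within your own framework, the missing observation is that $F$ is even in $t$, because $C$ and $\overline{C}$ pair into a $\cosh$. Hence $F$ is minimized at $t=0$, and the region is everything exactly when $\det A<0$ at the symmetric point.

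More seriously, your criterion rests on false claims:
\begin{itemize}
\item $2$-perfectness only makes the rank-$2$ subdiagrams spherical. Vertex links may be affine or hyperbolic triangle groups; this is precisely the difference between Lann\'er and $2$-Lann\'er. So your Perron--Frobenius equivalence between negative type and $\det A<0$ is not available in general.
\item The affine case is not always rigid. For $\widetilde{A}_3$ (a $4$-cycle, $e_+=4$), in your normal form $\det A=2-x-x^{-1}$. The symmetric point $x=1$ is a genuine realization whose Cartan matrix has rank $3$. Your criterion would therefore produce $\R\setminus\{0\}$ instead of the correct answer $\R$.
\end{itemize}
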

\begin{proof}
Since $\Gcal$ is $2$-perfect, every vertex link $\Gcal_i$ is perfect, so $\Gcal$ cannot have an edge labeled by $\infty$. Thus $\Gcal$ falls into one of the five cases according to the number and position of right-angled edges. Label the facets by $\{1,2,3,4\}$ so that removing $i$ gives the Coxeter diagram of the link $\Gcal_i$. For $[P]\in\Ccal(\Gcal)$, write $A_P=(a_{ij})$.

\smallskip
\noindent
\emph{Case (1): no right-angled edge.}
In this case, each link $\Gcal_i$ is a non-right-angled perfect triangle, and hence each contributes one parameter. Set
\begin{equation*}
C_1=(2,3,4),\quad C_2=(4,3,1),\quad C_3=(1,2,4),\quad C_4=(1,2,3).
\end{equation*}
By the same argument as in Theorem~\ref{thm:3.1}, the four normalized cyclic products
\begin{equation*}
R_{C_1}(A_P),\quad R_{C_2}(A_P),\quad R_{C_3}(A_P),\quad R_{C_4}(A_P)
\end{equation*}
determine $[P]$. On the other hand, multiplying the defining ratios shows that their total product is equal to $1$, and therefore
\begin{equation*}
R_{C_1}(A_P)+R_{C_2}(A_P)+R_{C_3}(A_P)+R_{C_4}(A_P)=0.
\end{equation*}
It follows that $\Ccal(\Gcal)$ is diffeomorphic to an open cell of dimension $3$.

\smallskip
\noindent
\emph{Case (2): exactly one right-angled edge.}
In this case, exactly two links contribute independent parameters, say
\begin{equation*}
R_{C_1}(A_P)\quad\text{and}\quad R_{C_3}(A_P),
\end{equation*}
and the remaining cyclic data are determined by these parameters. Hence $\Ccal(\Gcal)$ is diffeomorphic to an open cell of dimension $2$.

\smallskip
\noindent
\emph{Cases (3) and (4): exactly two right-angled edges.}
There is a unique relevant circuit, which arises either from a vertex link or from the $4$-cycle
\begin{equation*}
C=(1,2,3,4).
\end{equation*}
Hence the deformation space $\Ccal(\Gcal)$ is parametrized by a single normalized cyclic product, so it is diffeomorphic to an open cell of dimension $1$.

\smallskip
\noindent
\emph{Case (5): at least three right-angled edges.}
In this case, the Coxeter diagram has no circuit of length at least $3$. Thus there is no normalized cyclic-product parameter, and $\Ccal(\Gcal)$ consists of a single point.

\smallskip
\noindent
In every case, $\Ccal(\Gcal)$ is diffeomorphic to an open cell of dimension $b(\Gcal)=e_+-3\in\{0,1,2,3\}$, as claimed.
\end{proof}

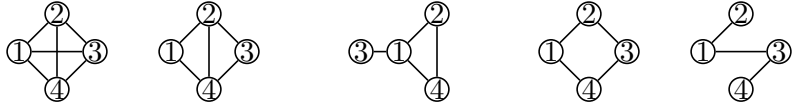
\begin{figure}[ht]
\centering
\begin{tikzpicture}
    \begin{scope}[xshift=0cm]
        \node[vtx] (L)    at (0,0)  {1};
        \node[vtx] (T)    at (0.5,0.5)  {2};
        \node[vtx] (R)    at (1,0)  {3};
        \node[vtx] (B)    at (0.5,-0.5) {4};
        \draw[edg] (L)--(T)--(R)--(B)--(L);
        \draw[edg] (L)--(R);
        \draw[edg] (T)--(B);
    \end{scope}

    \begin{scope}[xshift=2cm]
        \node[vtx] (L)    at (0,0)  {1};
        \node[vtx] (T)    at (0.5,0.5)  {2};
        \node[vtx] (R)    at (1,0)  {3};
        \node[vtx] (B)    at (0.5,-0.5) {4};
        \draw[edg] (L)--(T)--(R)--(B)--(L);
        \draw[edg] (T)--(B);
    \end{scope}
    \begin{scope}[xshift=5cm]
       \node[vtx] (L)    at (0,0)  {1};
        \node[vtx] (T)    at (0.5,0.5)  {2};
        \node[vtx] (R)    at (-0.5,0)  {3};
        \node[vtx] (B)    at (0.5,-0.5) {4};
        \draw[edg] (L)--(T)--(B)--(L);
        \draw[edg] (R)--(L);
    \end{scope}
    \begin{scope}[xshift=7cm]
       \node[vtx] (L)    at (0,0)  {1};
        \node[vtx] (T)    at (0.5,0.5)  {2};
        \node[vtx] (R)    at (1,0)  {3};
        \node[vtx] (B)    at (0.5,-0.5) {4};
        \draw[edg] (L)--(T)--(R)--(B)--(L);
    \end{scope}
    \begin{scope}[xshift=9cm]
       \node[vtx] (L)    at (0,0)  {1};
        \node[vtx] (T)    at (0.5,0.5)  {2};
        \node[vtx] (R)    at (1,0)  {3};
        \node[vtx] (B)    at (0.5,-0.5) {4};
        \draw[edg] (L)--(T);
        \draw[edg] (B)--(R);
        \draw[edg] (L)--(R);
    \end{scope}
    \end{tikzpicture}
\caption{Examples of Cases (1)-(5), from left to right}
\label{fig:tits-examples}
\end{figure}

\begin{remark}\label{rem:3.3}
Cases~(3) and (4) in the previous theorem are called the pan type and the cycle type, respectively. By Appendix~C of \cite{ChoiLeeMarquis22}, every irreducible $2$-perfect labeled simplex of dimension $d\ge 4$ belongs to one of the three types shown in the Figure below.
\end{remark}

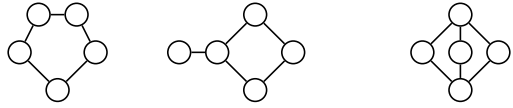
\begin{figure}[ht]
\centering
\begin{tikzpicture}
    \begin{scope}[xshift=0cm]
        \node[vtx] (a1) at (0,0) {};
        \node[vtx] (a2) at (0.25,0.5) {};
        \node[vtx] (a3) at (0.75,0.5) {};
        \node[vtx] (a4) at (1,0) {};
        \node[vtx] (a5) at (0.5,-0.5) {};
        \draw[edg] (a1)--(a2)--(a3)--(a4)--(a5)--(a1);
    \end{scope}

    \begin{scope}[xshift=2.6cm]
        \node[vtx] (b1) at (-0.5,0) {};
        \node[vtx] (b2) at (0,0) {};
        \node[vtx] (b3) at (0.5,0.5) {};
        \node[vtx] (b4) at (1,0) {};
        \node[vtx] (b5) at (0.5,-0.5) {};
        \draw[edg] (b1)--(b2);
        \draw[edg] (b2)--(b3)--(b4)--(b5)--(b2);
    \end{scope}

    \begin{scope}[xshift=5.3cm]
        \node[vtx] (c1) at (0.5,0.5) {};
        \node[vtx] (c2) at (0.5,-0.5) {};
        \node[vtx] (c3) at (0,0) {};
        \node[vtx] (c4) at (0.5,0) {};
        \node[vtx] (c5) at (1,0) {};
        \draw[edg] (c1)--(c3);
        \draw[edg] (c1)--(c4);
        \draw[edg] (c1)--(c5);
        \draw[edg] (c2)--(c3);
        \draw[edg] (c2)--(c4);
        \draw[edg] (c2)--(c5);
    \end{scope}
\end{tikzpicture}
\caption{A $5$-cycle, a $4$-pan and $K_{2,3}$ from left to right.}
\label{fig:graphs}
\end{figure}

Using  Remark~\ref{rem:3.3}, we describe the deformation space of an irreducible,
$2$-perfect labeled simplex of dimension $d\ge4$ by the same method.

\begin{theorem}\label{thm:3.4}
Let $\Gcal$ be an irreducible, $2$-perfect labeled simplex of dimension $d\ge4$,
and let $\Ccal(\Gcal)$ be its deformation space. Then $\Ccal(\Gcal)$ is
homeomorphic to an open cell of dimension
$b(\Gcal)=e_+-d\in\{0,1,2\}$.
\end{theorem}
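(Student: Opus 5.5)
By Remark~\ref{rem:3.3}, the Coxeter diagram of $\Gcal$ is a $5$-cycle, a $4$-pan, or $K_{2,3}$; in each case $d=4$, so there are five facets. The plan is to treat the three cases separately. In each case we normalize the Cartan matrix and read off the free parameters from the relevant circuits, as in Theorems~\ref{thm:3.1} and~\ref{thm:3.2}. Since $\Gcal$ is $2$-perfect, no edge carries the label $\infty$. Hence every length-$2$ cyclic product is fixed, and by Theorem~\ref{thm:2.13} the point $[P]$ is determined by the cyclic products along cycles of length $\ge 3$ in the diagram.

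First, reduce to cycle parameters. Choose a spanning tree $T$ of the diagram. Using the diagonal action $A\mapsto DAD^{-1}$, make $a_{ij}=a_{ji}=-2\cos(\pi/m_{ij})$ for every edge of $T$. For each edge $e=\{i,j\}$ not in $T$, there is then a unique $x_e>0$ with $a_{ij}=-2\cos(\pi/m_{ij})x_e$ and $a_{ji}=-2\cos(\pi/m_{ij})x_e^{-1}$. The number of such edges is the cycle rank $e_+-|S|+1=e_+-4$ of the diagram; here $e_+$ counts non-right-angled ridges, which are exactly the diagram edges. So $e_+-d=e_+-4$ in all three cases. This gives $1$ for the $5$-cycle (five edges) and for the $4$-pan (five edges), and $2$ for $K_{2,3}$ (six edges). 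Every cyclic product along a cycle is a monomial in the $x_e$ times a fixed constant, so the normalized cyclic products $R_C$ over a cycle basis equal $\pm2\log x_e$. This is the same computation as in the proof of Theorem~\ref{thm:3.1}. By Proposition~\ref{prop:2.12} and Theorem~\ref{thm:2.13}, the map $[P]\mapsto(\log x_e)_e$ is therefore injective and continuous from $\Ccal(\Gcal)$ into $\R^{e_+-4}$.

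Next, show the image is all of $\R^{e_+-4}$. For each parameter value, the matrix $A(x)$ built above is a Cartan matrix with the prescribed labels. It is irreducible because the diagram is connected. The task is to show that it has negative type and rank exactly $5$, and that Theorem~\ref{thm:2.8} then produces a Coxeter polytope which is combinatorially a $4$-simplex with the right ridge labels. The combinatorial part is routine: all five facets appear, and for every proper subset $S'$ the submatrix is of positive type. That follows because each proper subdiagram is spherical, which is what $2$-perfectness and the simplex combinatorics encode, and a cyclic twist does not change type on a tree-like piece. Positive-type submatrices are exactly the condition for the corresponding faces to be present.

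The main obstacle I expect is the determinant and rank condition. For a simplex, five facets in $\S^4$ require $\operatorname{rank}A=5$, that is, $\det A(x)\neq 0$, together with negativity of the Perron eigenvalue. I would expand $\det A(x)$ along the cycle structure. For the $5$-cycle, the determinant is a constant minus $2\prod_i(2\cos\frac{\pi}{m_i})\,(x+x^{-1})$, up to sign conventions. Since $x+x^{-1}\ge 2$, this is strictly negative for all $x$ as soon as it is negative at $x=1$. At $x=1$ the matrix is the symmetric Gram matrix, which is of negative type because $\Gcal$ is large. Monotonicity in $x+x^{-1}$ then gives the same conclusion for every $x$. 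The $4$-pan is analogous, with the pendant vertex contributing a constant factor. For $K_{2,3}$ I would write the two independent $4$-cycles and check that the determinant is affine in $x_1+x_1^{-1}$, $x_2+x_2^{-1}$ and $x_1x_2^{-1}+x_1^{-1}x_2$, with all coefficients of the same sign, so that again it is bounded away from zero by its value at the symmetric point. Given $\det\neq0$ and irreducibility, negative type follows by continuity along the connected parameter space from the symmetric point, since the smallest eigenvalue cannot cross zero. Theorem~\ref{thm:2.8} then supplies the realization. Hence the map is a continuous bijection onto $\R^{e_+-4}$ with continuous inverse $x\mapsto[P_{A(x)}]$, which gives the required homeomorphism onto an open cell of dimension $e_+-d\in\{0,1,2\}$. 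The value $0$ is included only for uniformity, since it does not occur among the three diagrams.
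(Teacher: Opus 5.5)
Your injectivity argument is sound, and it is essentially what the paper's one-line proof (``the same argument as in Theorem~\ref{thm:3.2}'') amounts to. Normalizing along a spanning tree leaves one parameter $x_e>0$ per non-tree edge, the fundamental cycles give $R_C=\pm2\log x_e$, and Proposition~\ref{prop:2.12} with Theorem~\ref{thm:2.13} gives injectivity.

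The gap is in your surjectivity step. You infer $\det A(1)<0$ from ``$A(1)$ is of negative type because $\Gcal$ is large''. Negative type only says the Perron--Frobenius eigenvalue is negative. The sign of $\det A(1)$ depends on how many eigenvalues are negative or zero, so what you actually need is that $A(1)$ has signature $(4,1)$. This is true for these diagrams, but it needs an argument:
\begin{itemize}
\item on the $5$-cycle, $2$-perfectness forces the labels into $\{3,4,5\}$ with no two adjacent labels $\ge4$, so deleting a suitable node leaves a spherical path;
\item on the pan, deleting the cycle node carrying the pendant leaves a spherical subdiagram;
\item in both cases, Cauchy interlacing then gives four positive eigenvalues, and negative type makes the fifth negative;
\item $K_{2,3}$ needs a separate Schur-complement check.
\end{itemize}

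Your justification of the combinatorics is also wrong as stated. It is not true that every proper subdiagram is spherical: $2$-perfectness only controls subdiagrams with at most $d-1$ nodes, and vertex links can be affine or Lann\'er. For instance, deleting the pendant of the pan leaves a $4$-cycle, which is never spherical. Likewise, positive type of $A_{S'}$ (equivalently, finiteness of $W_{S'}$) governs whether a face lies in $\Omega_P$ (Theorem~\ref{thm:2.5}(5)); faces with infinite stabilizer are still faces. The conclusion that the realization is a $4$-simplex still holds, but for the simpler reason that $\operatorname{rank}A=5$ forces the five $\alpha_s$ to be linearly independent.

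In fact the detour through Theorem~\ref{thm:2.8} is unnecessary for a simplex. Given any Cartan matrix $A$ with the prescribed products $a_{ij}a_{ji}$, take $(\alpha_s)_{s\in S}$ to be a basis of $(\R^{d+1})^*$ and define $b_t$ by $\alpha_s(b_t)=a_{st}$. Then $P=\bigcap_s\S\{\alpha_s\le 0\}$ is a simplex satisfying Definition~\ref{def:2.4} with the right labels. So $[P]\mapsto(\log x_e)_e$ is onto $\R^{e_+-d}$ with no determinant or type analysis, and without using largeness, which the statement does not assume.

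Separately, you have misread Remark~\ref{rem:3.3}. Its figure only shows five-node instances; the theorem concerns all $d\ge4$, and $b=0$ does occur. For example, the Lann\'er $4$-simplex whose diagram is a path with labels $5,3,3,3$ is irreducible and $2$-perfect with $e_+=d$. In dimension $5$, a $5$-cycle with all labels $3$ plus one pendant node attached by a $3$-edge is an irreducible, large, $2$-perfect simplex. Your spanning-tree argument works verbatim for any connected diagram, so state it in that generality rather than asserting $d=4$.
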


\begin{proof}
By the above remark, $\Gcal$ is of $K_{2,3}$, pan, or cycle type, so the
conclusion follows by the same argument as in Theorem~\ref{thm:3.2}.
\end{proof}

\subsection{Truncation and once truncated simplices}
In this subsection, we explain the truncation procedure used to construct truncation polytopes. We recall what truncation means in our setting.

Let $\Gcal$ be a combinatorial polytope. A \emph{truncation at a vertex} $v$ is the operation of cutting off $v$ and replacing it with a new facet $s$; we denote the resulting polytope by $\Gcal^{\dagger v}$. A \emph{truncation $d$-polytope} is a polytope obtained from a $d$-simplex by successively truncating vertices. A \emph{once truncated simplex} is a polytope obtained from a simplex by truncating each vertex of a simplex at most once.

In the Coxeter setting, there is a similar criterion for when a vertex is truncatable:

\begin{definition}\label{def:3.5}
Let $P$ be a Coxeter polytope in $\S^d$, let $v$ be a vertex of $P$, and let $S_v$ be the set of facets of $P$ containing $v$. The vertex $v$ is \emph{truncatable} if the projective subspace $\Pi_v=\operatorname{Span}\{b_s : s\in S_v\}$ is a hyperplane, and for each edge $e$ containing $v$, the intersection of $\Pi_v$ with the relative interior of $e$ consists of a single point.
\end{definition}

We denote by $\Pi_v^+$ (resp.\ $\Pi_v^-$) the connected component of $\S^d\setminus \Pi_v$ that does not contain $v$ (resp.\ contains $v$). After truncation, we obtain a new Coxeter polytope $P^{\dagger v}=P\cap \overline{\Pi_v^+}$. By the construction of $\Pi_v$, all poles in $S_v$ are contained in $\Pi_v$ and this implies the following:
\begin{itemize}
    \item the dihedral angles of the ridges in the new facets of $P^{\dagger v}$ are all right angles.
    \item the hyperplane $\Pi_v$ is preserved by the reflections $\{\sigma_s~|~s\in S_v\}$, and $P~\cap ~\Pi_v$ is a Coxeter polytope which is isomorphic to $P_v$.
\end{itemize}

We define truncatability for a set of vertices $\Vcal$ of $P$ as follows: the set $\Vcal$ is \emph{truncatable} if each vertex $v\in\Vcal$ is truncatable and
$P~\cap~\Pi_v~\cap~\Pi_w = \varnothing$
for any distinct pair of vertices $v,w \in \Vcal$.

Recall that a vertex $v$ of $P$ is \emph{simple} if the link $P_v$ is a simplex, and a polytope is \emph{simple} if all its vertices are simple.

\begin{theorem}[{\cite[Prop.~4.14 and Lem.~4.17]{Marquis17_CoxeterHilbert}}]\label{thm:3.6} 
Let $P$ be an irreducible, loxodromic, $2$-perfect Coxeter polytope.
Assume that $\Vcal$ is a set of simple vertices of $P$.
Then $\Vcal$ is truncatable if and only if the vertex link $P_v$ is loxodromic for each $v\in\Vcal$.
\end{theorem}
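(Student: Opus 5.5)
We would first treat a single simple vertex $v$ and then handle the pairwise condition for $\Vcal$. Since $v$ is simple, $S_v$ has exactly $d$ elements, the forms $(\alpha_s)_{s\in S_v}$ are linearly independent, and $\bigcap_{s\in S_v}\ker\alpha_s=\langle v\rangle$. The link $P_v$ is a Coxeter $(d-1)$-simplex in $\S(\R^{d+1}/\langle v\rangle)$ with Cartan matrix the principal submatrix $A_v:=(\alpha_s(b_t))_{s,t\in S_v}$. Because $P$ is $2$-perfect, $P_v$ is perfect.

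The first step is linear algebra. The matrix $A_v$ is the matrix of the pairing between $\operatorname{span}\{\alpha_s\}_{s\in S_v}$ and $\operatorname{span}\{b_t\}_{t\in S_v}$. Hence $\operatorname{rank}A_v=d$ if and only if the $b_t$ span a $d$-dimensional subspace $\widetilde\Pi_v$ with $\widetilde\Pi_v\cap\langle v\rangle=0$. In that case $\widetilde\Pi_v$ is a linear complement of $\langle v\rangle$, so $\Pi_v$ is a hyperplane avoiding $v$. Next we use Vinberg's trichotomy for the perfect simplex $P_v$. The zero type has rank $d-1<d$, so it is excluded by this criterion. It then remains to separate the positive (elliptic, $W_v$ finite) case from the negative (loxodromic) case. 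Irreducibility of $A_v$ when $W_v$ is infinite should follow from perfectness of the simplex link, since a reducible infinite link would force an infinite edge group.

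The second step is the edge condition, and this is where positive and negative type differ. Each edge $e\ni v$ corresponds to a vertex of $P_v$, that is, to a choice $s_0\in S_v$ with $e\subset\bigcap_{s\ne s_0}\ker\alpha_s$. When $\widetilde\Pi_v$ is a complement of $\langle v\rangle$, write $b_t=c_t v+\beta_t$ with $\beta_t\in\widetilde\Pi_v$ and parametrize $e$ as $v+\tau u_e$. Using $\alpha_s(v)=0$ and expressing $u_e$ via the dual basis of $(\alpha_s)$ on $\widetilde\Pi_v$, the intersection of $\Pi_v$ with the line through $e$ occurs at a parameter determined by the column of $A_v^{-1}$ indexed by $s_0$ together with the coefficients $c_t$. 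We would then invoke the Perron--Frobenius characterization of type, together with the classical description of the inverse of an irreducible Cartan matrix. For negative type, $A_v^{-1}$ has all entries negative. For positive type, all entries are positive. So in the loxodromic case every edge meets $\Pi_v$ on the side $\tau>0$. The remaining task is to show that this happens before the other endpoint of $e$, i.e. in the relative interior of $e$. For this we would use that the open edge has finite $W_e$, hence lies in $\Omega_P$ by Theorem~\ref{thm:2.5}(5), while $v\notin\overline{\Omega_P}$ when $W_v$ is infinite and $A_v$ is negative type. The invariant hyperplane $\Pi_v$ then separates $v$ from $\Omega_P$ near each edge, by a supporting-hyperplane argument for the $\Gamma_v$-invariant convex set. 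In the positive case the sign is reversed, so the intersection falls outside $e$ and truncatability fails. This gives both directions for a single vertex.

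The last step is the pairwise condition $P\cap\Pi_v\cap\Pi_w=\varnothing$ for distinct $v,w\in\Vcal$. I expect this to be the main obstacle. The plan is to show that $P\cap\Pi_v$ lies in the closure of the region of $P$ cut off near $v$, namely the convex hull of the intersection points on the edges at $v$. Then show that $\Pi_w$ cannot reach this region. For adjacent $v,w$ joined by an edge $e$, we would compare the two intersection points on $e$ using the sign computation above applied at both ends. Here $2$-perfectness makes $W_e$ finite, and we would use the fact that the reflections of $S_e=S_v\cap S_w$ preserve both $\Pi_v$ and $\Pi_w$. For nonadjacent vertices, convexity together with simplicity of $P$ should reduce the claim to the adjacent case. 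If this reduction fails, the fallback is Marquis's dual argument that $\Pi_v\cap\overline{\Omega_P}$ is the boundary of a $\Gamma_v$-invariant properly convex region containing $P$ near $v$. Two such regions for different vertices are then disjoint by proper discontinuity of $\Gamma_P$ on $\Omega_P$.
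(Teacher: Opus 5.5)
The paper quotes this statement from Marquis and gives no proof of its own, so I am judging your outline on its own merits. The overall plan is reasonable: a rank criterion plus Vinberg's trichotomy for the perfect link, then the edge condition, then the pairwise condition. But both substantive steps have a genuine gap.

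\emph{Edge condition.} As set up, Step~2 is not meaningful.
\begin{itemize}
\item If $\beta_t\in\widetilde\Pi_v$, then $c_t=0$.
\item The sign of $\tau$ in $v+\tau u_e$ depends on the choice of $u_e$ modulo $\langle v\rangle$, so it is not intrinsic.
\item In $\S^d$, once $v\notin\Pi_v$, the open half great circle of $\S(L_e)$ starting at $v$ and containing $e$ meets $\Pi_v$ exactly once, whatever the type of $A_v$. Here $L_e=\bigcap_{s\in S_v\setminus\{s_0\}}\ker\alpha_s$. Everything hinges on whether this happens before the far endpoint $w$.
\end{itemize}
Your justification for that last point is false. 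We always have $v\in P\subset\overline{\Omega_P}$. Moreover, $\Pi_v$ cannot separate $v$ from $\Omega_P$, because points of $\mathring P$ near $v$ lie in $\Omega_P$ on the same side as $v$.

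What the sign of $A_v^{-1}$ actually gives is the form of the crossing point. Take $x$ a suitable multiple of the $s_0$-column of $A_v^{-1}$ and set $\ell=\sum_{t\in S_v}x_tb_t$. The crossing point is $[\ell]$ with all $x_t>0$ in the negative case. It is $[-\ell]$ with $x\ge 0$ and $x_{s_0}>0$ in the positive case.

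The missing step is to test this point against the facets $s'\notin S_v$. Since off-diagonal Cartan entries are nonpositive, $\alpha_{s'}(\ell)=\sum_t x_t a_{s't}\le 0$.
\begin{itemize}
\item In the negative case this puts $[\ell]$ in $P\cap\S(L_e)=e$. If $[\ell]=w$, there would be a facet $s'\ni w$ with $a_{s't}=0$ for all $t\in S_v$. Then $\ker\alpha_{s'}=\widetilde\Pi_v$, so $P=\operatorname{conv}(v,P\cap\Pi_v)$ is a simplex and $W_P=W_v\times\Z/2$, contradicting irreducibility.
\item In the positive case, $\alpha_{s'}(-\ell)\ge 0$, strictly for some $s'$ by irreducibility, so $\Pi_v\cap e=\varnothing$.
\end{itemize}
So what decides is irreducibility of $P$ together with the Cartan sign conditions of the remaining facets, not $\Omega_P$.

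\emph{Pairwise condition.} Running the single-vertex computation at both ends of a common edge only places each crossing point somewhere in the open edge; it does not order them. Neither the proposed reduction of nonadjacent pairs to adjacent ones nor the fallback via proper discontinuity is an actual argument.

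The missing ingredient is a sign for $\phi_w$, the linear form with kernel $\widetilde\Pi_w$ and $\phi_w(w)>0$.
\begin{itemize}
\item Since $w$ is truncatable, $P^{\dagger w}$ is a Coxeter polytope. Its new facet $\ker\phi_w$ has pole a positive multiple of $w$ and meets only the facets of $S_w$, at right angles.
\item For $t\notin S_w$, the Cartan entry of $t$ against the new facet is a positive multiple of $\alpha_t(w)<0$. Vinberg's conditions for this pair then force $\phi_w(b_t)<0$. For $t\in S_w$ we have $\phi_w(b_t)=0$.
\item $P\cap\Pi_v$ is the convex hull of the crossing points from the first step. Hence each of its points is $[\sum_{t\in S_v}y_tb_t]$ with all $y_t>0$.
\item Since $S_v\not\subset S_w$, it follows that $\phi_w<0$ on $P\cap\Pi_v$.
\end{itemize}
This gives $P\cap\Pi_v\cap\Pi_w=\varnothing$ for adjacent and nonadjacent $v,w$ alike.
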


Recall that a Coxeter group $W_S$ is \emph{Lann\'er} if it is large and $W_{S\setminus\{s\}}$ is spherical for every $s\in S$, and it is \emph{$2$-Lann\'er} if it is irreducible, large, and $W_{S\setminus\{s,t\}}$ is spherical for every distinct $s,t\in S$. See \cite{Lanner50, Koszul67, Chein69} for the classification of Lann\'er groups and \cite{Maxwell82} for the classification of $2$-Lann\'er groups.

Before stating the characterization of truncatability, we recall the consequences from \cite[Lem.~2.15 and Rmk.~2.18]{ChoiLeeMarquis22} that will be used below. A labeled simplex is perfect, irreducible, and large if and only if its Coxeter group is Lann\'er, and it is $2$-perfect, irreducible, and large if and only if its Coxeter group is $2$-Lann\'er. Moreover, a loxodromic perfect Coxeter simplex has Coxeter group either Lann\'er or of affine type $\widetilde A_d$; see \cite[p.~141, Fig.~2]{Humphreys90} for the Lann\'er simplex groups and \cite[p.~34, Fig.~2]{Humphreys90} for affine type $\widetilde A_d$.

\begin{corollary}[{\cite[Cor.~4.7]{ChoiLeeMarquis22}}]\label{cor:3.7}
Let $\Gcal$ be an irreducible, large, $2$-perfect labeled simple polytope of dimension $d\ge 3$,
and let $v$ be a vertex of $\Gcal$. Assume that $[P]\in \Ccal(\Gcal)$. Then:
\begin{itemize}
\item if $v$ is a truncatable vertex of $P$, then $v$ is Lann\'er or $\widetilde A_{d-1}$;
\item if $v$ is Lann\'er, then $v$ is a truncatable vertex of $P$;
\item if $v$ is $\widetilde A_{d-1}$, then $v$ is a truncatable vertex of $P$
if and only if the normalized cyclic product $R_{C_v}(A_P)\neq 0$,
where $C_v$ is a relevant circuit of $W_{G_v}$.
\end{itemize}
\end{corollary}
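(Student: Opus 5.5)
The plan is to reduce everything to Theorem~\ref{thm:3.6} applied to the single vertex set $\Vcal=\{v\}$, and then to decide directly from the Cartan matrix of the link when $P_v$ is loxodromic.

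\textbf{Setup and first bullet.} Since $\Gcal$ is simple, every vertex $v$ is simple, so the link $P_v$ is a Coxeter $(d-1)$-simplex with facets indexed by $S_v$ and $|S_v|=d$. Its Cartan matrix $A_{P_v}$ is the principal submatrix of $A_P$ on $S_v$. Since $P$ is $2$-perfect, $P_v$ is perfect. Also $P$ is irreducible, loxodromic (Corollary~\ref{cor:2.15}) and $2$-perfect. Theorem~\ref{thm:3.6} then says that $v$ is truncatable if and only if $P_v$ is loxodromic. A loxodromic perfect Coxeter simplex has Coxeter group Lann\'er or $\widetilde A_{d-1}$ (the consequence of \cite{ChoiLeeMarquis22} recalled above), so the first bullet is immediate.

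\textbf{Lann\'er case.} Suppose $W_v$ is Lann\'er. Then $A_{P_v}$ is irreducible, and it is not of positive or zero type because $W_v$ is large and hence infinite. So it is of negative type. It remains to show $\operatorname{rank}A_{P_v}=d$, i.e.\ $\det A_{P_v}\neq 0$.

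Every proper principal submatrix corresponds to a spherical subgroup, so it is of positive type and has positive determinant. The argument I would use is Vinberg's: for an irreducible Cartan matrix all of whose proper principal minors are positive, being of negative type is equivalent to $\det<0$. One way to see this is to deform $A_{P_v}$ continuously by shrinking off-diagonal entries toward a positive-type matrix. The Perron--Frobenius eigenvalue $\lambda_A$ is the only eigenvalue that can cross $0$ along such a path without forcing some proper principal minor to vanish. Hence $\lambda_A<0$ forces $\det A_{P_v}<0$. Therefore $P_v$ is loxodromic and $v$ is truncatable.

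\textbf{$\widetilde A_{d-1}$ case.} Here the diagram is a $d$-cycle with all labels $3$, and the only relevant circuit is $C_v$. Proper principal submatrices are of type $A_k$ and have positive determinant, so by the same dichotomy $A_{P_v}$ is of negative type iff $\det<0$, and of zero type with rank $d-1$ iff $\det=0$.

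Expanding the determinant of a cycle-shaped Cartan matrix gives
\[
\det A_{P_v}=\delta - C_v(A_{P_v})-\overline{C_v}(A_{P_v}),
\]
where $\delta$ is the contribution of the non-cyclic permutations. For label $3$ one has $\delta=2$. Since every edge has $a_{ij}a_{ji}=1$, we get $C_v\overline{C_v}=1$, and hence
\[
\det A_{P_v}=2-e^{R_{C_v}/2}-e^{-R_{C_v}/2}=-\bigl(e^{R_{C_v}/4}-e^{-R_{C_v}/4}\bigr)^2 .
\]
This is strictly negative exactly when $R_{C_v}(A_P)\neq 0$. In that case $P_v$ is loxodromic; otherwise it is parabolic. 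Theorem~\ref{thm:3.6} then gives the third bullet.

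\textbf{Main obstacle.} The step needing care is the rank and determinant claim in the Lann\'er case: that negative type together with spherical proper subdiagrams forces $\det\neq 0$. This is where I would rely on Vinberg's sign lemma for Cartan matrices (\cite{Vinberg71a}) rather than reprove it. The cycle determinant identity in the $\widetilde A_{d-1}$ case is a routine expansion.
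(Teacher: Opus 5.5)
Your proposal is correct. It follows the route behind the cited result: the paper gives no proof of its own, only \cite[Cor.~4.7]{ChoiLeeMarquis22} placed right after Theorem~\ref{thm:3.6}. That route applies Theorem~\ref{thm:3.6} to $\{v\}$ and then decides whether the link simplex is loxodromic, using the Lann\'er/$\widetilde A_{d-1}$ classification together with Vinberg's sign-of-determinant criterion.

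One slip needs fixing. In the cycle expansion the two $d$-cycle terms carry the sign $(-1)^{d-1}$, while $C_v(A_P)$ has sign $(-1)^d$. The correct identity is therefore $\det A_{P_v}=2-|C_v(A_P)|-|\overline{C_v}(A_P)|$, and your displayed $2-C_v-\overline{C_v}$ is wrong for odd $d$. Your final expression $2-e^{R_{C_v}/2}-e^{-R_{C_v}/2}$ is nevertheless correct.
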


We say that a vertex $v$ of $\Gcal$ is \emph{of type $\mathsf{T}$} if its link $\Gcal_v$ is \emph{of type $\mathsf{T}$}. For example, saying that a vertex v is of type $\widetilde{A}_{d-1}$ means that its link $\Gcal_v$ is of type $\widetilde{A}_{d-1}$.

The following result is an immediate consequence of the previous corollary.

\begin{corollary}\label{cor:3.8}
    Let $\Gcal$ be an irreducible, large, $2-$perfect labeled simple polytope of dimension $d\ge 3$, $\Vcal_A$ the set of all $\widetilde{A}_{d-1}$ vertices of $\Gcal$, and $\Vcal$ a set of all loxodromic vertices of $\Gcal$.
    Define
    \begin{equation*}
        \Ccal(\Gcal)^{\dagger \Vcal}=\{[P]\in\Ccal(\Gcal)~|~ R_{C_v}(A_P)\neq0 \text{ for each }v\in\Vcal~\cap~\Vcal_A\}.
    \end{equation*}
    Then the map $ \Ccal(\Gcal)^{\dagger \Vcal}\rightarrow \Ccal(\Gcal^{\dagger\Vcal})$ induced by the truncation is a homeomorphism. In particular, if the link $\Gcal_v$ is Lann\`er for each $v\in\Vcal$, then $\Ccal(\Gcal)$ is homeomorphic to $ \Ccal(\Gcal^{\dagger \Vcal})$.
\end{corollary}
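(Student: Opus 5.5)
We prove Corollary~\ref{cor:3.8} by applying Corollary~\ref{cor:3.7} to each vertex in $\Vcal$, then invoking Theorem~\ref{thm:3.6} and Vinberg's correspondence between Cartan matrices and polytopes (Theorems~\ref{thm:2.8} and~\ref{thm:2.13}).

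\emph{Step 1: the truncation map is defined.} Fix $[P]\in\Ccal(\Gcal)^{\dagger\Vcal}$. Each $v\in\Vcal$ is loxodromic, so by the first bullet of Corollary~\ref{cor:3.7} it is Lann\'er or $\widetilde A_{d-1}$. If $v$ is Lann\'er, the second bullet says $v$ is truncatable in $P$. If $v$ is $\widetilde A_{d-1}$, the defining condition $R_{C_v}(A_P)\neq0$ of $\Ccal(\Gcal)^{\dagger\Vcal}$ and the third bullet again give truncatability. Every vertex of $\Gcal$ is simple, and $P_v$ is loxodromic for each $v\in\Vcal$. Theorem~\ref{thm:3.6} then upgrades truncatability of the individual vertices to truncatability of the whole set $\Vcal$, so that the hyperplanes $\Pi_v$ are pairwise disjoint inside $P$. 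Hence
\begin{equation*}
P^{\dagger\Vcal}=P\cap\bigcap_{v\in\Vcal}\overline{\Pi_v^+}
\end{equation*}
is a Coxeter polytope realizing $\Gcal^{\dagger\Vcal}$. Its new ridges are right-angled because the poles $b_s$, $s\in S_v$, lie in $\Pi_v$. The construction is $\SL^\pm(d+1,\R)$-equivariant, since $\Pi_v$ is spanned by poles, so the map $[P]\mapsto[P^{\dagger\Vcal}]$ is well defined and continuous.

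\emph{Step 2: Cartan matrices and the inverse map.} Write the Cartan matrix of $P^{\dagger\Vcal}$ in block form. It contains $A_P$ as a block, together with new rows and columns for the facets $\Pi_v$. Each new entry $\alpha_{\Pi_v}(b_s)$ or $\alpha_s(b_{\Pi_v})$ vanishes whenever $s\in S_v$ or $s=\Pi_w$ with $w\neq v$, because of the right angles and the disjointness of the $\Pi_v$. For $s\notin S_v$ the pair $(s,\Pi_v)$ is non-adjacent, and the corresponding entries are whatever the geometry forces. In particular, every relevant circuit of $\Gcal^{\dagger\Vcal}$ avoids the new facets, so its cyclic products coincide with those of $A_P$. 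By Theorem~\ref{thm:2.13}, this gives injectivity.

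For the inverse, take a realization $Q$ of $\Gcal^{\dagger\Vcal}$ and delete the facets $\Pi_v$, $v\in\Vcal$. We must show the result is a polytope $P$ realizing $\Gcal$ in which each $v$ is a genuine vertex, and that $Q=P^{\dagger\Vcal}$.

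\emph{Main obstacle.} This inverse step is where I expect the real work. I would argue as follows. The facets of $Q$ adjacent to the new facet $\Pi_v$ cut out, inside $\Pi_v$, a realization of the link $\Gcal_v$, which is loxodromic. The $d$ hyperplanes $\ker\alpha_s$, $s\in S_v$, meet in a single point $v$ on the far side of $\Pi_v$. Dropping the half-space bounded by $\Pi_v$ therefore adds back the cone from $v$ over this face. The new body $P$ is properly convex and remains a Coxeter polytope with the same Cartan data restricted to $S$.

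Using that $Q$ is a Coxeter polytope, the poles $b_s$ with $s\in S_v$ lie in $\Pi_v$: this follows from the right-angle condition $\alpha_{\Pi_v}(b_s)=0$. Consequently $\Pi_v$ is exactly the hyperplane spanned by these poles, and $Q=P^{\dagger\Vcal}$. Moreover $v$ is truncatable in $P$. If $v$ is of type $\widetilde A_{d-1}$, truncatability forces $R_{C_v}(A_P)\neq0$ by the third bullet of Corollary~\ref{cor:3.7}, so $[P]\in\Ccal(\Gcal)^{\dagger\Vcal}$. The inverse is continuous because the Cartan data vary continuously.

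Finally, if every $\Gcal_v$ with $v\in\Vcal$ is Lann\'er, then $\Vcal\cap\Vcal_A=\varnothing$. In that case $\Ccal(\Gcal)^{\dagger\Vcal}=\Ccal(\Gcal)$, which gives the last assertion.
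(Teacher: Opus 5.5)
Your Step~1 is sound, and it is essentially all the paper does. The paper calls the statement an immediate consequence of Corollary~\ref{cor:3.7}: it identifies $\Ccal(\Gcal)^{\dagger\Vcal}$ with the locus where $\Vcal$ is truncatable, and takes the homeomorphism onto $\Ccal(\Gcal^{\dagger\Vcal})$ from the known truncation theory.

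\textbf{The gap is in your inverse step}, at the sentence ``Consequently $\Pi_v$ is exactly the hyperplane spanned by these poles.''

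\emph{Why it fails.} The right angles give $b_s\in\ker\alpha_{\Pi_v}$ for $s\in S_v$, so you only get $\operatorname{span}\{b_s\}_{s\in S_v}\subseteq\ker\alpha_{\Pi_v}$. Equality needs the $b_s$ to be linearly independent. That means the Cartan matrix $A_{S_v}$ of the simplex $Q\cap\Pi_v$ must be nonsingular, i.e.\ that simplex must be loxodromic. For a link of type $\widetilde A_{d-1}$ this is exactly $R_{C_v}\neq0$, since the cycle matrix is singular iff $C_v(A)=\overline{C}_v(A)$. Parabolic realizations of such simplices do exist. So your earlier assertion that the realization cut out in $\Pi_v$ ``is loxodromic'' is an assumption, not a consequence. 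Your final deduction of $R_{C_v}(A_P)\neq0$ from truncatability via Corollary~\ref{cor:3.7} is then circular. You are assuming exactly what surjectivity requires: that no realization $Q$ of $\Gcal^{\dagger\Vcal}$ has a parabolic right-angled simplex facet.

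\emph{How to close it.} Suppose $A_{S_v}$ were singular.
\begin{enumerate}
\item Being irreducible of zero type, $A_{S_v}$ has a null vector $c$ with all $c_s>0$.
\item Set $w=\sum_s c_sb_s$. Then $w$ lies in $\bigcap_{s\in S_v}\ker\alpha_s=\R b_{\Pi_v}$, by the right angles and because the $\alpha_s$ are independent. Also $w\in\ker\alpha_{\Pi_v}$. Since $\alpha_{\Pi_v}(b_{\Pi_v})=2\neq 0$, it follows that $w=0$.
\item For every facet $t\notin S_v$, the relation $0=\sum_s c_s\alpha_t(b_s)$ has all terms $\le 0$. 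This forces $\alpha_t(b_s)=\alpha_s(b_t)=0$ for all $s\in S_v$.
\item Then $S_v$ is cut off from the rest of the Coxeter diagram, contradicting the irreducibility of $\Gcal$.
\end{enumerate}

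\emph{A smaller unjustified claim.} ``Dropping the half-space adds back the cone'' also needs an argument. The apex is $[b_{\Pi_v}]$, not its antipode, because $b_{\Pi_v}\in\bigcap_{s\in S_v}\ker\alpha_s$ and $\alpha_{\Pi_v}(b_{\Pi_v})=2>0$. No other facet cuts the added pyramid because $\alpha_t(b_{\Pi_v})\le 0$ for $t\notin S_v$.

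\textbf{Step~2 contains a false claim.} The new facet $\Pi_v$ is adjacent only to the facets in $S_v$. So for every other facet $t$, including $\Pi_w$ with $w\neq v$, the pair $(\Pi_v,t)$ carries the label $\infty$ in $W_{\Gcal^{\dagger\Vcal}}$. The entries $\alpha_{\Pi_v}(b_t)$ and $\alpha_t(b_{\Pi_v})$ cannot both vanish: that would make $\sigma_{\Pi_v}$ and $\sigma_t$ commute, so $\sigma_{\Pi_v}\sigma_t$ would have order at most $2$. Hence $(\Pi_v,t)$ is a relevant $2$-circuit. Relevant circuits of $\Gcal^{\dagger\Vcal}$ therefore do pass through the new facets, and their cyclic products are new quantities: determined by $P$, but not among the cyclic products of $A_P$.

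Injectivity still holds, for a simpler reason. $A_P$ is the principal $S\times S$ submatrix of $A_{P^{\dagger\Vcal}}$. A diagonal equivalence $A_{P^{\dagger\Vcal}}\sim A_{Q^{\dagger\Vcal}}$ restricts to $A_P\sim A_Q$, so Theorem~\ref{thm:2.13} applies. Equivalently, $P$ is recovered from $P^{\dagger\Vcal}$ by discarding the half-spaces of the new facets.
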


\section{Deformation spaces of 2-perfect truncation polytopes}\label{sec:general}
Let $\Gcal$ be an irreducible, $2$-perfect, large truncation polytope, and let $\Ccal(\Gcal)$ denote its deformation space. We identify the cyclic products that provide the relevant parameters for the deformation space.

\subsection{Gluing and splitting along prismatic circuits}
\begin{definition}\label{def:4.1}
Let $\mathcal{G}$ be a combinatorial $d$-polytope and $\mathcal{G}^*$ be the dual polytope. Let $S_{\mathcal{G}}$ be the set of facets of $\mathcal{G}$.
A subset $\delta\subset S_{\mathcal{G}}$ is called a \emph{prismatic circuit} if

\begin{itemize}
    \item the convex hull $\Delta_\delta$ of dual vertices in $\delta^*$ is a $(d-1)$-simplex, and
    \item the relative interior of $\Delta_\delta$ lies in the interior of $\Gcal^*$.
\end{itemize}
\end{definition}

Let $\Gcal_1$ and $\Gcal_2$ be combinatorial $d$-polytopes, and let $v_i$ be a vertex of $\Gcal_i$ for $i \in \{1,2\}$. Let $\Gcal_i^{\dagger v_i}$ denote the polytope obtained by truncating $v_i$. If there exists a poset isomorphism
\begin{equation*}
    \phi : (\Gcal_1)_{v_1} \longrightarrow (\Gcal_2)_{v_2}
\end{equation*}
between the vertex links, then one can glue $G_1^{\dagger v_1}$ and $G_2^{\dagger v_2}$ along the corresponding truncation facets via $\phi$. The set of facets appearing in the link $(\Gcal_i)_{v_i}$ forms a prismatic circuit. We call this procedure the \emph{gluing} of $G_1^{\dagger v_1}$ and $G_2^{\dagger v_2}$.

Conversely, let $\Gcal$ be a combinatorial $d$-polytope and let $\delta$ be a prismatic circuit of $\Gcal$. In the dual polytope $\Gcal^*$, the vertices dual to the facets in $\delta$ span a $(d-1)$-simplex $\Delta_\delta$ whose relative interior lies in $\mathring{\Gcal}^*$. Cutting $G^*$ along $\Delta_\delta$ and dualizing back produces two polytopes $\Gcal_1$ and $\Gcal_2$. The simplex $\Delta_\delta$ becomes a common vertex of $\Gcal_1$ and $\Gcal_2$; we denote the corresponding vertices by $v_1 \in \Gcal_1$ and $v_2 \in \Gcal_2$. We call this procedure the \emph{splitting} of $\Gcal$ along $\delta$.

We now distinguish three types of prismatic circuits for labeled polytopes.
\begin{definition}\label{def:4.2}
    Let $\Gcal$ be a labeled $d-$polytope and $\delta$ be a prismatic circuit of $\Gcal$.
    Suppose $\Gcal$ splits along $\delta$ into  $\mathcal{G}_1^{\dagger v_1}$ and $\mathcal{G}_2^{\dagger v_2}$. A prismatic circuit $\delta$ is
    \begin{itemize}
        \item \emph{useless} if $\mathcal{G}_1^{\dagger v_1}= \mathcal{G}_2^{\dagger v_2}=\Gcal$ and $W_\Gcal=W_\delta \times \tilde{A_1}$;
        \item \emph{non-essential} if there is $i \in \{1,2\}$ such that $\mathcal{G}_i^{\dagger v_i}$ is useless;
        \item \emph{essential}, otherwise. 
    \end{itemize}
\end{definition}

The above definition leads to the following lemma for Coxeter polytopes.

\begin{lemma}[{\cite[Lem~5.4]{ChoiLeeMarquis22}}]\label{lem:4.3}
    Let $\Gcal$ be an irreducible, $2$--perfect, large labeled polytope of dimension $d \ge 3$ and $\delta$ a prismatic circuit of $\Gcal$. Assume $\Gcal$ splits along $\delta$ into $\Gcal_1$ and $\Gcal_2$. If $\Ccal(\Gcal)\neq \varnothing$, then the following hold:
    \begin{enumerate}
        \item the polytopes $\Gcal_1$ and $\Gcal_2$ are $2$--perfect;
        \item the Coxeter group $W_\delta$ is $\tilde{A}_{d-1}$ or Lann\`er;
        \item if $\delta$ is essential, then both $\Gcal_1$ and $\Gcal_2$ are irreducible and large.
    \end{enumerate}
\end{lemma}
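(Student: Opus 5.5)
This lemma is cited from \cite[Lem~5.4]{ChoiLeeMarquis22}, so the cleanest route is to invoke that reference. The plan below reconstructs the argument from the tools already recalled in this paper. Fix a realization $P$ of $\Gcal$; it exists because $\Ccal(\Gcal)\neq\varnothing$. By Corollary~\ref{cor:2.15}, $P$ is irreducible and loxodromic. Let $f_\delta$ be the intersection of the facets of $P$ indexed by $\delta$. Since $\Delta_\delta$ is a $(d-1)$-simplex whose relative interior lies in the interior of $\Gcal^*$, the $d$ facets in $\delta$ do not share a common vertex of $\Gcal$. On the other hand, every proper subset of $\delta$ spans a face of $\Delta_\delta$, hence a face of $\Gcal^*$. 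So every $d-1$ of these facets meet in an edge of $\Gcal$, and any two of them are adjacent.

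\textbf{Part (2).} By $2$-perfectness, the group $W_{\delta\setminus\{s\}}$ is the stabilizer of an edge and is therefore finite for every $s\in\delta$. I would argue that $W_\delta$ itself is infinite. The reason is that if $W_\delta$ were finite, Theorem~\ref{thm:2.5}(5) together with Vinberg's theory would force the facets in $\delta$ to meet in a face of $P$, contradicting prismaticity. Now $W_\delta$ has rank $d$, is infinite, and every maximal standard subgroup of it is finite. By the classification of such minimal non-spherical Coxeter groups, $W_\delta$ is either irreducible affine or Lann\'er. (A reducible minimal non-spherical group would have a proper infinite standard subgroup.) The affine case has to be narrowed to $\tilde A_{d-1}$. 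For this I would restrict the Cartan matrix $A_P$ to $\delta$ and use that the facets in $\delta$ are pairwise adjacent around a "missing vertex": the corresponding polytope in the quotient by the common kernel is a $(d-1)$-simplex whose edges are all present, namely the link obtained after splitting. By \cite[Cor.~4.7]{ChoiLeeMarquis22} (Corollary~\ref{cor:3.7}), a vertex of affine type at which truncation is possible must be $\tilde A_{d-1}$, since the other affine diagrams are trees and give rigid, parabolic links that cannot be cut off by a hyperplane. I expect this step to be the main obstacle: one has to realize the splitting geometrically, meaning that the span of the poles $b_s$ for $s\in\delta$ is a hyperplane cutting $P$ into two Coxeter polytopes.

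\textbf{Part (1).} Concretely, let $\Pi_\delta$ be the span of $\{b_s : s\in\delta\}$. This span is preserved by the $\sigma_s$ for $s\in\delta$, and it is a hyperplane once we know that $A_\delta$ is loxodromic or of $\tilde A_{d-1}$ type with nonzero cyclic product. The hyperplane $\Pi_\delta$ meets $P$ in a copy of the $(d-1)$-dimensional Coxeter simplex with group $W_\delta$, and it cuts $P$ into two pieces. Reflecting through the new facet, with right angles as in the truncation construction, gives realizations of $\Gcal_1$ and $\Gcal_2$. Their edges are either edges of $P$, which have finite stabilizers, or edges of the new facet. The stabilizer of an edge of the new facet is the stabilizer of an edge of the simplex in $\Pi_\delta$, which is finite, times a right-angle factor. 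Hence $\Gcal_1$ and $\Gcal_2$ are $2$-perfect.

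\textbf{Part (3).} If $\delta$ is essential, neither piece is of the useless form $W_\delta\times\tilde A_1$. Each Cartan matrix $A_{P_i}$ is obtained from a principal submatrix of the irreducible matrix $A_P$ by adding one new row and column. Those new entries are nonzero on the facets adjacent to $\delta$, so irreducibility follows from connectivity of the Coxeter diagram of $\Gcal$ restricted to each side. If some piece $W_{\Gcal_i}$ were not large, the Margulis--Vinberg trichotomy would make it spherical or affine. Spherical is impossible since $W_{\Gcal_i}$ contains the infinite group $W_\delta$. An affine piece of rank at least $d+1$ containing $W_\delta$ would force $\Gcal_i^{\dagger v_i}$ to be the prism over $\Delta_\delta$ with group $W_\delta\times\tilde A_1$, which is exactly the useless case that essentiality excludes.
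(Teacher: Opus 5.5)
Citing \cite{ChoiLeeMarquis22} is all the paper does; it gives no argument of its own. Your reconstruction, however, does not stand on its own, and the gap is exactly the step you flag.

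\textbf{The gap in (2).} To rule out the tree-shaped affine diagrams you apply Corollary~\ref{cor:3.7} at $v_1\in P_1$. That corollary needs three things you do not yet have:
\begin{enumerate}
\item $P_1$ is a Coxeter polytope realizing an irreducible, large, $2$-perfect $\Gcal_1$;
\item $v_1$ is truncatable, i.e.\ $\Pi_\delta$ is a hyperplane meeting each edge $e_s=\bigcap_{t\in\delta\setminus\{s\}}t$ in its interior;
\item the irreducibility and largeness of $\Gcal_1$, which is part (3).
\end{enumerate}
Item (2) is what your part (1) assumes ``once we know'' part (2). Item (3) fails for non-essential $\delta$, while part (2) is claimed for every prismatic circuit. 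So the argument is circular.

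The missing idea is a direct Perron--Frobenius argument. For a tree diagram, Proposition~\ref{prop:2.12} makes $A_\delta$ equivalent to the symmetric cosine matrix, so $A_\delta$ has zero type. Hence there is $c\in\R^\delta_{>0}$ with $A_\delta c=0$. Put $b=\sum_{t\in\delta}c_t b_t$. Then $\alpha_s(b)=0$ for $s\in\delta$, and for $r\notin\delta$ we have $\alpha_r(b)=\sum_t c_t a_{rt}\le 0$.
\begin{itemize}
\item If $b\neq 0$, then $[b]\in P$ lies on every facet of $\delta$. So these facets share a face, hence a vertex, contradicting prismaticity.
\item If $b=0$, every term $c_t a_{rt}$ vanishes, so $a_{rt}=a_{tr}=0$ for all $r\notin\delta$, $t\in\delta$. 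This contradicts irreducibility of $A_P$.
\end{itemize}
So $A_\delta$ is never of zero type, with no truncation input; in particular $R_\delta\neq 0$ in the $\tilde A_{d-1}$ case. This is essentially the content behind \cite[Lem.~8.19]{Marquis17_CoxeterHilbert}, quoted in Remark~\ref{rem:4.5}.

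\textbf{Parts (1) and (3).} Part (1) needs no geometry. The $\Gcal_i$ are the untruncated pieces. Their edges through $v_i$ have group $W_{\delta\setminus\{s\}}\subseteq W_{e_s}$, which is finite. Their other edges are edges of $\Gcal$ with the same facet sets.

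In (3), the irreducibility argument is wrong as written. The truncation facet makes right angles with every facet of $\delta$, so its new Cartan entries against $\delta$ are zero, not nonzero. In any case the lemma concerns $\Gcal_i$, whose Cartan matrix is just the principal submatrix $A_{S_i}$.

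The correct argument runs as follows. If $W_{S_i}$ were reducible, then, since the diagram of $\delta$ is connected, some $r\in S_i\setminus\delta$ would commute with all of $\delta$. Each $W_{(\delta\setminus\{s\})\cup\{r\}}$ is then finite. By Vinberg's theorem (finite $W_{S'}$ implies $\bigcap_{s\in S'}s$ is a face of codimension $|S'|$), $r$ meets every $e_s$ in a vertex. This forces $\Gcal_i$ to be the simplex $\delta\cup\{r\}$, so $\Gcal_i^{\dagger v_i}$ is the useless prism with group $W_\delta\times\tilde A_1$, contradicting essentiality.

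Largeness then needs no prism argument. An irreducible $W_{S_i}$ containing the infinite proper standard subgroup $W_\delta$ is neither spherical nor affine, because irreducible affine groups have all proper standard subgroups finite.
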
 

\subsection{Deformation spaces of Coxeter truncation polytopes}

Let $\Gcal$ be an irreducible, large, $2$-perfect labeled polytope of dimension $d \ge 3$, with facet set $S$, and fix an essential prismatic circuit $\delta$ of $\Gcal$. Splitting $\Gcal$ along $\delta$ produces two labeled polytopes $\Gcal_1^{\dagger v_1}$ and $\Gcal_2^{\dagger v_2}$. For $i=1,2$, let $S_i \subset S$ denote the subset of facets of $\Gcal$ corresponding to the facets of $\Gcal_i^{\dagger v_i}$. Then $S = S_1 \cup S_2$ and $S_1 \cap S_2 = \delta.$ We now define the splitting map on deformation spaces.

\begin{definition}\label{def:4.4}
Let $\Gcal$ be an irreducible, $2$--perfect, large labeled $d$--polytope $(d\ge 3)$ with facet set $S$,
and let $\delta$ be an essential prismatic circuit.
Suppose that splitting $\Gcal$ along $\delta$ yields $\Gcal_1^{\dagger v_1}$ and $\Gcal_2^{\dagger v_2}$, and write
$S=S_1\cup S_2$ with $\delta=S_1\cap S_2$.

For $[P]\in \Ccal(\Gcal)$, choose a representative
\begin{equation*}
P=\bigcap_{s\in S}\; \S\bigl(\{x\in V \mid \alpha_s(x)\le 0\}\bigr).
\end{equation*}
For $i=1,2$, set
\begin{equation*}
P_i:=\bigcap_{s\in S_i}\; \S\bigl(\{x\in V \mid \alpha_s(x)\le 0\}\bigr),
\end{equation*}
so that $P=P_1\cap P_2$.
Let $v_i$ be the vertex of $P_i$ determined by the cut along $\delta$, and let $P_i^{\dagger v_i}$ denote the
truncation of $P_i$ at $v_i$.

We define the \emph{splitting map}
\begin{equation*}
\mathrm{Cut}_\delta:\Ccal(\Gcal)\longrightarrow \Ccal(\Gcal_1^{\dagger v_1})\times \Ccal(\Gcal_2^{\dagger v_2})
\end{equation*}
by
\begin{equation*}
\mathrm{Cut}_\delta([P]) := \bigl([P_1^{\dagger v_1}],\,[P_2^{\dagger v_2}]\bigr).
\end{equation*}
\end{definition}

\begin{remark}\label{rem:4.5}
The map $\mathrm{Cut}_\delta$ is well-defined. Indeed, for each $[P]\in\Ccal(\Gcal)$, the subspace
$\Pi_\delta\subset V$ spanned by $(b_s)_{s\in\delta}$ is a hyperplane, and $\S(\Pi_\delta)$ meets the relative interior of each edge in the prismatic poset associated with $\delta$ in exactly one point \cite[Lem.~8.19]{Marquis17_CoxeterHilbert}. Hence the vertices $v_i$ are truncatable, which implies that $P_i^{\dagger v_i}\in \Ccal(\Gcal_i^{\dagger v_i})$. Moreover, the links $(P_1)_{v_1}$ and $(P_2)_{v_2}$ are isomorphic.
\end{remark}

For $i=1,2$, let $\delta_i$ denote the set of facets of $\Gcal_i$ incident to the vertex $v_i$. By Lemma~\ref{lem:4.3}, the Coxeter group $W_\delta$ is either Lann\'er or of affine type $\widetilde{A}_{d-1}$. Therefore $W_{\delta_i}=W_\delta$ is a perfect $(d-1)$-simplex, and hence it is of either tree type or cycle type. 

In the tree-type case, there is no relevant circuit, and we set $ R_{\delta_i}=0.$ In the cycle-type case, there are exactly two relevant circuits, namely an oriented circuit $C_{\delta_i}$ and its opposite oriented circuit $\overline{C}_{\delta_i}$. By choosing the orientation of $C_{\delta_i}$ appropriately, we may arrange that $ R_{\delta_1}\bigl([P_1^{\dagger v_1}]\bigr) = R_{\delta_2}\bigl([P_2^{\dagger v_2}]\bigr).$
This motivates the following compatibility condition:
\begin{equation*}
    \Ccal(\Gcal_1^{\dagger v_1}) \boxtimes_\delta \Ccal(\Gcal_2^{\dagger v_2}):= \left\{\bigl([P_1^{\dagger v_1}],[P_2^{\dagger v_2}]\bigr)\middle|R_{\delta_1}\bigl([P_1^{\dagger v_1}]\bigr)=R_{\delta_2}\bigl([P_2^{\dagger v_2}]\bigr)\right\}.
\end{equation*}

The next lemma identifies the fibers of the splitting map with bending orbits. More precisely, once the two truncated pieces are fixed, the remaining freedom in reconstructing the original polytope is given by a one--parameter bending deformation along the cut.

\begin{lemma}[{\cite[Lem~4.36]{Marquis10_EspacePolyedres}} and {\cite[Lem~5.5]{ChoiLeeMarquis22}}]\label{lem:4.6}
Let $\Gcal$ be an irreducible, large, $2$--perfect labeled polytope of dimension $d\ge3$ and $\delta$ an essential prismatic circuit of $\Gcal$. Suppose $\Gcal$ splits along $\delta$ into $\Gcal_1^{\dagger v_1}$ and $\Gcal_2^{\dagger v_2}$. Then there exists an $\R$--action $\Psi$ on $\Ccal(\Gcal)$ such that $\mathrm{Cut}_\delta$ is a $\Psi$--invariant fibration onto $\Ccal(\Gcal_1^{\dagger v_1})\boxtimes_{\delta}\Ccal(\Gcal_2^{\dagger v_2})$ and $\Psi$ is simply transitive on each fiber of $\mathrm{Cut}_\delta$.
\end{lemma}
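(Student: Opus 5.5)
The plan is to write down the bending action explicitly on Cartan matrices and then read off the two claims: that $\mathrm{Cut}_\delta$ lands in $\Ccal(\Gcal_1^{\dagger v_1})\boxtimes_\delta\Ccal(\Gcal_2^{\dagger v_2})$ and surjects onto it, and that $\Psi$ is simply transitive on the fibres.

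\textbf{Step 1: the action.} Fix $[P]\in\Ccal(\Gcal)$ with polars $(b_s)$ and forms $(\alpha_s)$. By Remark~\ref{rem:4.5}, $\Pi_\delta=\operatorname{Span}\{b_s: s\in\delta\}$ is a hyperplane.
\begin{itemize}
\item Choose the unique (up to scale) linear form $\beta$ with $\ker\beta=\Pi_\delta$.
\item Choose a vector $w$ with $\alpha_s(w)=0$ for all $s\in\delta$. Such a $w$ exists because the $\alpha_s$, $s\in\delta$, span a $d$-dimensional space, since $v$ is a simple vertex.
\item For $t\in\R$, set $g_t=\mathrm{Id}+(e^t-1)\,\frac{w\otimes\beta}{\beta(w)}$.
\end{itemize}
This $g_t$ fixes $\Pi_\delta$ pointwise, scales $w$ by $e^t$, and commutes with every $\sigma_s$, $s\in\delta$. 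One needs $\beta(w)\neq0$; I expect this to follow from irreducibility of the restriction to $W_\delta$, using Lemma~\ref{lem:4.3}.

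Define $\Psi_t[P]$ by keeping $(\alpha_s,b_s)$ for $s\in S_1$ and replacing them by $(\alpha_s\circ g_t^{-1},\,g_tb_s)$ for $s\in S_2\setminus\delta$. On Cartan matrices the effect is as follows.
\begin{itemize}
\item Entries inside $S_1$ or inside $S_2$ are unchanged.
\item An entry $a_{st}$ with $s\in S_1\setminus\delta$ and $t\in S_2\setminus\delta$ must be zero, since such facets are non-adjacent because $\delta$ is prismatic. So these stay zero.
\end{itemize}
Hence only the matrix entries change, and the cyclic products through the cut change by factors $e^{\pm t}$. The new matrix is again a Cartan matrix of negative type and rank $d+1$. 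By Theorem~\ref{thm:2.8} it determines a polytope; that this polytope realizes $\Gcal$ follows from Marquis' gluing argument \cite[Lem.~8.19]{Marquis17_CoxeterHilbert}.

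\textbf{Step 2: invariance and the image.} Since the $S_i$-blocks are unchanged, $\mathrm{Cut}_\delta$ is $\Psi$-invariant. The image lies in the $\boxtimes_\delta$ set because both links are $P\cap\Pi_\delta$, by Remark~\ref{rem:4.5}.

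For surjectivity, take a compatible pair. Conjugate so that the two restrictions to $W_\delta$ agree; this is possible because, with equal $R_\delta$, Theorem~\ref{thm:2.13} applied to the simplex link gives conjugate links. Then glue the two half-spaces across $\Pi_\delta$ and apply Vinberg's criterion for the union.

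\textbf{Step 3: simple transitivity.} Two points in the same fibre have Cartan matrices agreeing on both blocks, after independent diagonal rescaling on each side that agree on $\delta$. Their difference is therefore an element of the centralizer of $\rho(W_\delta)$ that fixes $\Pi_\delta$. Because $\rho|_{W_\delta}$ acts irreducibly on $\Pi_\delta$, that centralizer is $\{g_t\}$ times scalars. This gives transitivity.

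Freeness comes from a relevant circuit crossing $\delta$. Such a circuit exists because $\delta$ is essential, so both sides are irreducible and large. Its cyclic product is multiplied by $e^{t}$, so it varies strictly, and $\Psi_t[P]\neq[P]$ for $t\neq0$ by Theorem~\ref{thm:2.13}.

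The main obstacle is this last point. I need to show that a cross-cut circuit with nonzero product genuinely exists, and that negativity of the rescaling keeps the bent polytope properly convex for all $t$, not just for small $t$. I would first try to obtain global validity from the truncatability criterion (Corollary~\ref{cor:3.7}), which does not depend on $t$.
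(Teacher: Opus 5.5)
\textbf{There is a genuine gap in your freeness argument.} Your setup matches the paper's. Projectively, your $g_t$ is the paper's $g_u=\operatorname{diag}(e^u,\dots,e^u,e^{-du})$, and $\Psi_t[P]=[P_1\cap g_t(P_2)]$. The $\Psi$-invariance of $\mathrm{Cut}_\delta$ and transitivity via the centralizer of $\rho(W_\delta)$ go through as you describe. The problem is Step~1's description of the Cartan matrix, on which freeness rests.

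For $s\in S_1\setminus\delta$ and $s'\in S_2\setminus\delta$, the entries $a_{ss'}$ and $a_{s's}$ are \emph{not} zero. If they were, $W_{\{s,s'\}}$ would be finite. Vinberg's theory (a finite standard subgroup $W_{S'}$ forces $\bigcap_{s\in S'}s$ to be a face of codimension $|S'|$) would then make $s\cap s'$ a ridge. So non-adjacent facets satisfy $a_{ss'}a_{s's}\ge 4$, and these pairs are exactly the $\infty$-edges of the diagram that cross $\delta$.

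Worse, in your own normalization every entry with both indices in $S_1$, or both in $S_2$, is literally unchanged, because $\alpha_s\circ g_t=\alpha_s$ and $g_tb_s=b_s$ for $s\in\delta$. So if the cross entries vanished, you would get $A_{\Psi_t[P]}=A_P$, hence $\Psi_t=\mathrm{id}$ by Theorem~\ref{thm:2.13}. That is the opposite of freeness. Your claim that cyclic products through the cut are multiplied by $e^{\pm t}$ has nothing behind it: any circuit that uses no cross entry is $\Psi$-invariant.

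\textbf{What actually happens.} Write $b_{s'}=u_{s'}+c_{s'}w$ with $u_{s'}\in\Pi_\delta$. The bent cross entries are then $\alpha_s(u_{s'})+e^{t}c_{s'}\alpha_s(w)$ and $\alpha_{s'}(u_s)+e^{-t}c_s\alpha_{s'}(w)$. They are affine in $e^{\pm t}$, not rescaled.

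The paper uses a relevant circuit $C=(s_\ell,s_1,\dots,s_j,s_r)$. It runs from $s_\ell\in S_1\setminus\delta$ through facets of $\delta$ to $s_r\in S_2\setminus\delta$, and closes through the single cross entry $a_{s_rs_\ell}$. Such a circuit exists because the diagrams of $W_{S_1}$, $W_{S_2}$ and $W_\delta$ are connected and the cross edge is automatic. In your parametrization its two oriented products have the form $K_1(x_1+e^{-t}y_1)$ and $K_2(x_2+e^{t}y_2)$. The substantive point is that $x_1,x_2,y_1,y_2>0$ in a basis adapted to $\Pi_\delta$ and the $\delta$-cone. This makes $R_C$ strictly monotone in $t$, which is freeness.

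The same observation removes your worry about validity for all $t$, without going through Theorem~\ref{thm:2.8}. The cross entries keep their sign for every $t$. Geometrically, $P_1\cap g_t(P_2)$ is just $P_1^{\dagger v_1}$ glued to $g_t(P_2^{\dagger v_2})$ along $P\cap\S(\Pi_\delta)$, since $g_t$ is trivial on $\S(\Pi_\delta)$ and preserves each hyperplane of $\delta$.

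A minor point: $\beta(w)\neq0$ needs no irreducibility argument. It just says $v\notin\S(\Pi_\delta)$, i.e.\ the Cartan matrix of $\delta$ is nonsingular, which is the truncatability recorded in Remark~\ref{rem:4.5}.
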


\begin{remark}\label{rem:4.7}
We include a proof for completeness. Our argument follows the proofs of \cite[Lem~5.5]{ChoiLeeMarquis22} and \cite[Lem~4.36]{Marquis10_EspacePolyedres}, with a minor modification in the choice of a relevant tuple: when necessary, we use $(s_\ell,s_1,\dots,s_j,s_r)$ rather than only $(s_\ell,s_1,s_r)$.
\end{remark}

\begin{proof}
Choose coordinates adapted to the splitting along $\delta$. Given $[P] \in \Ccal(\Gcal)$, cut $P$ along $\delta$ to obtain two polytopes $P_1^{\dagger v_1}$ and $P_2^{\dagger v_2}$ such that $P=P_1\cap P_2$ and $\mathrm{Cut}_\delta([P])=\bigl([P_1^{\dagger v_1}], [P_2^{\dagger v_2}]\bigr).$

Let $\{e_1,\dots,e_{d+1}\}$ be the standard basis of $\R^{d+1}$, and let $\{e_1^*,\dots,e_{d+1}^*\}$ be the corresponding dual basis. Up to a change of basis, we may assume that the supporting hyperplanes of the facets in $\delta$ are given by $\{\S(\ker(e_i^*))\}_{i=1}^d$, and that the subspace $\Pi_\delta^P$ spanned by the vectors $(b_s)_{s\in\delta}$ is equal to $\ker(e_{d+1}^*)$.

For each $u\in\R$, define $g_u:=\operatorname{diag}(e^u,\dots,e^u,e^{-du}) \in \SL^\pm(d+1,\R)$. We define an $\R$--action $\Psi$ on $\Ccal(\Gcal)$ by
\begin{equation*}
\Psi_u([P]) := [P_1\cap g_u(P_2)].
\end{equation*}
Since the class $[P_2] \in \Ccal(\Gcal_2)$ is invariant under conjugation by $g_u$, the point $\Psi_u([P])$ lies in the same fiber of $\mathrm{Cut}_\delta$ as $[P]=\Psi_0([P])$. Hence $\Psi_u$ preserves each fiber of $\mathrm{Cut}_\delta$.

Let $s_1, s_2, \dots, s_d$ be the facets contained in the circuit $\delta$. We can choose a facet $s_\ell$ of $\Gcal_1$ and a facet $s_r$ of $\Gcal_2$, both not in $\delta$. By rearranging indices, there exists a subset $\{1, \dots, j\}$ in $\{1,\dots,d\}$ such that $(s_\ell,s_{1},\dots,s_{j},s_r)$ is relevant.

After another change of basis, we may assume that the facet $s_\ell$ is defined by the functional $-e_1^*-e_2^*-\cdots-e_{d+1}^*$, and that the facet $s_r$ is defined by $-\lambda_1e_1^*-\lambda_2e_2^*-\cdots -\lambda_de_d^*+\lambda_{d+1}e_{d+1}^*$,
where $\lambda_i>0$ for all $i=1,\dots,d+1$.

For $i=1,\dots,d$, let $\sigma_i:=\sigma_{s_i}=\mathrm{Id}-\alpha_{s_i}\otimes b_{s_i}, \alpha_i:=\alpha_{s_i}=-e_i^*.$ Let $\alpha_{d+1}$ denote the functional defining the cutting hyperplane $\Pi_\delta$. We choose its sign so that $\alpha_{d+1}=-e_{d+1}^*$. Let $u_{s_\ell}$ and $w_{s_\ell}$ be the projections of $b_{s_\ell}$ onto $\ker(e_{d+1}^*)$ and $\operatorname{Span}(e_{d+1})$, respectively. Similarly, let $u_{s_r}$ and $w_{s_r}$ be the corresponding projections of $b_{s_r}$.

We use superscripts to denote the data associated with $\Psi_u([P])$. Then the functionals are given by
\begin{equation*}
    \begin{aligned}
\alpha_{s_\ell}^u &= \alpha_1 + \alpha_2 + \cdots+ \alpha_{d+1}, \\
\alpha_{i}^u &= \alpha_{i} \quad \text{for } i = 1, 2, \ldots,d \\
\alpha_{s_r}^u &= e^{-u}(\lambda_1 \alpha_1 + \lambda_2 \alpha_2 + \cdots + \lambda_d \alpha_d) - e^{du} \lambda_{d+1} \alpha_{d+1}.\\
\end{aligned}
\end{equation*}

Similarly, the polars are also given by 
\begin{equation*}
    \begin{aligned}
b_{s_\ell}^u &= u_{s_\ell}+w_{s_\ell}, \\
b_{i}^u &= b_{i} \quad \text{ for } i = 1, 2, \ldots,d \\
b_{s_r}^u &= e^{u}u_{s_r} + e^{-du} w_{s_r}.\\
\end{aligned}
\end{equation*}

Now let $C=(s_\ell,s_{i_1},\dots,s_{i_j},s_r)$. A direct computation shows that the two cyclic products corresponding to $C$ and $\overline C$ are
\begin{equation*}
    \begin{aligned}
&\alpha_{s_\ell}^u(b_1^u)\alpha_1^u(b_2^u)\dots\alpha_j^u(b_{s_r}^u)\alpha_{s_r}^u(b_{s_\ell}^u)=K_1(x_1+e^{(d+1)u}y_1)\text{ and }\\ &\alpha_{s_\ell}^u(b_{s_r}^u)\alpha_{s_r}^u(b_j^u)\cdots\alpha_2(b_{1}^u)\alpha_{1}^u(b_{s_\ell}^u)=K_2(x_2+e^{-(d+1)u}y_2)
\end{aligned}
\end{equation*}

where

\begin{equation*}
\begin{aligned}
    K_1 & = -\alpha_{s_\ell}(b_1) \cdots \alpha_{j}(b_{s_r})\\
    x_1 & = -(\lambda_1 \alpha_1 + \lambda_2 \alpha_2 +\cdots + \lambda_d \alpha_d)(u_{s_\ell})\\
    y_1 & = \lambda_{d+1} \alpha_{d+1}(w_{s_\ell})\\
    K_2 & = -\alpha_{s_r}(b_j) \cdots \alpha_{1}(b_{s_\ell})\\
    x_2 & =- (\alpha_1 + \alpha_2 + \cdots+ \alpha_{d+1})(u_{s_r})\\
    y_2 & = - \alpha _{d+1}(w_{s_r}).
\end{aligned}
\end{equation*}

Observe that both $K_1$ and $K_2$ have the same sign. By the choice of an adapted basis, the quantities $x_1,x_2,y_1$, and $y_2$ are all positive.

Let $A^u$ be the Cartan matrix of $\Psi_u([P])$. Define a map $R:\R\to\R$ by
\begin{equation*}
u \longmapsto R_C(A^u)=\log\!\left(\frac{K_1(x_1+e^{(d+1)u}y_1)}{K_2(x_2+e^{-(d+1)u}y_2)}\right).
\end{equation*}
Since $x_1,x_2,y_1,y_2>0$ and $K_1/K_2 >0$, the map $R$ is a homeomorphism. In particular, the action of $\Psi$ on each fiber of $\mathrm{Cut}_\delta$ is free.

Now suppose that $[P],[Q]\in\Ccal(\Gcal)$ satisfy
$\mathrm{Cut}_\delta([P])=\mathrm{Cut}_\delta([Q])$,
with $P=P_1\cap P_2$ and $Q=Q_1\cap Q_2$. Then we may assume that $Q_1=P_1$ and $Q_2=g(P_2)$ for some $g\in\SL^\pm(d+1,\R)$. Since the $d$ facets in $\delta$ are common to both pieces, the element $g$ commutes with the corresponding reflections $\sigma_1,\dots,\sigma_d$. Moreover, $g$ acts trivially on $\S(\Pi_\delta)$. Equivalently, it fixes $\Pi_\delta$ pointwise in projective space. It follows that $g$ lies in the one--parameter centralizer of $\langle \sigma_1,\dots,\sigma_d\rangle$, and hence
\begin{equation*}
g=g_\lambda:=\operatorname{diag}(\lambda,\dots,\lambda,\lambda^{-d})
\qquad \text{for some } \lambda>0.
\end{equation*}
Therefore $[Q]=\Psi_{\log\lambda}([P])$, so $\Psi$ is transitive on each fiber.

Finally, given $\bigl([P_1^{\dagger v_1}],[P_2^{\dagger v_2}]\bigr) \in \Ccal(\Gcal_1^{\dagger v_1})\boxtimes_{\delta}\Ccal(\Gcal_2^{\dagger v_2})$, we define $P=P_1\cap P_2$. Let $\delta_i$ be the set of facets of $\Gcal_i$ containing the vertex $v_i$. Then we obtain $R_\delta\bigl([P]\bigr)=R_{\delta_1}\bigl([P_1^{\dagger v_1}]\bigr)=R_{\delta_2}\bigl([P_2^{\dagger v_2}]\bigr)$ and so $\mathrm{Cut}_\delta$ is surjective onto $\Ccal(\Gcal_1^{\dagger v_1})\boxtimes_{\delta}\Ccal(\Gcal_2^{\dagger v_2})$.

\end{proof}

\begin{theorem}[{\cite[Thm~A]{ChoiLeeMarquis22}}\label{thm:4.8}, {\cite{Marquis10_EspacePolyedres}}]
    Let $\Gcal$ be an irreducible, large, $2$--perfect labeled truncation polytope of dimension $d\ge3$ and let $\Ccal(\Gcal)$ be the deformation space of $\Gcal$. Assume that $\Ccal(\Gcal)$ is nonempty. Then the following hold:
    \begin{itemize}
        \item the dimension $d\le9$.
        \item $\Ccal(\Gcal)$ is a union of finitely many open cells of dimension $e^+-d$ where $e^+$ is the number of non--right--angled ridges.
        \item $\Gcal$ admits a hyperbolic structure if and only if $\Ccal(\Gcal)$ is connected.
    \end{itemize}
    \end{theorem}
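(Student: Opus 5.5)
We prove Theorem~\ref{thm:4.8} by induction on the number of truncations, that is, on the number of facets of $\Gcal$, using the splitting map of Definition~\ref{def:4.4} and the fibration of Lemma~\ref{lem:4.6}. The strategy follows \cite{ChoiLeeMarquis22} and \cite{Marquis10_EspacePolyedres}: cut the deformation space into pieces with known structure, then reassemble.

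\emph{Base cases.} When $\Gcal$ is a simplex, Theorems~\ref{thm:3.2} and~\ref{thm:3.4} give that $\Ccal(\Gcal)$ is an open cell of dimension $e_+-d$. When $\Gcal$ is a once truncated simplex, Corollary~\ref{cor:3.8} identifies $\Ccal(\Gcal)$ with the subset $\Ccal(\Gcal_0)^{\dagger\Vcal}$ of the simplex deformation space $\Ccal(\Gcal_0)$. This subset is obtained by removing the hyperplanes $R_{C_v}=0$ for the $\widetilde A_{d-1}$ vertices. Each such hyperplane is a coordinate hyperplane in the normalized cyclic product coordinates, so we are removing finitely many hyperplanes from a cell. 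The result is a finite union of open cells, and the dimension count is preserved because the new ridges are right-angled.

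\emph{Inductive step.} Assume $\Gcal$ is not a once truncated simplex. Then the dual of $\Gcal$ (a stacked polytope) contains an interior $(d-1)$-simplex, so $\Gcal$ has a prismatic circuit $\delta$. If $\delta$ is non-essential, splitting along it only removes a useless piece, and we may replace $\Gcal$ by the smaller piece without changing the deformation space. So we may assume $\delta$ is essential. Splitting gives $\Gcal_1^{\dagger v_1}$ and $\Gcal_2^{\dagger v_2}$, each with fewer facets. By Lemma~\ref{lem:4.3}, both are irreducible, large and $2$-perfect truncation polytopes, so the induction hypothesis applies to each. By Lemma~\ref{lem:4.6}, $\mathrm{Cut}_\delta$ is a fibration onto $\Ccal(\Gcal_1^{\dagger v_1})\boxtimes_\delta\Ccal(\Gcal_2^{\dagger v_2})$ whose fibers are $\R$-orbits. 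Hence $\Ccal(\Gcal)$ is homeomorphic to this fiber product times $\R$.

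\emph{Dimension count.} If $W_\delta$ is of tree type, the fiber product is the full product. If it is of cycle type, it is cut out by the single equation $R_{\delta_1}=R_{\delta_2}$, which is a coordinate condition by Corollary~\ref{cor:3.7}, so each cell of the product meets it in a cell of one lower dimension. Write $n_\delta$ for the number of non-right-angled ridges among the facets of $\delta$; these ridges are counted in both pieces. The new truncation ridges are right-angled. So the dimension is
\[
(e_+^{(1)}-d)+(e_+^{(2)}-d)-\epsilon+1,
\]
with $\epsilon\in\{0,1\}$ according to type. One then checks this equals $e_+-d$ using $e_+=e_+^{(1)}+e_+^{(2)}-n_\delta$, with $n_\delta=d-1$ for tree type and $n_\delta=d$ for cycle type.

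\emph{Bound $d\le 9$.} A vertex link must be Lann\'er or $\widetilde A_{d-1}$, and the underlying simplex must be $2$-Lann\'er. The $2$-Lann\'er groups are classified in \cite{Maxwell82}, and none exists with $d>9$, which bounds the dimension.

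\emph{Connectedness and hyperbolicity.} The removed loci $R_{C_v}=0$ split the space into components. If a hyperbolic structure exists, all $R$'s vanish there, so these $\widetilde A$ vertices are cusps. Otherwise, in the compact hyperbolic case no $\widetilde A$ vertex occurs and no hyperplanes are removed. Since each step is a product with $\R$ over a connected base, $\Ccal(\Gcal)$ is connected. For the converse, connectedness forces no hyperplane to be removed, and the hyperbolic point then lies in the cell by Andreev/Vinberg existence.

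\emph{Main obstacle.} The hardest step is the fiber product in the cycle case. One must show that $R_{\delta_i}$ is a submersion on each cell of $\Ccal(\Gcal_i^{\dagger v_i})$, so that the level set is again a cell rather than something singular. I would first try to prove this by exhibiting $R_{\delta_i}$ as one coordinate in the inductive parametrization of each piece.
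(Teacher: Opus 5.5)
The paper does not prove this statement; it quotes it from \cite{ChoiLeeMarquis22} and \cite{Marquis10_EspacePolyedres}, remarking only that the proof cuts a truncation polytope along prismatic circuits into once-truncated simplices. That is the route you take. As written, though, your argument has a genuine gap, exactly at the step you call the main obstacle, and all three bullets depend on it.

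Your inductive hypothesis only says that each $\Ccal(\Gcal_i^{\dagger v_i})$ is a finite union of open cells of dimension $e_+^{(i)}-d$. It says nothing about the function $R_{\delta_i}$ on those cells. A continuous function on a cell may be constant or have critical points. Then $\{R_{\delta_1}=R_{\delta_2}\}$ inside a product of two cells can be empty, of the wrong dimension, or not a cell, so neither the cell decomposition nor the count $e_+-d$ follows. Corollary~\ref{cor:3.7} does not supply this property: it says that an $\widetilde A_{d-1}$ vertex is truncatable iff $R_{C_v}\neq 0$, not that $R_{C_v}$ is a coordinate.

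The same missing property is used silently twice more.
\begin{enumerate}
\item In the reduction along a non-essential circuit, the deformation space is \emph{not} unchanged when the cut-off vertex $v$ is of type $\widetilde A_{d-1}$. By Corollary~\ref{cor:3.8} it becomes $\Ccal(\Gcal')\setminus\{R_{C_v}=0\}$. This is a finite union of cells of the same dimension only if $\{R_{C_v}=0\}$ meets each cell of $\Ccal(\Gcal')$ like a coordinate hyperplane.
\item In the last bullet, you need $R_\delta$ to take both signs, so that removing its zero set actually disconnects. You also need the point where all normalized cyclic products vanish (a symmetrizable Cartan matrix, i.e.\ the hyperbolic structure) to lie in $\Ccal(\Gcal)$ when nothing is removed. ``Andreev/Vinberg existence'' does not give this. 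Andreev's theorem is specific to $d=3$. Theorem~\ref{thm:2.8} cannot be applied directly, because the Cartan entries between non-adjacent facets are not determined by the labels.
\end{enumerate}

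What is missing is a stronger inductive statement that keeps track of the link functionals. For instance: in coordinates given by the normalized cyclic products of the simplex pieces, plus one bending parameter per essential circuit, $\Ccal(\Gcal)\cong(L\setminus\mathcal H)\times\R^k$. Here $L$ is a linear subspace, each $R_{C_v}$ of a cycle-type vertex $v$ of a piece is a nonzero linear functional on $L$, and $\mathcal H$ is the union of the hyperplanes $\{R_\delta=0\}$ over the $\widetilde A_{d-1}$ circuits. This property is preserved at each step:
\begin{itemize}
\item For simplices it is what the proofs of Theorems~\ref{thm:3.2} and~\ref{thm:3.4} give.
\item Removing $\{R_{C_v}=0\}$ preserves it.
\item The fiber product preserves it. If $R_{\delta_i}$ is a nonzero functional on $L_i$, then $\{R_{\delta_1}=R_{\delta_2}\}$ is a codimension-one subspace of $L_1\times L_2$ projecting onto each factor. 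Hence every other link functional stays nonzero on it.
\end{itemize}
Cells, dimension and connectedness then follow from elementary facts about central hyperplane arrangements. The hyperbolic point is obtained by gluing the hyperbolic realizations of the pieces at $R=0$.

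Two smaller points, both fixed by inducting on the number of essential prismatic circuits as in the proof of Theorem~\ref{thm:5.9}. First, the number of facets does not drop for $\Gcal_i^{\dagger v_i}$ when $\Gcal_{3-i}$ is a simplex. Second, a labeled polytope that is combinatorially a once-truncated simplex can still carry an essential circuit, for example two truncated $2$-Lann\'er simplices glued along a Lann\'er link. Its ``truncation facet'' is then not right-angled, so Corollary~\ref{cor:3.8} does not apply. Your case split should therefore be ``no essential circuit'' versus ``some essential circuit''.
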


The proof of Theorem~\ref{thm:4.8} is based on the fact that an irreducible, large, $2$-perfect labeled truncation polytope can be decomposed along prismatic circuits into once-truncated simplices.
    
\begin{remark}\label{rem:4.9}
For an irreducible, large, $2$-perfect labeled truncation polytope with nonempty deformation space, $\Ccal(\Gcal)$ is connected if and only if $\Gcal$ has no essential
prismatic circuit whose Coxeter subgroup is of type $\widetilde A_{d-1}$. Equivalently, the existence of such a circuit is precisely the obstruction to connectedness.
\end{remark}

\section{Proof of main theorem}\label{sec:proof}

\subsection{Basic cases}

We first prove the finiteness result for simplices and once-truncated simplices, which serve as the basic cases of our argument. The key point is that integrality forces the relevant cyclic products to lie in a finite set, and hence only finitely many integral representations can occur in the deformation space.

\begin{theorem}\label{thm:5.1}
Let $\Gcal$ be an irreducible, large, $2$-perfect labeled simplex of dimension $d \ge 3$, and let $\Ccal(\Gcal)$ be its deformation space. Then $\Ccal(\Gcal)$ contains only finitely many integral Vinberg representations.
\end{theorem}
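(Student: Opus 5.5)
By Lemma~\ref{lem:2.17}, a point $[P]\in\Ccal(\Gcal)$ gives an integral Vinberg representation exactly when every cyclic product of $A_P$ is an integer. This applies because $\Gcal$ is irreducible and large, so every realization is irreducible and loxodromic by Corollary~\ref{cor:2.15}. By Theorem~\ref{thm:2.13}, $[P]$ is determined by its cyclic products. The strategy is therefore to show that integrality confines the finitely many relevant cyclic products to a finite set of values.

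\textbf{Step 1: reduce to finitely many diagram types.} By Theorem~\ref{thm:3.2} for $d=3$ and Theorem~\ref{thm:3.4} with Remark~\ref{rem:3.3} for $d\ge4$, the Coxeter diagram of $\Gcal$ has one of finitely many shapes. It has no $\infty$-labelled edges, since $2$-perfectness makes the vertex links perfect. Its relevant circuits are the cycles of the underlying graph: triangles and the $4$-cycle when $d=3$, and the cycles of the $5$-cycle, $4$-pan or $K_{2,3}$ when $d\ge4$. In particular there are finitely many relevant circuits, each of bounded length, and the deformation space is parametrized by finitely many normalized cyclic products $R_C(A_P)$.

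\textbf{Step 2: bound each relevant cyclic product.} Fix a relevant circuit $C=(i_1,\dots,i_k)$ and its reverse $\overline C$. Both $C(A_P)$ and $\overline C(A_P)$ are cyclic products, so they are integers. Their product is independent of $[P]$:
\begin{equation*}
C(A_P)\,\overline C(A_P)=\prod_{r=1}^{k} a_{i_r i_{r+1}}a_{i_{r+1} i_r}=\prod_{r=1}^k 4\cos^2\frac{\pi}{m_{i_r i_{r+1}}}=:N_C.
\end{equation*}
The sign of $C(A_P)$ is $(-1)^k$ for every realization, so $C(A_P)$ and $\overline C(A_P)$ are nonzero integers of the same sign whose product is the fixed positive number $N_C$. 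Since $|C(A_P)|\ge 1$ and $|\overline C(A_P)|\ge1$, each satisfies $1\le |C(A_P)|\le N_C$. Hence $C(A_P)$ lies in a finite set of integers, and so does $R_C(A_P)=\log(C(A_P)/\overline C(A_P))$.

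\textbf{Step 3: conclude.} Every non-relevant cyclic product is already fixed by the labels, and each relevant one takes finitely many values by Step 2. Since there are finitely many relevant circuits, Theorem~\ref{thm:2.13} (or directly Proposition~\ref{prop:2.12}) shows that only finitely many classes $[P]$ can have all cyclic products integral. Lemma~\ref{lem:2.17} then gives the finiteness of integral Vinberg representations in $\Ccal(\Gcal)$.

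The main point to check is Step 2: that the product $C(A_P)\overline C(A_P)$ really is a constant determined by the labels. This holds because every consecutive pair along a relevant circuit in this setting is an edge with a finite label, so each factor $a_{ij}a_{ji}$ equals $4\cos^2(\pi/m_{ij})$. For circuits with repeated vertices, such as those passing through the handle of the pan, the same identity still holds factorwise, so I do not expect a real obstruction. A bound like $1\le|C(A_P)|\le N_C$ is exactly what fails for longer circuits involving $\infty$-edges, which is why the simplex case is the easy base case.
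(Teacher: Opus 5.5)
Your proposal is correct and follows essentially the same route as the paper. Integrality forces $C(A_P)$ and $\overline{C}(A_P)$ to be integers whose product is the label-determined constant $\prod 4\cos^2(\pi/m)$, so each relevant cyclic product takes finitely many values, and these finitely many values determine $[P]$. The only cosmetic difference is that the paper first shows all labels lie in $\{2,3,4,6\}$ and then uses divisibility of the integer $M_C$, whereas you bound $1\le |C(A_P)|\le N_C$ directly, which makes that preliminary step unnecessary.
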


\begin{proof}
Let $[P] \in \Ccal(\Gcal)$ be an integral Vinberg representation. By Lemma~\ref{lem:2.17}, every cyclic product of the Cartan matrix $A_P$ is an integer. If $\Gcal$ has an edge labeled by $m \notin \{2,3,4,6\}$, then
\begin{equation*}
a_{ij}a_{ji}=4\cos^2(\pi/m)\notin \Z
\end{equation*}
for some distinct $i,j$, which contradicts the integrality assumption. Hence either $\Ccal(\Gcal)$ contains no integral Vinberg representations, or every edge label of $\Gcal$ belongs to $\{2,3,4,6\}$.

Now let $C=(i_1,\dots,i_k)$ be a relevant circuit of the Coxeter diagram of $\Gcal$, and let $\overline{C}=(i_k,\dots,i_1)$ be the opposite circuit. Since $\Gcal$ is $2$-perfect, no edge of $C$ is labeled by $2$ or $\infty$. Thus, for each consecutive pair $(i_j,i_{j+1})$, we have
\begin{equation*}
a_{i_ji_{j+1}}a_{i_{j+1}i_j}=4\cos^2(\pi/m_j)\in\{1,2,3\}.
\end{equation*}
Rearranging the factors, we obtain
\begin{equation*}
C(A_P)\,\overline{C}(A_P)
=
\prod_{j=1}^k \bigl(a_{i_ji_{j+1}}a_{i_{j+1}i_j}\bigr)
=:M_C,
\end{equation*}
where $M_C$ is a positive integer determined only by the labeling of $\Gcal$.

By Lemma~\ref{lem:2.17} again, both $C(A_P)$ and $\overline{C}(A_P)$ lie in $\Z$. Hence $C(A_P)$ divides $M_C$, and therefore
\begin{equation*}
\overline{C}(A_P)=\frac{M_C}{C(A_P)}.
\end{equation*}
It follows that there are only finitely many possibilities for the pair
\begin{equation*}
\bigl(C(A_P),\overline{C}(A_P)\bigr)\in\Z^2,
\end{equation*}
and hence only finitely many possibilities for the corresponding normalized cyclic product.

Finally, Theorems~\ref{thm:3.2} and~\ref{thm:3.4} show that the deformation space $\Ccal(\Gcal)$ is determined by finitely many normalized cyclic products associated with relevant circuits. Under the integrality assumption, each of these parameters can take only finitely many values. Therefore $\Ccal(\Gcal)$ contains only finitely many integral Vinberg representations.
\end{proof}

\begin{remark}\label{rem:5.2}
Since the integrality condition forces every cyclic product to lie in $\Z$, the number of integral Vinberg representations in $\Ccal(\Gcal)$ is bounded above by the number of possible values of the corresponding normalized cyclic products.
\end{remark}

\begin{remark}\label{rem:5.3}
If $\Gcal$ is a large labeled triangle that is $2$-perfect but not perfect, then $\Ccal(\Gcal)$ contains infinitely many integral Vinberg representations. Indeed, in contrast to the perfect case, $\Gcal$ may admit an edge labeled by $\infty$, which gives rise to an infinite family under the integrality condition.
\end{remark}

\begin{theorem}\label{thm:5.4}
Let $\Scal$ be an irreducible, large, $2$-perfect labeled simplex of dimension $d \ge 3$, and let $\Vcal$ be the set of all Lann\'er or $\widetilde{A}_{d-1}$ vertices of $\Scal$. Then $\Ccal(\Scal)^{\dagger \Vcal}$ admits only finitely many integral Vinberg representations.
\end{theorem}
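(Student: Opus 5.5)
The plan is to reduce to Theorem~\ref{thm:5.1} via Corollary~\ref{cor:3.8}. That corollary identifies $\Ccal(\Scal)^{\dagger\Vcal}$ with the subset of $\Ccal(\Scal)$ on which $R_{C_v}(A_P)\neq 0$ for every $v\in\Vcal\cap\Vcal_A$. Under this identification the truncation $P\mapsto P^{\dagger\Vcal}$ gives a homeomorphism onto $\Ccal(\Scal^{\dagger\Vcal})$. The decisive point is to compare the Cartan matrix of $P^{\dagger\Vcal}$ with that of $P$. The facets of $P^{\dagger\Vcal}$ are the facets $S$ of $\Scal$ together with one new facet $t_v$ for each $v\in\Vcal$. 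Each new facet $t_v$ meets every adjacent old facet at a right angle, and it is disjoint from the others.

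Next I will show that every relevant circuit of the Coxeter diagram of $\Scal^{\dagger\Vcal}$ avoids the new nodes. If the circuit had length at least $3$, it would need a nonzero entry $a_{t_v s}$. That entry vanishes for adjacent $s$ (right angle). For non-adjacent $s$ the pair $(t_v,s)$ is an $\infty$-edge (hyperbolic, since the facets do not meet). So circuits through $t_v$ do appear, and this is where care is needed. I therefore do not claim they are absent. Instead I plan to argue directly with integrality, as follows.

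Suppose $[P^{\dagger\Vcal}]$ is integral. The Coxeter group of $\Scal^{\dagger\Vcal}$ is irreducible and large, since it contains the irreducible large group $W_\Scal$ as a standard subgroup and $t_v$ is connected to it. By Lemma~\ref{lem:2.17}, all cyclic products of $A_{P^{\dagger\Vcal}}$ are integers. The principal submatrix indexed by $S$ is exactly $A_P$, because truncation does not change the old reflections. Hence every cyclic product of $A_P$ is an integer. I then repeat the argument of Theorem~\ref{thm:5.1} verbatim, using only the integrality of the cyclic products of $A_P$ rather than integrality of the representation of $W_\Scal$. The edge labels must lie in $\{2,3,4,6\}$, and for each relevant circuit $C$ of $\Scal$ we have $C(A_P)\overline{C}(A_P)=M_C$ with both factors integers. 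So each normalized cyclic product takes finitely many values. By Theorems~\ref{thm:3.2} and \ref{thm:3.4}, these finitely many parameters determine $[P]\in\Ccal(\Scal)$. Therefore only finitely many $[P]$ occur. Since the map of Corollary~\ref{cor:3.8} is a bijection, only finitely many integral points lie in $\Ccal(\Scal^{\dagger\Vcal})\cong\Ccal(\Scal)^{\dagger\Vcal}$.

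The main obstacle is the bookkeeping in the middle step. I must check that the cyclic products of $A_P$ are literally among those of $A_{P^{\dagger\Vcal}}$ (up to the diagonal equivalence, which preserves cyclic products), and that Lemma~\ref{lem:2.17} applies to the truncated polytope. The latter requires irreducibility and largeness of $\Scal^{\dagger\Vcal}$. If $\Vcal$ is empty, the statement is Theorem~\ref{thm:5.1} itself. If some $t_v$ were disconnected in the diagram, I would instead invoke the lemma on the irreducible large piece $W_\Scal$, whose representation is the restriction of $P^{\dagger\Vcal}$'s to the old reflections.
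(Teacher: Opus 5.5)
Your proof is correct and follows essentially the paper's route. Both arguments reduce to Theorem~\ref{thm:5.1} via Corollary~\ref{cor:3.8}; the paper simply views $\Ccal(\Scal)^{\dagger\Vcal}$ as an open subset of $\Ccal(\Scal)$ (with integrality measured on the standard subgroup $W_\Scal$, per the convention stated before Theorem~\ref{thm:5.5}) and intersects it with the finite integral locus of $\Ccal(\Scal)$. Your extra step, that integrality for $W_{\Scal^{\dagger\Vcal}}$ forces integral cyclic products of the principal submatrix $A_P$, is valid and covers the stronger reading of ``integral''; you could cite Theorem~\ref{thm:5.1} directly rather than re-running it, and the abandoned claim that relevant circuits avoid the new nodes should simply be dropped.
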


\begin{proof}
By Corollary~\ref{cor:3.8}, the deformation space $\Ccal(\Scal)^{\dagger \Vcal}$ is obtained from $\Ccal(\Scal)$ by imposing the additional condition $R_{C_v}(A_P)\neq 0$ for every vertex $v\in V$ of type $\widetilde{A}_{d-1}$. In particular, $\Ccal(\Scal)^{\dagger \Vcal}$ is an open subset of $\Ccal(\Scal)$ obtained by removing finitely many hypersurfaces defined by equations of the form $R_{C_v}(A_P)=0$.

By Theorem~\ref{thm:5.1}, the set of integral Vinberg representations in $\Ccal(\Scal)$ is finite. Intersecting this finite set with the open subset $\Ccal(\Scal)^{\dagger \Vcal}$ still yields a finite set. Therefore $\Ccal(\Scal)^{\dagger \Vcal}$ admits only finitely many integral Vinberg representations.
\end{proof}

\subsection{Nonexistence results}

Let $\Gcal$ be an irreducible, large, $2$-perfect labeled polytope. Before proving the
nonexistence results, we fix a convention that will be used throughout the rest of this
section. Definition~\ref{def:4.4} defines the splitting map $\mathrm{Cut}_\delta$ with values in
$\Ccal(\Gcal_1^{\dagger v_1}) \boxtimes_\delta \Ccal(\Gcal_2^{\dagger v_2})$. Via Corollary~\ref{cor:3.8}, we shall regard its image as lying in $\Ccal(\Gcal_1)^{\dagger v_1} \boxtimes_\delta \Ccal(\Gcal_2)^{\dagger v_2}$ throughout this section. This is the natural viewpoint for integrality, since $W_{\Gcal_i}$ is a standard subgroup
of $W_{\Gcal}$, whereas $W_{\Gcal_i^{\dagger v_i}}$ need not be. 

Let $\Vcal_{\widetilde{A}}$ be the set of all $\widetilde{A}_{d-1}$-vertices of $\Gcal$, and let $\Vcal$ be a set of loxodromic vertices of $\Gcal$. If $\Vcal_{\widetilde{A}} \cap \Vcal \neq \varnothing$, then one obtains the following nonexistence result.

\begin{theorem}\label{thm:5.5}
Let $\Gcal$ be an irreducible, large, $2$-perfect labeled polytope of dimension $d \ge 3$. Let $\Vcal_{\widetilde{A}}$ be the set of all $\widetilde{A}_{d-1}$ vertices of $\Gcal$, and let $\Vcal$ be a set of loxodromic vertices of $\Gcal$. If $\Vcal_{\widetilde{A}}\cap \Vcal\neq \varnothing$, then $\Ccal(\Gcal)^{\dagger \Vcal}$ admits no integral Vinberg representation.
\end{theorem}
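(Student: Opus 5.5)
Take $v\in\Vcal_{\widetilde{A}}\cap\Vcal$ and suppose, for contradiction, that some $[P]\in\Ccal(\Gcal)^{\dagger\Vcal}$ gives an integral Vinberg representation. The plan is to force the normalized cyclic product $R_{C_v}(A_P)$ to vanish, which violates the defining condition $R_{C_v}(A_P)\neq 0$ of $\Ccal(\Gcal)^{\dagger\Vcal}$ (Corollary~\ref{cor:3.8}).

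\emph{Step 1: integral cyclic products.} Since $\Gcal$ is irreducible and large, Lemma~\ref{lem:2.17} applies, so every cyclic product of $A_P$ lies in $\Z$. In particular, for the relevant circuit $C_v=(i_1,\dots,i_{d})$ of the link $W_{\Gcal_v}\cong\widetilde A_{d-1}$ (a cycle on the $d$ facets through $v$), both $C_v(A_P)$ and $\overline{C}_v(A_P)$ are integers. They are positive, since every factor $a_{ij}$ with $i,j$ adjacent in the cycle is negative and the circuit has even-sign product up to the common factor $(-1)^d$ for both.

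\emph{Step 2: the product identity.} In type $\widetilde A_{d-1}$ every edge of the cycle carries the label $3$. Hence
\begin{equation*}
C_v(A_P)\,\overline{C}_v(A_P)=\prod_{j=1}^{d} a_{i_ji_{j+1}}a_{i_{j+1}i_j}=\bigl(4\cos^2(\pi/3)\bigr)^{d}=1 .
\end{equation*}
Two integers with the same sign whose product is $1$ must both equal $1$ (for $d$ odd both are $-1$). Therefore $C_v(A_P)=\overline{C}_v(A_P)$, and so
\begin{equation*}
R_{C_v}(A_P)=\log\bigl(C_v(A_P)/\overline{C}_v(A_P)\bigr)=0 .
\end{equation*}
This contradicts $v\in\Vcal\cap\Vcal_{\widetilde A}$ and $[P]\in\Ccal(\Gcal)^{\dagger\Vcal}$.

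\emph{Main obstacle.} The only delicate point is the sign bookkeeping in Step 2. The two cyclic products have the same sign $(-1)^d$, and their product equals $(4\cos^2(\pi/3))^d=1$ exactly, with no extra sign. I would verify the sign convention carefully, but it only affects whether both products equal $1$ or both equal $-1$; in either case their ratio is $1$.

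A second point to check is that the cyclic products of the link coincide with the cyclic products of $A_P$ on the facets $S_v$, which is immediate because the link's Cartan matrix is the principal submatrix of $A_P$ indexed by $S_v$.
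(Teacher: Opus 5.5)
Your proof is correct and is essentially the paper's argument: Lemma~\ref{lem:2.17} makes both cyclic products integers, and the identity $C_v(A_P)\,\overline{C}_v(A_P)=(4\cos^2(\pi/3))^d=1$ then forces $(C_v(A_P),\overline{C}_v(A_P))\in\{(1,1),(-1,-1)\}$, hence $R_{C_v}(A_P)=0$, contradicting Corollary~\ref{cor:3.8}. The only blemish is the Step~1 claim that both products are positive (their sign is $(-1)^d$), but as you note this is harmless, since two integers whose product is $1$ are automatically equal.
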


\begin{proof}
Suppose for contradiction that there exists $[P]\in \Ccal(\Gcal)^{\dagger \Vcal}$ admitting an integral Vinberg representation. Choose a vertex $v\in \Vcal_{\widetilde{A}}\cap \Vcal$. Then the link $\Gcal_v$ is of type $\widetilde{A}_{d-1}$, and hence its Coxeter group $W_{\Gcal_v}$ is affine of type $\widetilde{A}_{d-1}$. Since $\widetilde{A}_{d-1}$ is of cycle type, we may take the two oriented relevant circuits
\begin{equation*}
C=(1,2,\dots,d)
\qquad\text{and}\qquad
\overline{C}=(d,d-1,\dots,1).
\end{equation*}

Since every edge of $C$ has label $3$, we have
\begin{equation*}
a_{i,i+1}a_{i+1,i}=1
\qquad (i=1,\dots,d-1),
\qquad
a_{d,1}a_{1,d}=1.
\end{equation*}
Therefore
\begin{equation*}
C(A_P)\,\overline{C}(A_P)=1.
\end{equation*}
By integrality, both $C(A_P)$ and $\overline{C}(A_P)$ are integers. Hence
\begin{equation*}
\bigl(C(A_P),\overline{C}(A_P)\bigr)\in\{(1,1),(-1,-1)\}.
\end{equation*}
It follows that
\begin{equation*}
R_C(A_P)=\log\!\left(\frac{C(A_P)}{\overline{C}(A_P)}\right)=0.
\end{equation*}
This contradicts Corollary~\ref{cor:3.8}, which asserts that $R_C(A_P)\neq 0$ for every $[P]\in \Ccal(\Gcal)^{\dagger \Vcal}$. Therefore $\Ccal(\Gcal)^{\dagger \Vcal}$ admits no integral Vinberg representation.
\end{proof}

When $\Gcal$ has an essential prismatic circuit $\delta$ whose Coxeter subgroup is of type $\widetilde{A}_{d-1}$, we obtain a similar nonexistence statement.

\begin{theorem}\label{thm:5.6}
Let $\Gcal$ be an irreducible, large, $2$-perfect labeled polytope of dimension $d \ge 3$, and let $\delta$ be an essential prismatic circuit. If the Coxeter group $W_\delta$ is of type $\widetilde{A}_{d-1}$, then $\Ccal(\Gcal)$ admits no integral Vinberg representation.
\end{theorem}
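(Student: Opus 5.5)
The plan is to argue by contradiction along the lines of Theorem~\ref{thm:5.5}. Suppose $[P]\in\Ccal(\Gcal)$ has an integral Vinberg representation. Since $\Gcal$ is irreducible and large, Lemma~\ref{lem:2.17} applies, so every cyclic product of $A_P$ is an integer. The facets of $\delta$ form a standard subset $\delta\subset S$, and the principal submatrix $A_\delta:=(a_{st})_{s,t\in\delta}$ is the Cartan matrix of the link $(P_i)_{v_i}$, which is isomorphic to $P\cap\Pi_\delta$ by Remark~\ref{rem:4.5}. The cyclic products of $A_\delta$ are among those of $A_P$, so they are integers as well.

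Because $W_\delta$ is of type $\widetilde A_{d-1}$, its diagram is a $d$-cycle with all labels $3$. Label the facets of $\delta$ as $1,\dots,d$ and set $C=(1,\dots,d)$. Exactly as in the proof of Theorem~\ref{thm:5.5}, we have $a_{i,i+1}a_{i+1,i}=1$ for each $i$, so $C(A_P)\overline C(A_P)=1$. Both factors are integers, so $C(A_P)=\overline C(A_P)=\pm1$, and therefore $R_C(A_P)=0$.

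It remains to show that $R_C(A_P)\neq 0$ for every $[P]\in\Ccal(\Gcal)$. Consider $\mathrm{Cut}_\delta([P])=([P_1^{\dagger v_1}],[P_2^{\dagger v_2}])$. By Remark~\ref{rem:4.5}, the vertex $v_1$ of $P_1$ is truncatable. Lemma~\ref{lem:4.3} says $\Gcal_1$ is irreducible, large and $2$-perfect because $\delta$ is essential. Its vertex $v_1$ is simple, since the link is the $(d-1)$-simplex $\Delta_\delta$ dually, and it is of type $\widetilde A_{d-1}$. Corollary~\ref{cor:3.7}, third bullet, then gives $R_{C_{v_1}}(A_{P_1})\neq0$. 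Here $C_{v_1}$ is the relevant circuit of the link, namely $C$ up to orientation. The Cartan matrix $A_{P_1}$ is the restriction of $A_P$ to $S_1$, because $P_1$ is cut out by the same $\alpha_s$ and the same reflections. Hence $R_C(A_P)=\pm R_{C_{v_1}}(A_{P_1})\neq0$, which contradicts the previous paragraph.

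The main obstacle is the last step: checking that Corollary~\ref{cor:3.7} legitimately applies to $P_1$ as a realization of $\Gcal_1$. One has to verify that $\Gcal_1$ meets the hypotheses of the corollary, in particular simplicity, which may require a local version of the corollary at the single vertex $v_1$. One also has to identify the reflections of $P_1$ with those of $P$ for $s\in S_1$. If the corollary cannot be quoted directly, I would argue directly instead. If $R_C=0$, then $A_\delta$ is equivalent to the standard zero-type $\widetilde A_{d-1}$ Cartan matrix, which has rank $d-1$. So the poles $(b_s)_{s\in\delta}$ span only a codimension-two subspace, contradicting Remark~\ref{rem:4.5}, which says $\Pi_\delta$ is a hyperplane.
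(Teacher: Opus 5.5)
Your main argument is correct and is essentially the paper's proof: integrality forces $C(A_P)=\overline{C}(A_P)=\pm1$, so $R_C(A_P)=0$, while truncatability of the $\widetilde A_{d-1}$ vertex $v_1$ of $P_1$ forces this normalized cyclic product to be nonzero by Corollary~\ref{cor:3.7}/\ref{cor:3.8}; you identify it with $R_{C_{v_1}}(A_{P_1})$ by restricting $A_P$ to $S_1$, where the paper instead appeals to Lemma~\ref{lem:4.6}. One correction to your fallback argument: $\operatorname{rank}A_\delta=d-1$ only says that the image of the span of the poles in $\R^{d+1}/\langle v_1\rangle$ has dimension $d-1$, and in the model parabolic case (a hyperbolic cusp) the poles in fact span a hyperplane through $v_1$, so the contradiction has to come from the single-point intersection condition in Remark~\ref{rem:4.5}, not from $\Pi_\delta$ being a hyperplane.
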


\begin{proof}
Suppose for contradiction that there exists an integral Vinberg representation $[P]\in \Ccal(\Gcal)$, and write $\mathrm{Cut}_\delta([P])=([P_1],[P_2]),$ with $[P_i]\in \Ccal(\Gcal_i)^{\dagger v_i}$.

Since $W_{\Gcal_1}$ is generated by a subset of the standard generators of $W_\Gcal$, the restriction of the integral Vinberg representation associated to $[P]$ to $W_{\Gcal_1}$ is again integral. By Lemma~\ref{lem:4.6}, the normalized cyclic product $R_{\delta}([P])$ is identified with
\begin{equation*}
R_{\delta_1}([P_1])=R_{\delta_2}([P_2]),
\end{equation*}
where $\delta_i$ denotes the set of facets of $\Gcal_i$ incident to $v_i$.

Since $W_\delta$ is of type $\widetilde{A}_{d-1}$, so is $W_{\delta_1}$. Thus $v_1$ is of type $\widetilde{A}_{d-1}$, and exactly as in the proof of Theorem~\ref{thm:5.5}, integrality implies $R_{\delta_1}([P_1])=0$. On the other hand, Corollary~\ref{cor:3.8} gives $R_{\delta_1}([P_1])\neq 0$, a contradiction.
\end{proof}

Let $\Gcal$ be a labeled polytope. We say that $\Gcal$ is \emph{hyperbolizable} if there exists a hyperbolic Coxeter polytope realizing $\Gcal$. By Remark~\ref{rem:4.9}, the existence of a Coxeter subgroup of type $\widetilde{A}_{d-1}$ is related to the hyperbolizability of $\Gcal$. We therefore obtain the following corollary.

\begin{corollary}\label{cor:5.7}
Let $\Gcal$ be an irreducible, large, $2$-perfect labeled truncation polytope of dimension $d \ge 3$, and let $\Ccal(\Gcal)$ be its deformation space. If $\Gcal$ is not hyperbolizable, then $\Ccal(\Gcal)$ admits no integral Vinberg representation.
\end{corollary}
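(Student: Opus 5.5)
The plan is to combine Theorem~\ref{thm:4.8} (together with Remark~\ref{rem:4.9}) with the nonexistence result Theorem~\ref{thm:5.6}. Let $\Gcal$ be an irreducible, large, $2$-perfect labeled truncation polytope of dimension $d\ge3$ that is not hyperbolizable. If $\Ccal(\Gcal)=\varnothing$, there is nothing to prove, since an empty deformation space contains no integral Vinberg representation. So I assume from now on that $\Ccal(\Gcal)\neq\varnothing$.

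The first step uses the third item of Theorem~\ref{thm:4.8}, which says that $\Gcal$ is hyperbolizable if and only if $\Ccal(\Gcal)$ is connected. Since $\Gcal$ is not hyperbolizable, $\Ccal(\Gcal)$ is nonempty and disconnected. Remark~\ref{rem:4.9} then shows that the obstruction to connectedness is an essential prismatic circuit: there is an essential prismatic circuit $\delta$ of $\Gcal$ with $W_\delta$ of type $\widetilde A_{d-1}$.

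The second step is to apply Theorem~\ref{thm:5.6} to this $\delta$. Its hypotheses are exactly irreducible, large, $2$-perfect, $d\ge3$, and an essential prismatic circuit of type $\widetilde A_{d-1}$, all of which we now have. It follows that $\Ccal(\Gcal)$ admits no integral Vinberg representation.

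The only delicate point is the first step, namely extracting the circuit from non-hyperbolizability. Remark~\ref{rem:4.9} is stated as an equivalence only under the hypothesis that $\Ccal(\Gcal)$ is nonempty. This is why the empty case has to be handled separately at the outset, and after that the two cited results chain together directly.
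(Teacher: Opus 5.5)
Your proposal is correct and follows the paper's proof: non-hyperbolizability gives disconnectedness via Theorem~\ref{thm:4.8}, Remark~\ref{rem:4.9} then supplies an essential prismatic circuit of type $\widetilde A_{d-1}$, and Theorem~\ref{thm:5.6} finishes. Your separate treatment of the case $\Ccal(\Gcal)=\varnothing$ correctly makes explicit a point the paper's one-line proof leaves implicit.
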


\begin{proof}
This follows immediately from Remark~\ref{rem:4.9} and Theorem~\ref{thm:5.6}.
\end{proof}

\subsection{The truncation case}

To prove the main theorem, we begin by controlling the bending parameter associated with each essential prismatic circuit. More precisely, when restricted to the integral Vinberg representations in $\Ccal(\Gcal)$, the bending parameter remains bounded on each fiber of $\mathrm{Cut}_\delta$.

\begin{lemma}\label{lem:5.8}
Let $\Gcal$ be an irreducible, large, $2$-perfect labeled polytope of dimension $d \ge 3$, and let $\delta$ be an essential prismatic circuit of $\Gcal$. Suppose that $\Gcal$ splits along $\delta$ into $\Gcal_1^{\dagger v_1}$ and $\Gcal_2^{\dagger v_2}$. Let $\Ccal(\Gcal)(\Z)\subset \Ccal(\Gcal)$ denote the subset of integral Vinberg representations. Then, for each fiber of $\mathrm{Cut}_\delta$, the set of bending parameters corresponding to points of $\Ccal(\Gcal)(\Z)$ is finite.
\end{lemma}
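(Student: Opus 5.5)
Fix a fiber $F=\mathrm{Cut}_\delta^{-1}\bigl([P_1^{\dagger v_1}],[P_2^{\dagger v_2}]\bigr)$. By Lemma~\ref{lem:4.6}, $F$ is a single $\Psi$--orbit, $F=\{\Psi_u([P])\mid u\in\R\}$ for a base point $[P]$, and $u$ is the bending parameter. In the proof of Lemma~\ref{lem:4.6} we chose a relevant circuit $C=(s_\ell,s_1,\dots,s_j,s_r)$ through the cut, with $s_\ell\in S_1\setminus\delta$ and $s_r\in S_2\setminus\delta$. We computed
\begin{equation*}
C(A^u)=K_1\bigl(x_1+e^{(d+1)u}y_1\bigr),\qquad \overline{C}(A^u)=K_2\bigl(x_2+e^{-(d+1)u}y_2\bigr),
\end{equation*}
where $K_1,K_2,x_1,x_2,y_1,y_2$ depend only on the base point of the fiber, and $x_i,y_i>0$, $K_1K_2>0$. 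The plan is to show that integrality confines $u$ to a finite set using these two cyclic products alone.

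The key identity comes from multiplying the two products. Then
\begin{equation*}
C(A^u)\,\overline{C}(A^u)=K_1K_2\bigl(x_1x_2+y_1y_2+x_1y_2e^{-(d+1)u}+x_2y_1e^{(d+1)u}\bigr).
\end{equation*}
There are two ways to use this.

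\textbf{First attempt.} Pair consecutive entries, as in Theorem~\ref{thm:5.1}. Whenever an edge of $C$ has a finite label $m$, the product $a_{st}a_{ts}=4\cos^2(\pi/m)$ is fixed. If every edge of $C$ had finite label, $C\overline{C}$ would be constant along the fiber. Since it is not constant, some edge, presumably $(s_r,s_\ell)$, has label $\infty$ in general. So this direct route fails, and I will instead use integrality of each factor separately.

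\textbf{Main route.} Suppose $\Psi_u([P])\in\Ccal(\Gcal)(\Z)$. Lemma~\ref{lem:2.17} applies because $\Gcal$ is irreducible and large, so both $C(A^u)$ and $\overline{C}(A^u)$ are integers. Moreover, $|C(A^u)|=|K_1|(x_1+e^{(d+1)u}y_1)$ is a strictly increasing function of $u$, and $|\overline{C}(A^u)|$ is strictly decreasing.

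This alone does not give finiteness: a monotone function takes integer values at infinitely many $u$ if its range is unbounded. So I need one more relation among cyclic products that does not depend on $u$ and that bounds one factor in terms of the other.

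For that relation I would use a second integral quantity, namely the cyclic product of a circuit meeting the cut through two different edges. The cleanest choice is the $2$-circuit $(s_\ell,s_r)$ when the facets $s_\ell,s_r$ are adjacent with label $\infty$.

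The main obstacle is to write down the right $u$-invariant combination of integral cyclic products. A finite set of pairs requires exactly such an identity, $C\cdot\overline{C}=$ something bounded, and producing it needs care because the polynomial in $e^{\pm(d+1)u}$ above is not constant. I would first try to prove that $x_1y_2$ or $x_2y_1$ vanishes for a suitable choice of $s_\ell,s_r$, that is, choosing them with non-adjacent facets so that the mixed term disappears. Failing that, I would argue via discreteness: $\rho_u(W_\Gcal)\subset \SL^\pm(d+1,\Z)$ up to conjugation, together with the fixed traces of the two pieces, forces the characters $e^{(d+1)u}$ to be algebraic integers, and units, in a fixed number field of bounded degree. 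Boundedness of their Galois conjugates from the fixed data would then give finiteness.
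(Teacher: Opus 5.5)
\textbf{There is a genuine gap: the proposal never completes the proof.}

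You set up exactly the right data. At an integral point of the fiber, both $C(A^u)=K_1(x_1+e^{(d+1)u}y_1)$ and $\overline{C}(A^u)=K_2(x_2+e^{-(d+1)u}y_2)$ are integers. Also, $|C(A^u)|$ is strictly increasing and $|\overline{C}(A^u)|$ is strictly decreasing. But you then declare this insufficient and look for a $u$-invariant identity bounding $C\cdot\overline{C}$, which you never produce. So the lemma is not proved as written.

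Neither fallback plan rescues it. Making $x_1y_2$ or $x_2y_1$ vanish is impossible, because $x_1,x_2,y_1,y_2>0$ for every admissible choice of $s_\ell,s_r$ in the adapted basis, as you yourself recorded. The Galois-conjugate argument rests on unproved assertions: that $e^{(d+1)u}$ is an algebraic unit of bounded degree, and that its conjugates are controlled. You give no mechanism for either.

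The missing idea is elementary: each of the two products has a finite limit at one end of $\R$, approached strictly from one side.
\begin{itemize}
\item As $u\to-\infty$, $C(A^u)\to K_1x_1$, and $C(A^u)-K_1x_1=K_1y_1e^{(d+1)u}$ has the constant sign of $K_1$. So for $u$ sufficiently negative, $C(A^u)$ lies in a short open interval with endpoint $K_1x_1$ that contains no integer. Hence no integral point has $u\le u_-$.
\item Symmetrically, $\overline{C}(A^u)\to K_2x_2$ from one side as $u\to+\infty$, so no integral point has $u\ge u_+$.
\end{itemize}
Thus the integral bending parameters lie in the compact interval $[u_-,u_+]$. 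There, the continuous, strictly monotone function $u\mapsto C(A^u)$ has bounded image and takes each integer value at most once, which gives finiteness.

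This is precisely the paper's proof. The unboundedness you worried about occurs for $C(A^u)$ only as $u\to+\infty$, and on that end $\overline{C}(A^u)$ excludes integrality. No relation between the two products is needed.
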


\begin{proof}
Fix a fiber
\begin{equation*}
F=\mathrm{Cut}_\delta^{-1}([P_1],[P_2])
\end{equation*}
and a point $[P]\in F$. Using the bending parametrization of $F$, we write each point of $F$ as $\Psi_u([P])$ for some $u\in\R$. For such $u$, set
\begin{equation*}
N(u):=K_1(x_1+e^{(d+1)u}y_1),
\qquad
D(u):=K_2(x_2+e^{-(d+1)u}y_2).
\end{equation*}
If $\Psi_u([P])\in \Ccal(\Gcal)(\Z)$, then $N(u)$ and $D(u)$ are cyclic products of $\Psi_u([P])$. Hence, by Lemma~\ref{lem:2.17}, $N(u),D(u)\in\Z$.

As $u\to -\infty$, we have $N(u)\to K_1x_1$. Since $y_1>0$, the quantity $N(u)-K_1x_1$ always has the same sign as $K_1$. Therefore there exist $\varepsilon_->0$ and $u_-\in\R$ such that the open interval $I_-$ with endpoints $K_1x_1$ and $K_1x_1+\operatorname{sgn}(K_1)\varepsilon_-$
is disjoint from $\Z$, and $N(u)\in I_-$
for all $u\le u_-$. It follows that
\begin{equation*}
\Psi_u([P])\notin \Ccal(\Gcal)(\Z)
\qquad\text{for all }u\le u_-.
\end{equation*}

Similarly, as $u\to +\infty$, we have $D(u)\to K_2x_2$. Since $y_2>0$, the quantity $D(u)-K_2x_2$ always has the same sign as $K_2$. Hence there exist $\varepsilon_+>0$ and $u_+\in\R$ such that the open interval $I_+$ with endpoints $K_2x_2$ and
$K_2x_2+\operatorname{sgn}(K_2)\varepsilon_+$
is disjoint from $\Z$, and $D(u)\in I_+$
for all $u\ge u_+$. Therefore
\begin{equation*}
\Psi_u([P])\notin \Ccal(\Gcal)(\Z)
\qquad\text{for all }u\ge u_+.
\end{equation*}

Thus $\{u\in\R:\Psi_u([P])\in \Ccal(\Gcal)(\Z)\}\subset [u_-,u_+]$, so the bending parameter is bounded on the fiber. Moreover, since $y_1,y_2>0$, the functions $N(u)$ and $D(u)$ are continuous and strictly monotone in $u$. Hence the set $\{u\in\R:\Psi_u([P])\in \Ccal(\Gcal)(\Z)\}$ is discrete. Since it is contained in the compact interval $[u_-,u_+]$, it is finite.
\end{proof}

Before proving the main theorem, we establish a slightly more general finiteness statement, which also includes noncompact and infinite-volume cases.

\begin{theorem}\label{thm:5.9}
Let $\Gcal$ be an irreducible, large, $2$-perfect truncation polytope of dimension $d \ge 3$. Let $\Ccal(\Gcal)(\Z)\subset \Ccal(\Gcal)$ denote the subset of integral Vinberg representations. Then $\Ccal(\Gcal)(\Z)$ is a finite set.
\end{theorem}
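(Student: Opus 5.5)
We argue by induction on the number of facets of $\Gcal$, or equivalently on the number of essential prismatic circuits along which $\Gcal$ can be decomposed. By the discussion following Theorem~\ref{thm:4.8}, an irreducible, large, $2$-perfect truncation polytope decomposes along essential prismatic circuits into once truncated simplices. The base case is therefore the case where $\Gcal$ has no essential prismatic circuit. Then $\Gcal=\Scal^{\dagger\Vcal}$ for an irreducible, large, $2$-perfect simplex $\Scal$. By Corollary~\ref{cor:3.8} we identify $\Ccal(\Gcal)$ with $\Ccal(\Scal)^{\dagger\Vcal}$; under this identification the integral points correspond, since $W_\Scal$ is a standard subgroup and cyclic products are preserved. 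Theorem~\ref{thm:5.4} then gives finiteness, and Theorem~\ref{thm:5.5} gives emptiness when an $\widetilde A_{d-1}$ vertex is truncated.

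For the inductive step, fix an essential prismatic circuit $\delta$ and split $\Gcal$ into $\Gcal_1^{\dagger v_1}$ and $\Gcal_2^{\dagger v_2}$. By Lemma~\ref{lem:4.3}, both pieces are irreducible, large and $2$-perfect, and each has fewer facets than $\Gcal$. If $W_\delta$ is of type $\widetilde A_{d-1}$, Theorem~\ref{thm:5.6} shows $\Ccal(\Gcal)(\Z)=\varnothing$ and we are done, so assume $W_\delta$ is Lann\'er. Take an integral $[P]$ and write $\mathrm{Cut}_\delta([P])=([P_1],[P_2])$. Following the convention of the nonexistence subsection, regard $[P_i]$ as a point of $\Ccal(\Gcal_i)^{\dagger v_i}$. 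Since $W_{\Gcal_i}$ is a standard subgroup of $W_\Gcal$, every cyclic product of $A_{P_i}$ is a cyclic product of $A_P$ and hence lies in $\Z$. By Lemma~\ref{lem:2.17} applied to the large piece, the representation of $P_i$, and hence of $P_i^{\dagger v_i}$, is integral. The truncated pieces are again truncation polytopes, so the induction hypothesis shows that $\mathrm{Cut}_\delta(\Ccal(\Gcal)(\Z))$ is contained in a finite set.

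It remains to show that each fiber of $\mathrm{Cut}_\delta$ contains only finitely many integral points. By Lemma~\ref{lem:4.6}, each fiber is a single $\Psi$-orbit parametrized simply transitively by $u\in\R$. Lemma~\ref{lem:5.8} shows that the set of $u$ with $\Psi_u([P])$ integral is finite. Combining the two finiteness statements, $\Ccal(\Gcal)(\Z)$ is a finite union of finite sets.

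The step I expect to be the main obstacle is making the induction hypothesis apply to the correct objects. The integrality information lives on the untruncated $\Gcal_i$, whose Coxeter groups are standard subgroups, while the inductive statement is phrased for the truncation polytope $\Gcal_i^{\dagger v_i}$, whose Coxeter group contains the new right-angled facet. The first thing to try is to prove the theorem in the stronger form ``$\Ccal(\Gcal)^{\dagger\Vcal}$ contains finitely many integral points for any truncatable set $\Vcal$,'' with integrality measured in $W_\Gcal$. This form is stable under splitting: the new facet only contributes zero cyclic products or products fixed by the polars in $\Pi_\delta$. With that formulation, Corollary~\ref{cor:3.8} gives the bijection needed to pass between the two descriptions.
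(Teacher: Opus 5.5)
Your skeleton matches the paper's proof. The base case comes from Theorems~\ref{thm:5.1} and~\ref{thm:5.4}. Theorem~\ref{thm:5.6} forces $W_\delta$ to be Lann\'er. Restriction to standard subgroups puts $\mathrm{Cut}_\delta(\Ccal(\Gcal)(\Z))$ inside a finite set. Lemmas~\ref{lem:4.6} and~\ref{lem:5.8} handle each fiber.

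However, the inductive step as written has a real gap, and it is the one you half-suspect. You apply the induction hypothesis to the truncated pieces $\Gcal_i^{\dagger v_i}$ and assert that integrality of $P_i$ ``hence'' gives integrality of $P_i^{\dagger v_i}$. That does not follow:
\begin{itemize}
\item $W_{\Gcal_i^{\dagger v_i}}$ contains the reflection $\sigma_t$ in the new facet $t=\S(\Pi_\delta)$, and its polar must lie on the line $\langle v_i\rangle$.
\item $t$ is orthogonal to the facets of $\delta$ but non-adjacent to every other facet of $\Gcal_i$. The entries $\alpha_t(b_s)$, $\alpha_s(b_t)$ for $s\notin\delta$ are in general nonzero; they cannot all vanish, or every $b_s$ would lie in $\Pi_\delta$.
\item Hence cyclic products through $t$ are not cyclic products of $A_P$, and nothing makes them integers.
\end{itemize}
So your justification that the new facet ``only contributes zero cyclic products'' is incorrect. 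There is also a secondary problem. With induction on facets, $\Gcal_i^{\dagger v_i}$ need not be smaller than $\Gcal$: when $\Gcal_{3-i}$ is a simplex, $\Gcal_i^{\dagger v_i}$ has $|S_i|+1=|S|$ facets.

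The repair is simpler than a strengthened statement: apply the induction hypothesis to the untruncated pieces $\Gcal_i$, as the paper does.
\begin{itemize}
\item By Lemma~\ref{lem:4.3}, the $\Gcal_i$ are irreducible, large, $2$-perfect truncation polytopes.
\item Their Coxeter groups are standard subgroups of $W_\Gcal$, so $[P_i]\in\Ccal(\Gcal_i)(\Z)$.
\item Since $v_i$ is Lann\'er, Corollary~\ref{cor:3.8} makes $[P_i]\mapsto[P_i^{\dagger v_i}]$ a homeomorphism $\Ccal(\Gcal_i)\to\Ccal(\Gcal_i^{\dagger v_i})$. So $\mathrm{Cut}_\delta$ may be read as a map into $\Ccal(\Gcal_1)\boxtimes_\delta\Ccal(\Gcal_2)$, and integral points land in the finite set $\Ccal(\Gcal_1)(\Z)\times\Ccal(\Gcal_2)(\Z)$.
\end{itemize}
Your facet count works well once it is applied to $\Gcal_i$: $|S_i|=|S|-|S_{3-i}|+d\le|S|-1$, which is more transparent than the paper's count of essential prismatic circuits.

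Your ``stronger form'' is not actually stronger. Since $\Ccal(\Gcal)^{\dagger\Vcal}\subset\Ccal(\Gcal)$, it follows from the original statement. Once integrality is measured in $W_{\Gcal_i}$, the new facet never enters.

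Likewise, in the base case integrality passes only to the standard subgroup $W_\Scal$. So you get an injection $\Ccal(\Gcal)(\Z)\hookrightarrow\Ccal(\Scal)^{\dagger\Vcal}(\Z)$ rather than a correspondence of integral points. The injection is all you need.
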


\begin{proof}
We prove the theorem by induction on the number $n$ of essential prismatic circuits of $\Gcal$.

If $n=0$, then $\Gcal$ is either a simplex or a once-truncated simplex, since a truncation polytope with no essential prismatic circuit has no further nontrivial splitting. Therefore the conclusion follows from Theorems~\ref{thm:5.1} and~\ref{thm:5.4}.

Assume now that the statement holds for every irreducible, large, $2$-perfect truncation polytope with at most $k$ essential prismatic circuits, and let $\Gcal$ have $k+1$ essential prismatic circuits. Choose an essential prismatic circuit $\delta$, and suppose that splitting along $\delta$ decomposes $\Gcal$ into $\Gcal_1^{\dagger v_1}$ and $\Gcal_2^{\dagger v_2}$.

Assume that $\Ccal(\Gcal)(\Z)\neq\varnothing$. Then Theorem~\ref{thm:5.6} shows that $W_\delta$ is not of type $\widetilde A_{d-1}$. Since Lemma~\ref{lem:4.3} implies that $W_\delta$ is either Lann\'er or of type $\widetilde A_{d-1}$, it follows that $W_\delta$ is Lann\'er. Hence the cut vertices $v_1$ and $v_2$ are of Lann\'er type, and Corollary~\ref{cor:3.8} identifies $\Ccal(\Gcal_1^{\dagger v_1}) \boxtimes_\delta \Ccal(\Gcal_2^{\dagger v_2})$ with $\Ccal(\Gcal_1)\boxtimes_\delta \Ccal(\Gcal_2)$.

We regard the splitting map as $\mathrm{Cut}_\delta:\Ccal(\Gcal)\longrightarrow\Ccal(\Gcal_1)\boxtimes_\delta \Ccal(\Gcal_2)$. Since $\Gcal$ is a truncation polytope, each of the pieces $\Gcal_1$ and $\Gcal_2$ is again a truncation polytope. Moreover, by Lemma~\ref{lem:4.3}, both $\Gcal_1$ and $\Gcal_2$ are irreducible, large, and $2$-perfect. Each of $\Gcal_1$ and $\Gcal_2$ has at most $k$ essential prismatic circuits. Hence, by the induction hypothesis, the sets $\Ccal(\Gcal_1)(\Z)$ and $\Ccal(\Gcal_2)(\Z)$ are finite. Therefore $\Ccal(\Gcal_1)(\Z)\boxtimes_\delta \Ccal(\Gcal_2)(\Z)$ is finite.

Now let $[P]\in \Ccal(\Gcal)(\Z)$. Since $W_{\Gcal_1}$ and $W_{\Gcal_2}$ are standard subgroups of $W_{\Gcal}$, the restrictions of the Vinberg representation of $[P]$ to these subgroups are again integral. Hence 
\begin{equation*}
    \mathrm{Cut}_\delta([P])\in \Ccal(\Gcal_1)(\Z)\boxtimes_\delta \Ccal(\Gcal_2)(\Z).
\end{equation*}
It follows that only finitely many fibers of $\mathrm{Cut}_\delta$ can contain integral Vinberg representations.

Fix one such fiber $F$. By Lemma~\ref{lem:4.6}, the fiber $F$ is an $\R$-orbit for the bending action, so we may write $F=\{\Psi_u([P_0]) : u\in\R\}$ for some $[P_0]\in F$. By Lemma~\ref{lem:5.8}, the set $\{u\in\R : \Psi_u([P_0])\in \Ccal(\Gcal)(\Z)\}$ is finite. Therefore $F\cap \Ccal(\Gcal)(\Z)$ is finite. Since only finitely many fibers contain integral Vinberg representations and each such fiber contains only finitely many of them, the set $\Ccal(\Gcal)(\Z)$ is finite.
\end{proof}

Theorem~\ref{thm:5.9} establishes the desired finiteness statement for irreducible, large, $2$-perfect truncation polytopes. We may now deduce the main theorem.

\begin{proof}[Proof of Theorem~\ref{thm:1.1}]
Let $P$ be a compact hyperbolic truncation polytope, and let $M$ be the associated Coxeter orbifold with labeled fundamental domain $\Gcal$. Set $\Gamma:=\pi_1^{\mathrm{orb}}(M)$. By Remark~\ref{rem:2.11}, the geometric component $\mathscr{G}(\Gamma,G)$ is identified with the component $\Ccal_0(\Gcal)\subset \Ccal(\Gcal)$ containing the hyperbolic point. Since $\Ccal(\Gcal)$ is connected for compact hyperbolic truncation polytopes by Theorem~\ref{thm:4.8}, it follows that $\Ccal_0(\Gcal)=\Ccal(\Gcal)$. Under this identification, integral points of $\mathscr{G}(\Gamma,G)$ correspond precisely to integral Vinberg representations in $\Ccal(\Gcal)$.

By Theorem~\ref{thm:5.9}, the set $\Ccal(\Gcal)(\Z)$ of integral Vinberg representations is finite. Consequently, the geometric component $\mathscr{G}(\Gamma,G)$ contains only finitely many integral points.
\end{proof}

\begin{remark}\label{rem:5.10}
By Lemma~\ref{lem:5.8} and Theorem~\ref{thm:5.9}, whenever $\Gcal$ is a non-truncation polytope admitting a truncatable vertex and $\Ccal(\Gcal)$ contains only finitely many integral Vinberg representations, one can produce infinitely many non-truncation polytopes by truncating such a vertex and gluing the resulting polytope to a truncation polytope. The figure below illustrates a polytope satisfying these assumptions.
\end{remark}

\begin{figure}[H]
  \centering
 \begin{tikzpicture}[line cap=round,line join=round,scale=1]

    \coordinate (A) at (0,0);
    \coordinate (B) at (4,0);
    \coordinate (C) at (4,4);
    \coordinate (D) at (0,4);

    \coordinate (a) at (1.5,1.5);
    \coordinate (b) at (2.5,1.5);
    \coordinate (c) at (2.5,2.5);
    \coordinate (d) at (1.5,2.5);

    \draw[line width=0.8pt] (A)--(B)--(C)--(D)--cycle;
    \draw[line width=0.8pt] (a)--(b)--(c)--(d)--cycle;

    \draw[line width=0.8pt] (D)--(d);
    \draw[line width=0.8pt] (C)--(c);
    \draw[line width=0.8pt] (A)--(a);
    \draw[line width=0.8pt] (B)--(b);

    \node at (2,2) {$F_1$};
    \node at (2,3.25) {$F_2$};
    \node at (0.75,2.25) {$F_3$};
    \node at (2,0.75) {$F_4$};
    \node at (3.25,2.25) {$F_5$};
    \node at (4.75,2.25) {$F_6$};

    \node at (-0.75,2.25) {\phantom{$F_6$}};

    \node[above] at (2,4) {$4$};
    \node[right] at (4,2) {$4$};
    \node[below] at (2,0) {$2$};
    \node[left]  at (0,2) {$2$};

    \node at (0.9,3.2) {$2$};
    \node at (3.1,3.2) {$4$};
    \node at (0.9,0.8) {$4$};
    \node at (3.1,1.0) {$2$};

    \node[above] at (2,2.5) {$2$};
    \node[left]  at (1.5,2) {$2$};
    \node[below] at (2,1.5) {$2$};
    \node[right] at (2.5,2) {$2$};
    \node[above right, xshift=1pt, yshift=1pt] at (C) {$v$};
  \end{tikzpicture}
  \caption{A labeled cube whose deformation space contains only finitely many integral Vinberg representations.}
  \label{fig:cube-schematic}
\end{figure}
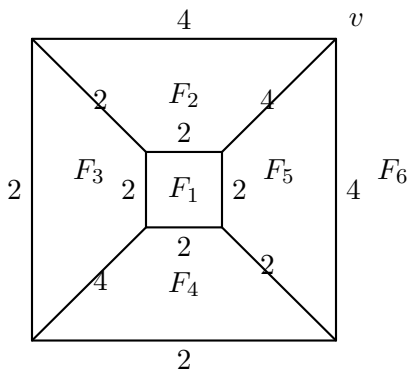

\printbibliography
\end{document}